\def \bbeta{\boldsymbol{\beta}}
\def \bX{\mathbf{X}}
\def \bbX{\mathbb{X}}
\def \bW{\mathbf{W}}
\def \bbW{\mathbb{W}}
\def \tlambda{\tilde{\lambda}}
\def \tV{\tilde{V}}
\def \be{\begin{equs}}
	\def \ee{\end{equs}}
\def \E{\mathbb{E}}
\def \P{\mathbb{P}}
\def \bmu{\mathbf{\mu}}
\def \R{\mathbb{R}}
\def \btheta {\boldsymbol{\theta}}
\def \bmu {\boldsymbol{\mu}}
\newtheorem{assn}[thm]{Assumption}
\def \into {\mathrm{in}}
\def \out {\mathrm{out}}
\def \ind {\rm ind}
\begin{document}

\title[PC regression]{PC adjusted testing for low-dimensional parameters}

\author[S. Bhattacharya]{Sohom Bhattacharya}
\address{S. Bhattacharya\hfill\break
	Department of Statistics\\ University of Florida\\ Gainesville, FL, USA.}
\email{bhattacharya.s@ufl.edu}
\author[R. Dey]{Rounak Dey}
\address{R. \ Dey\hfill\break
	Senior Data Scientist, Insitro Inc., San Francisco, CA, USA.}
\email{rdey@insitro.com}
\author[R. Mukherjee]{Rajarshi Mukherjee}
\address{R. \ Mukherjee\hfill\break
	Departments of Biostatistics\\ Harvard T.H. Chan School of Public Health\\ Boston, MA, USA.}
\email{ram521@mail.harvard.edu}
\thanks{\texttt{R. Mukherjee was supported by NSF CAREER 8529216-01.}}

\begin{abstract} 
	In this paper, we investigate the impact of high-dimensional Principal Component (PC) adjustments on inferring the effects of variables on outcomes, with a focus on applications in genetic association studies where PC adjustment is commonly used to account for population stratification. We consider high-dimensional linear regression in the regime where the number of covariates grows proportionally to the number of samples. In this setting, we provide an asymptotically precise understanding of when PC adjustments yield valid tests with controlled Type I error rates. Our results demonstrate that, under both fixed and diverging signal strengths, PC regression often fails to control the Type I error at the desired nominal level. Furthermore, we establish necessary and sufficient conditions for Type I error inflation based on covariate distributions. These theoretical findings are further supported by a series of numerical experiments.

\end{abstract}

\maketitle




\section{Introduction} 
Testing statistical hypotheses, where one needs to take special care of complications of high dimensional data structures is a staple in modern data science. Indeed, a quintessential example of this  naturally arise in the context of large genetic association studies. In the context of genetic association studies, such adjustments are motivated by accounting for hidden confounding due to population stratification. In this regard, the seminal paper of \cite{price2006principal} has made a strong case for Principle Component (PC) adjustments to account for population substructure based confounding in genetic association studies. The success of this method have thereafter provided impetus for other applied researchers to appeal to a similar philosophy. Specifically, as another specific example, \cite{barfield2014accounting} explores the applicability of the same principle in epigenome wide association studies. A major appeal of this method, that we will try lend a theoretical lens to in this paper, is availability of the EIGENSTRAT package of \cite{price2006principal} that allows fast implementation of this principle and has therefore been immensely popular as a standard adjustment procedure to account for confounding bias in such studies.

\textcolor{black}{In spite of the immense practical popularity of PC adjustment for high dimensional nuisance parameters,  many basic theoretical questions remain unanswered for this high popular and regularly used principle. For example, the most traditional statistical questions that arise as a part of planning a real data analysis pertains to sample size and power calculations. Interestingly, the answers to even such simple questions seems to have remain unexplored. Indeed, this proposed two stage method involving a principal component adjustment before testing for  associations in large scale studies, implies interesting statistical phenomenon in high dimensional problems where the number of genetic variants under study is large ~\cite{patterson2006population}
}. 

In contrast to such implied caveats and potential inconsistencies in high dimensional PCA ~\cite{johnstone2009consistency}, empirical success of the procedure in practical problems in turn go far towards making a strong case for the PC based adjustments in these applications ~\cite{yang2014principal}. This paper is aimed at providing first steps towards a theoretical formalism and reconciling these issues and thereby potentially resolve questions regarding when and why such PC-based adjustments are reliable and mathematically valid. 

In this study, we examine the behavior of PC adjustment-based testing procedures across various regimes of population PC signal strengths. Specifically, we first show that, without additional assumptions, this method fails to control Type I error under fixed signal strength regimes. While this phenomenon has been empirically observed before ~\cite[Table 1]{zhang2015principal}, we aim to reconcile the method’s practical success with its Type I error issues by analyzing regimes of diverging population PC strengths. In doing so, we establish the necessary and sufficient conditions on population PC strengths required for Type I error control. To the best of our knowledge, this work offers the first theoretical explanation for this phenomenon in the setting where the number of covariates grows proportionally to the number of samples. Importantly, the goal of this paper is not to introduce new methods for handling population stratification or high-dimensional nuisance adjustment. Instead, by providing necessary and sufficient conditions for the effectiveness of a widely used method, we offer guiding principles for practitioners on the assumptions required when relying on PC-based adjustments for downstream inference in high-dimensional settings.

\section{The Method and Models} In this section we introduce the necessary mathematical formalism to describe the method ~\cite{price2006principal,patterson2006population} we aim to rigorously study and motivate models and assumptions to study its theoretical behavior. We first describe the data structure followed by the testing procedure espoused in ~\cite{price2006principal,patterson2006population}. Subsequently, we discuss a population analog of the method to develop intuition on the models and assumptions under which we will operate.

We consider data on $(Y_i, \mathbf{X}_i)_{i=1}^n\stackrel{\rm i.i.d.}{\sim}\P$ where $Y_i\in \mathbb{R}$ denotes an outcome of interest and $\mathbf{X}_i\in \mathbb{R}^p$ collects predictor variables under study. In many case, it is of interest to test the effect of specific variables/ components in $\mathbf{X}$ on $Y$ while conditioning on other variables in $\mathbf{X}$. More precisely, we will partition  $\bX$ as $\bX=[A, \mathbf{W}]^T$ where $A\in \mathbb{R}$ is the variable whose effect on $Y$ is of interest and $\mathbf{W}$ collects confounders one wishes to adjust while decoding this relationship.  Our main question of interest concerns a specific way of inferring the effect of $A$ on $Y$ -- namely PC adjustment for the confounding variables in $\bX$.   Specifically, we shall study the statistical properties (Type I and Type II error) of testing for a specific component $A$ of $\bX$ on $Y$. We introduce the method following traditional GWAS analysis where it is a standard practice to run PCA on the whole data matrix of $\bX$ (instead on the matrix obtained by removing $A$-observations from it) and include them in the analyses  -- instead of running a new PCA every time for testing for a new variable in $\bX$. Here we will explore both the cases when the PC is run on the whole of $\bX$ (a case more relevant for GWAS type analyses ~\cite{price2006principal}) and for the case when PC adjustments are performed on $\bW$ (a case relevant in certain epigenetic studies where $A$ contains DNA methylation levels of a particular cytosine-phosphate-guanine 
(CpG) site of interest and one performs PCA on genetic variants $\bW$ to account for population stratification \cite{barfield2014accounting}).

To initiate a theoretical study of this method, we begin by setting up the necessary mathematical notation to describe the procedure. As discussed above, one standard approach in practice is to perform PC analysis on the data matrix of $\bX$ and only use the top principal component directions while adjusting for confounders in the regression of $Y$ on $A$. Mathematically, this can be written as follows through the data vectors $\mathbf{Y}=(Y_1,\ldots,Y_n)^T$, $\mathbf{A}=(A_1,\ldots,A_n)^T$ and the data matrix of covariates $\mathbb{X}=[\mathbf{X}_1:\cdots:\mathbf{X}_n]^T$:
\begin{enumerate}[wide, labelwidth=!, labelindent=0pt]
	\item [(i)] Given $k\in \mathbb{N}$, let $\widetilde{\mathbf{V}}_{k,\into}$ denote the matrix composed of the first $k$ right singular vectors of $\mathbb{X}$.
	
	\item [(ii)] Perform a linear regression of $\mathbf{Y}$ on $(\mathbf{A},\mathbb{X}\widetilde{\mathbf{V}}_{k,\into})$ to get a score or level of significance for $\delta$.
\end{enumerate}
Here the subscript $\into$ refers to the fact that $A$ is a part of the variables \textit{in} $\bX$ on which the PC was performed. For any cut-off $t$, we consider the test that rejects for value of the likelihood ratio larger than $t$:
\be
\varphi_{k,\into}(t)&:=\mathbf{1}\left(\mathrm{LR}_{k,\into}>t\right),\quad \text{where}\\
\mathrm{LR}_{k,\into}&=\mathbf{Y}^TP_{\mathcal{C}({\mathcal{A}_{k, \into}})}\mathbf{Y},\quad {\mathcal{A}_{k, \into}}=(I-P_{\mathcal{C}(\mathbb{X}{\widetilde{\mathbf{V}}_{k,\into}})})\mathbf{A}.\label{eqn:lr_in}
\ee
Here for any matrix $M$ we use $\mathcal{C}(M)$ to denote its column space and $P_{\mathcal{C}(M)}$ to denote the projection matrix onto $\mathcal{C}(M)$. An analogous test can be constructed where the PC adjustment was performed only on $\mathbb{W}:=[\bW_1:\cdots:\bW_n]^T$ -- which we shall refer to as
\be
\varphi_{k,\out}(t)&:=\mathbf{1}\left(\mathrm{LR}_{k,\out}>t\right),  \hspace{0.25em} \text{where }
\mathrm{LR}_{k,\out}=\mathbf{Y}^TP_{\mathcal{C}({\mathcal{A}_{k, \out}})}\mathbf{Y},\quad
 {\mathcal{A}_{k,\out}}=(I-P_{\mathcal{C}(\mathbb{W}\widetilde{\mathbf{V}}_{k,\out})})\mathbf{A}\label{eqn:lr_out}
\ee
and $\widetilde{\mathbf{V}}_{k,\out}$ now collects the top right singular vectors of $\mathbb{W}$.

In order to study the properties of test statistics $\mathrm{LR}_{k,\ind}$ for $\ind \in \{0,1\}$ , we now delineate data generating models which serve as a template for the population analog of the method described above. To this end, the test based on $\mathrm{LR}_{k,\out}$ operates through a regression of $Y_i$'s on $A_i$'s and $\widetilde{\mathbf{V}}^\top_{k,out}\bW_i$. However the $j^{\rm th}$-element of $\widetilde{\mathbf{V}}^\top_{k,out}\bW_i$ is exactly $\bW_i^T\hat{v}_j$, the $j^{\rm th}$ sample PC score, where $\hat{v}_j$ is the $j^{\rm th}$-eigenvector of $\frac{1}{n}\sum_{i=1}^n \bW_i\bW_i^T$. Therefore, a natural population counterpart of the problem motivates a regression of $Y$ on $A$ and top $k$ population PC scores given by $\bW^Tv_1,\ldots,\bW^Tv_k$ with $v_j$ being the $j^{\rm th}$-eigenvector of $\mathbb{E}(\bW\bW^T)$. However, mathematically this implies positing a linear model: $\mathbb{E}(Y|A,\bW)=A\delta+\sum_{j=1}^k \alpha_j\bW^Tv_j=A\delta+\bW^T\bbeta$ with $\bbeta\in \rm{Span}(v_1,\ldots,v_k)$. This motivates us to consider the following linear model:
\be
Y_i=A_i\delta+\bW_i^T\bbeta+\epsilon_i, \quad \epsilon_i\sim N(0,\sigma^2_{\varepsilon}),\label{eqn:model_gaussian_linear_outcome_regression}
\ee
and thereby consider the hypothesis testing problem for $\delta$ defined as 
\be
H_0: \delta=0 \quad vs \quad H_1(h):\delta=\frac{h}{\sqrt{n}}.\label{eqn:main_hypo}
\ee 

The choice of the alternative converging to $0$ at $1/\sqrt{n}$ rate is driven by the fact that the detection threshold for testing $\delta$ scales accordingly in the asymptotic regimes we will work with in this paper. Hence, with some abuse of notation, we will denote the distribution $\P$ of $Y|A,\bW$ in model \eqref{eqn:model_gaussian_linear_outcome_regression} as $\P_{\delta}$, highlighting the central focus of the hypothesis test, $\delta$. Finally, throughout we assume that $\|\bbeta\|$ is bounded away from $0$ to reflect a scenario where the effect of $A$ on $Y$ experiences non-trivial confounding through $\bW$.

The standard test for \eqref{eqn:main_hypo} is the Generalized Likelihood Ratio Test (GLRT)  -- which for known $\sigma_y^2$\footnote{We will throughout assume this and show through numerical experiments that the results for unknown $\sigma^2_{\varepsilon}$ should be qualitatively similar.} reduces to the chi-square test with $1$-degree of freedom. Although the same test has non-trivial local power for testing \eqref{eqn:main_hypo} even for growing dimension $p$: (1), it is often believed that suitable dimension reduction techniques on the regression of $\bX$ on $Y$ by relying on the intuition that dimension reduction techniques capture underlying structures of the problem (e.g. sparsity, low rank etc) and thereby yields more power for the testing problem compared to vanilla GLRT; and (2)  performing GLRT at the scale of modern high dimensional problems such as GWAS is computationally expensive (${O}(\min\{p^3,np^2,n^2p\})$) and performing dimension reduction on $\bX$ substantially reduces this burden. The method described above in \eqref{eqn:lr_in} and \eqref{eqn:lr_out} through $\mathrm{LR}_{k,\ind},\varphi_{k,\ind}$  was proposed to address such issues while accounting for confounding in genetic association studies, is  similar to Principal Component Regression (PCR), and raises a few  immediate questions:
    \begin{enumerate}
        \item [(i)] What is the behavior of $\varphi_{k,\mathrm{\mathrm{ind}}}(t)$, $\ind \in \{\into,\out\}$ under $H_0$ and does there exist a 
        cut-off $t_{\alpha}$ such that the Type I error of $\varphi_{k,\mathrm{\ind}}(t_{\alpha})$ converges to $\alpha$?
        
        \item [(ii)] How to characterize the power function of $\varphi_{k,\mathrm{ind}}(t)$, $\ind \in \{\into,\out\}$ for any given $(k,t)$?
        

    \end{enumerate}



\textbf{Our Contribution:} We characterize the behavior of generalized LRT for PCR in the regime $\frac{p}{n} \rightarrow \gamma>0$, i.e., number of covariates $p$ grows proportionally with sample size $n$, can potentially be larger than sample size. In this regard, the main results of this paper can be summarized as follows:
\begin{enumerate}[wide]
	\item [(i)] Under a generalized spiked model for $\bW$ (see Definition \ref{defn:generalized_spike_model}) we provide exact asymptotic behavior for testing \eqref{eqn:main_hypo} after PC adjustments. The precise description of the testing errors can be elaborated as follows.
	
	\begin{enumerate}
	    \item [(a)] There is an important distinction between fixed effects and random effects regimes on $\bbeta$. In particular, when $\bbeta$ in \eqref{eqn:main_hypo} is considered fixed and unknown the Type I error of the standard test for \eqref{eqn:main_hypo} after PC adjustments converges to $1$. In contrast, when $\bbeta$ is a random effect the Type I error of the same test is bounded away from both $1$ and the nominal desired level. (See Theorem \ref{thm:spiked_model_fixed_strength_out}\ref{thm:both_fixed_fixed_strength_out_reg}-\ref{thm:one_fixed_fixed_strength_out_reg}).
	    
	    \item [(b)] For random $\bbeta$, we derive the power function of the test for \eqref{eqn:main_hypo} after PC adjustments and show that without further assumptions it might not be possible to estimate the power function exactly to either correct for the Type I error inflation or perform sample size/power calculations (See Theorem \ref{thm:spiked_model_fixed_strength_out}\ref{thm:one_fixed_fixed_strength_out_reg}).
	    
	\end{enumerate} 
	
\item [(ii)] Moving beyond the spiked model, we show that when $\bW$ is derived from a mixture of mean-shift type distributions, a similar phenomenon (Type-I error inflation) persists in terms of the behavior of the test for \eqref{eqn:main_hypo} after PC adjustments. We further verify similar behavior through extensive simulations even when the distribution of $\bX$ arises from mixture of discrete distributions -- a situation that mirrors the case of large scale genetic association testing more closely. 

\item [(iii)] Moving beyond fixed signal strenths, we examine the type-I error beahvior for PC adjusted tests unde a classical spiked model ~\cite{johnstone2001distribution} $\bW$ with diverging leading spike strengths. We precisely characterize the effect of spike  strength and the angle between the $\bbeta$ and leading population eigenvector on the procedures $\varphi_{k,\mathrm{ind}}$ noted in points (i) and (ii) -- see Theorem \ref{thm:diverging_spike_out}-\ref{thm:diverging_spike_delocalized}.

\item[(iv)] We provide extensive numerical experiments that complement our theoretical findings (See Section \ref{sec:numerical_experiments}) even for moderate sample sizes. Our simulations exhibit universality of our findings under diverse covariate distributions.

\end{enumerate}
The rest of the paper, organized as follows, makes the above claims precise, rigorous, and elaborate. We first introduce the notation, definitions, and assumptions to set up the main building blocks of our results in Section \ref{sec:tech_prep}. We divide the main theoretical results in Section \ref{sec:main_results} into three parts as follows. In Section \ref{sec:fixed_strength} we present asymptotic results on the power function of $\varphi_{k,\ind},\ind \in \{\into,\out\}$ for fixed spike/signal strength. Subsequently, we present the implications of these results in the context of GWAS type problems in Section \ref{sec:gwas_implications}. Section \ref{sec:diverging_strength} is devoted to understanding the effect of diverging signal strength and thereby providing benchmarks to decide when and how such PC adjustment based testing procedures are valid in practice. Finally, in Section \ref{sec:numerical_experiments} we present detailed numerical experiments to not only verify our theoretical findings in  simulation examples but also explore the continued validity of our analytical results beyond the specific conditions assumed for the theoretical derivations.


\subsection{\bf Technical Preparations}\label{sec:tech_prep} In this subsection we present definitions, assumptions and related discussions, and notation that will be used throughout the rest of the paper.

\subsubsection{\bf Definitions}
To present our main results of this section, we first need a few definitions. We start with the definition of empirical spectral measure of a Hermitian matrix.
\begin{defn}\label{defn:spectral_distribution}
For a $p$-dimensional Hermitian matrix $\Sigma_p$ with eigenvalues $\lambda_{1,p},\lambda_{2,p},\ldots,\lambda_{p,p}$, the Empirical Spectral Distribution (ESD) $H_p$ is defined as the distribution function corresponding to the probability measure 
\begin{align*}
    H_p(x)=\frac{1}{p}\sum_{i=1}^{p} \mathbbm{1}_{\{x \ge \lambda_{i,p}\}}. 
\end{align*}
For a sequence $\{\Sigma_p\}_{p\geq 1}$ of deterministic Hermitian matrices, if the corresponding sequence $\{H_p\}_{p\geq 1}$ of ESDs converges weakly to a deterministic probability distribution $H$ as $p\rightarrow\infty$, then $H$ is defined as the Limiting Spectral Distribution (LSD) of the sequence $\{\Sigma_p\}_{p\geq 1}$.
\end{defn}

Next we define generalized spiked models as proposed by \cite{bai2012gsp}, which considers the base covariance matrix to be arbitrary, instead of the identity matrix as in the classical spiked model ~\cite{johnstone2001distribution}.

\begin{defn}\label{defn:generalized_spike_model}
A sequence $\{\Sigma_p\}_{p\geq 1}$ of deterministic Hermitian matrices is called a sequence of generalized spiked population matrices if the following hold.
\begin{enumerate}[label=(\roman*)]
    \item $\Sigma_p$ can be written as, $$\Sigma_p=\begin{bmatrix}\Sigma^{(A)}_m & 0\\ 0 & \Sigma^{(B)}_{p-m}\end{bmatrix},$$ where $\Sigma^{(A)}_m$ and $\Sigma^{(B)}_{p-m}$ are non-negative and non-random Hermitian matrices of dimensions $m\times m$ ($m$ is finite and fixed) and $(p-m)\times (p-m)$, respectively.
    \item The sequence $\{H_p\}_{p\geq 1}$ of ESDs corresponding to $\{\Sigma_p\}_{p\geq 1}$ converges weakly to a non-random probability distribution $H$.
    \item Let $\Gamma_H$ be the support of $H$, and let the sets of eigenvalues of the submatrices $\Sigma^{(A)}_m$ and $\Sigma^{(B)}_{p-m}$ be $\{\alpha_{1},\geq\ldots\geq\alpha_{m}\}$ and $\{\beta_{1,p},\geq\ldots\geq\beta_{p-m,p}\}$, respectively. Then, $\alpha_i\notin \Gamma_H$ for $i=1,\ldots,m$, and $\max_{1\leq i\leq p-m}{d\left (\beta_{i,p},\Gamma_H\right )}\rightarrow 0$ as $p\rightarrow \infty$, where $d(x,S_A)=\inf_{y\in S_A}|x-y|$ is a distance metric from a point $x$ to a set $S_A$.
\end{enumerate}

In this case, the eigenvalues of $\Sigma^{(A)}_m$ are called the generalized spikes, and the eigenvalues of $\Sigma^{(B)}_{p-m}$ are called the non-spikes. The distribution $H$ is same as the LSD of the sequence $\{\Sigma^{(B)}_{p-m}\}$. 
\end{defn}

Note that the traditional spiked model ~\cite{johnstone2001distribution} is a special case of the generalized spiked model where $H$ is the degenerate distribution at unity.
Next, we will introduce the phase transition boundaries for the generalized spikes analogous to the phase transition boundary ~\cite{baik2005phase} established in the spiked model. {\color{black}Recall that, $\frac{p}{n} \rightarrow \gamma >0$.}
\begin{defn}\label{defn:generalized_bbp}
For $\alpha\notin \Gamma_H, \alpha\neq 0$, we define the following function:
$$\psi(\alpha)=\alpha+\gamma\alpha\int{\frac{\lambda dH(\lambda)}{\alpha-\lambda}}.$$
Then, a generalized spike $\alpha_i$ is called a ``distant spike" if $\psi ' (\alpha_i)>0$, and ``close spike" if $\psi '(\alpha_i)\leq 0$, where $\psi '$ is the first derivative of $\psi$.
\end{defn}
Depending on whether a generalized spike is a distant or a close spike, the asymptotic convergence of the corresponding sample eigenvalues differ according to Theorem 4.1 and Theorem 4.2 of ~\cite{bai2012gsp}. Therefore, the sample eigenvalues corresponding to the generalized spikes go through a phase transition at the boundaries where $\psi ' (\alpha)=0$. Unlike in the spiked model where the phase transition happens only at the two boundaries $(1\pm \sqrt{\gamma})$, the generalized spiked model can have more than two phase transition boundaries. The next assumption is intended to simplify our mathematical derivations by only allowing one phase transition boundary $\alpha^*>\sup{\Gamma_H}$, and by assuming that all the generalized spikes have multiplicity one and are above that boundary (i.e., all distant spikes). The extension of our derivations to allow close spikes, and generalized spikes around multiple phase transition boundaries are similar but tedious, and hence omitted in this paper. Finally we define covariate data distributions we will working with throughout.

\begin{defn}\label{defn:data_mat_generalized spiked}
We say that a data matrix $\widetilde{\mathbb{X}}_{n\times p}$ follows a generalized spiked distribution with $k^*$ spike eigenvalue-eigenvector pairs $\left\{(\lambda_j,v_j)\right\}_{j=1}^{k^*}$ if  $\widetilde{\mathbb{X}}=\mathbf{Z}\Lambda_p^{1/2}V_p$, where $\mathbf{Z}$ is an $n\times p$ random matrix with i.i.d. sub-Gaussian elements such that $\E(Z_{ij})=0,\E(|Z_{ij}|^2)=1$, and $\Lambda_p^{1/2}$ is the positive definite Hermitian square root of $\Lambda_p$ such that
\begin{enumerate}[label=(\alph*)]
         \item $\{\Sigma_p\}_{p\geq 1}$ is a sequence of real symmetric positive definite matrices with spectral decomposition $\Sigma_p=V_p\Lambda_pV_p^\top$, and $\Lambda_p$ satisfies the generalized spiked assumptions as outlined above (Definition~\ref{defn:generalized_spike_model}) with $k^*$ (finite) generalized spikes. The eigenvalues and eigenvectors of $\Sigma_p$ are given by $\lambda_1,\ldots,\lambda_p$ and $v_1,\ldots,v_p$, respectively.
    \item The sequence $\{\|\Sigma_p\|\}$ of spectral norms is bounded.
    
    \item $\lambda_1> \lambda_2> \ldots>\lambda_{k^*}>\sup{\Gamma_H}$ denote the generalized spikes of $\Sigma_p$, and \textcolor{black}{$\psi'(\lambda_i)>0$} for $i=1,\ldots,k^*$.
\end{enumerate}
In this case we denote $\widetilde{\mathbb{X}}\sim \mathrm{GSp}(\left\{(\lambda_j,v_j)\right\}_{j=1}^{k^*};\Gamma_H;n,p)$
\end{defn}

Some of our results are derived under the classical spiked model ~\cite{johnstone2001distribution}. To that end we will use the following definition.

\begin{assn}
We say that a data matrix $\widetilde{\mathbb{X}}_{n\times p}$ follows a spiked distribution with $k^*$ spike eigenvalue-eigenvector pairs $\left\{(\lambda_j,v_j)\right\}_{j=1}^{k^*}$ if  $\widetilde{\mathbb{X}}=\mathbf{Z}\Sigma^{1/2}_p$, where $\mathbf{Z}$ is an $n\times p$ random matrix with i.i.d. sub-Gaussian elements such that $\E(Z_{ij})=0,\E(|Z_{ij}|^2)=1$, and {\color{black}$\Sigma_p=I+\sum_{j=1}^{k^*} \lambda_j v_jv_j^T$ }with $\lambda_1\geq \ldots\lambda_{k^*}>\sqrt{\gamma}$ where $\gamma=\lim p/n$ and $v_1,\ldots,v_{k^*}$ are orthonormal vectors. In this case we denote $\widetilde{\mathbb{X}}\sim \mathrm{Sp}(\left\{(\lambda_j,v_j)\right\}_{j=1}^{k^*};n,p)$
\end{assn}


\subsubsection{\bf Assumptions}
Using the above definitions, we can now state our assumptions various subsets of which will be useful for our theoretical results.

\begin{assn}\label{assn:spike_and_dimension}
\begin{enumerate}
    \item  [(a)]$ n\rightarrow\infty, p= p(n) \rightarrow\infty, p/n\rightarrow\gamma\in (0,\infty)$.
   
    \item [(b)] $\bbW\sim \mathrm{GSp}(\left\{(\lambda_j,v_j)\right\}_{j=1}^{k^*};\Gamma_H;n,p)$.
    
    \item [(b')] $\bbW\sim \mathrm{Sp}(\left\{(\lambda_j,v_j)\right\}_{j=1}^{k^*};p)$.

    \item[(b'')] $\bbW = \mathbf{Z}\Sigma^{1/2}_p$, the rows of $\mathbf{Z}$ are identically distributed, and the elements of $\mathbf{Z}$ are independent sub-Gaussian elements with mean $0$, variance $1$, and uniformly bounded sub-Gaussian norm. $\Sigma_p$ satisfies $\lambda_{k^{\star}} \rightarrow \infty$ and $\lambda_{k^{\star}+1}= O(1)$.
    
    \item [(c)] ${\bbX}:=[\mathbf{A},\bbW]\sim  \mathrm{GSp}(\left\{(\lambda_j,v_j)\right\}_{j=1}^{k^*};\Gamma_H;n,p+1)$.
    
    \item [(c')] ${\bbX}:=[\mathbf{A},\bbW]\sim  \mathrm{Sp}(\left\{(\lambda_j,v_j)\right\}_{j=1}^{k^*};p+1)$.
\end{enumerate}
\end{assn}
These assumptions specify the growth of covariates in the model along with classical or generalized spiked model structure of the covariates.


{\color{black} We will mostly assume the following dependence between $\mathbf{A}$ and $\mathbb{W}$.}
\begin{assn}\label{assn:prop_projection}
Let $\mathbf{A}=\mathbb{W}\btheta+\boldsymbol{\eta}$ with $\boldsymbol{\eta} \sim N(0,\sigma^2_g I)$.
\end{assn}
Although Assumption \ref{assn:prop_projection} seems restrictive, we keep it here for keeping our arguments short and only for the \textit{precise} analysis of the test $\varphi_{k,\out}$ -- this assumption is often not needed either for analysis of $\varphi_{k,\into}$ or while demonstrating a lower bound on the Type I error of $\varphi_{k,\out}$ instead of deriving a precise limit. Indeed, this result is easy to extend to sub-Gaussian $A$'s and we additionally verify through extensive simulations (see Section \ref{sec:numerical_experiments}) that the intuitions from our main theorems continue to be valid even without assuming the above specific conditional distribution of $A|\bX$. 

Finally we note that for the analysis in epigenetic studies ~\cite{barfield2014accounting}, where $A$ contains DNA methylation levels of a particular CpG site of interest and one performs PCA on genetic variants $\bW$ to account for population stratification, Assumption \ref{assn:prop_projection} is often reasonable. 

Our last assumption removes this restrictions in some of our results at the cost of obtaining bounds instead of precise limiting objects.  
\begin{assn}\label{assn:prop_projection_nonlin}
Let $A_i$'s be centered and sub-Gaussian with parameter $\sigma^2_a$. 
\end{assn}

Assumption \ref{assn:prop_projection_nonlin} will be used for deriving lower bounds on Type I error for tests based on $\mathrm{LR}_{k,\out}$ and obtain partial sharp limiting Type I error for tests based on $\mathrm{LR}_{k,\into}$ (see e.g. Theorem \ref{thm:thm:spiked_model_fixed_strength_out_nogaussian_a}).

\subsubsection {\bf Notation}
The results in this paper are mostly asymptotic (in $n$) in nature and thus requires some standard asymptotic  notations.  If $a_n$ and $b_n$ are two sequences of real numbers then $a_n \gg b_n$  (and $a_n \ll b_n$) implies that ${a_n}/{b_n} \rightarrow \infty$ (and ${a_n}/{b_n} \rightarrow 0$) as $n \rightarrow \infty$, respectively. Similarly $a_n \gtrsim b_n$ (and $a_n \lesssim b_n$) implies that $\liminf_{n \rightarrow \infty} {{a_n}/{b_n}} = C$ for some $C \in (0,\infty]$ (and $\limsup_{n \rightarrow \infty} {{a_n}/{b_n}} =C$ for some $C \in [0,\infty)$). Alternatively, $a_n = o(b_n)$ will also imply $a_n \ll b_n$ and $a_n=O(b_n)$ will imply that $\limsup_{n \rightarrow \infty} \ a_n / b_n = C$ for some $C \in [0,\infty)$). If $C>0$ then we write $a_n=\Theta(b_n)$.  If  $a_n/b_n\rightarrow 1$, then we  say $a_n \sim b_n$. 
A sequence of random variable $X_n$ is called $\Omega_{\P}$, $\mathrm{a.s.} \ \P$ if $\exists C>0$ such that $\P(X_n>C) \rightarrow 1$.

\section{Main Results} \label{sec:main_results}
The behavior of the tests $\varphi_{k,\ind}(t), \ind \in \{\into,\out\}$,  crucially depends on the ``strength" of principal components in the population model which in turn is reflective of the strength of population stratification under consideration for the GWAS studies. To capture this, we first present results for fixed  strength of principal components in Section \ref{sec:fixed_strength}. 
Subsequently, in Section \ref{sec:diverging_strength} we also present a sharp phase transition depending on spike strength in  diverging spike strength of population principal components while considering the traditional spiked model  ~\cite{johnstone2001distribution}. The implications of these results to GWAS type problems is discussed in Section \ref{sec:gwas_implications} 
Finally, \ref{sec:gwas_implications} also presents results when the studies come from  a mixture of populations and thereby directly addressing population stratification type issues in GWAS.

\subsection{\bf Fixed Strength of Population PCs}\label{sec:fixed_strength}

We begin by stating and discussing the results for $\mathrm{LR}_{k,\mathrm{ind}}$ fixed stratification strength, i.e., $\lambda_1= O(1)$ whenever $\bbX$ or $\bbW$ follow a generalized spiked distribution (as in Definition \ref{defn:data_mat_generalized spiked}) with top spiked eigenvalue $\lambda_1$. 


\begin{theorem}\label{thm:spiked_model_fixed_strength_out}
 Consider testing \eqref{eqn:main_hypo} under \eqref{eqn:model_gaussian_linear_outcome_regression} using $\varphi_{k,\ind}(t)$. Then the following hold with any fixed $k\geq k^*$ under Assumptions~\ref{assn:spike_and_dimension} (a), (b), and Assumption~\ref{assn:prop_projection} for $\ind =\out$. Moreover, the same conclusion holds under Assumption~\ref{assn:spike_and_dimension} (a), (c) and Assumption~\ref{assn:prop_projection} for $\ind =\into$. 

\begin{enumerate}[label=\textbf{\roman*}.]
\item \label{thm:both_fixed_fixed_strength_out_reg} {\color{black} Suppose that $\bbeta,\btheta$ are fixed effects.  Then for any fixed cut-off $t \in \mathbb{R}$ and fixed $M>0$ one has
\begin{align*}
    & \liminf\limits_{n\rightarrow \infty:\atop p/n \rightarrow \gamma>0}\sup\limits_{\|\bbeta\|,\|\boldsymbol{\theta}\|\leq M}\P_{\delta=0}\left(\mathrm{LR}_{k,\out}>t\right)=1, \\ \text{and} & \liminf\limits_{n\rightarrow \infty:\atop p/n \rightarrow \gamma>0}\sup\limits_{\|\bbeta\|\leq M}\P_{\delta=0}\left(\mathrm{LR}_{k,\into}>t\right)=1.
\end{align*}}

\item \label{thm:one_fixed_fixed_strength_out_reg} \textcolor{black}{Assume that $\bbeta\sim N(\mathbf{0},\sigma^2_{\beta}{I}/p)$. Then $\exists$ function  $\upsilon_{\out}\left(\cdot,\cdot\right):\mathbb{R}^2\rightarrow \mathbb{R}_+$ such that for any $h,t \in \mathbb{R}$ and $\delta=h_1/\sqrt{n}$ and fixed $h_2$ one has  
\begin{align*}
   & \lim \P_{\delta=h/\sqrt{n}}(\mathrm{LR}_{k,\out}>t)= \upsilon_{\out}\left(t,h\right) \quad \forall \btheta \quad \text{s.t.} \quad \|\btheta\|\geq h_2;\\
    \text{and} & \lim \P_{\delta=h/\sqrt{n}}(\mathrm{LR}_{k,\into}>t)= \upsilon_{\into}\left(t,h\right)
\end{align*}}
The function $\upsilon_{\out}$ depends on $\{\gamma,\sigma^2_{y}, \sigma^2_{\beta},k^*,k,\Gamma_{H},\left\{\langle\mathbf{v}_j,\btheta\rangle,\lambda_j\right\}_{j=1}^{k^*}\}$ and the function $\upsilon_{\into}$ depends on $\{\gamma,\sigma^2_{y}, \sigma^2_{\beta},k^*,k,\Gamma_{H},\left\{\lambda_j\right\}_{j=1}^{k^*}\}$. \footnote{the precise forms of the $\upsilon_{\ind}$ can be found in the proof where its dependence on the sequence of tuples $\{\langle\mathbf{v}_j,\btheta\rangle,\{\lambda_j\}_{j=1}^{k^*}\}$ is suitably defined in a limiting sense. Throughout we shall implicitly assume that suitable quadratic forms of the type $\sum_{j=1}^p \psi(\lambda_j)\boldsymbol{\zeta}^Tv_jv_j^T\boldsymbol{\zeta}$ for certain deterministic functions $\psi$ and sequence of vectors $\boldsymbol{\zeta}$ of bounded norm, that arise in the proof of the theorem, exists. Otherwise, our results can be written more tediously in terms of upper and lower bounds on the power function involving liminf's and limsup's of similar quadratic forms.} and is larger than $\alpha$ at $h=0, t=\chi^2_{1}(\alpha)$ (the $\alpha$-quantile of $\chi^2_1$). \textcolor{black}{ A qualitatively similar result holds for either $\bbeta$ non-random and $\btheta$ random or both $\bbeta,\btheta$ random effects} \footnote{with analogous but different limiting functions.}
\end{enumerate}
\end{theorem}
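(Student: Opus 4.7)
The plan is to first exploit the fact that $\mathcal{A}_{k,\ind}$ is a single vector in $\R^n$, so $P_{\calC(\mathcal{A}_{k,\ind})}$ is a rank-one projector and
\[
\mathrm{LR}_{k,\ind} \;=\; \frac{(\mathcal{A}_{k,\ind}^T\mathbf{Y})^2}{\|\mathcal{A}_{k,\ind}\|^2}.
\]
Plugging in $\mathbf{Y}=\mathbf{A}\delta+\bbW\bbeta+\beps$ and using the idempotence identity $\mathcal{A}_{k,\ind}^T\mathbf{A}=\|\mathcal{A}_{k,\ind}\|^2$ gives $\mathcal{A}_{k,\ind}^T\mathbf{Y}=\delta\|\mathcal{A}_{k,\ind}\|^2+\mathcal{A}_{k,\ind}^T\bbW\bbeta+\mathcal{A}_{k,\ind}^T\beps$. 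From here I would condition on as much as possible to reduce the statistic to a scaled non-central $\chi_1^2$ and then identify deterministic limits for its scale and non-centrality.

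\textbf{Part I (fixed effects).} Conditioning on $(\bbW,\boldeta)$ so that $\mathcal{A}_{k,\ind}$ is deterministic, one has under the null $\mathrm{LR}_{k,\ind}\mid(\bbW,\mathbf{A})\sim \sigma_y^2\,\chi_1^2\!\left((\mathcal{A}_{k,\ind}^T\bbW\bbeta)^2/(\sigma_y^2\|\mathcal{A}_{k,\ind}\|^2)\right)$. It therefore suffices to exhibit, for any $M>0$, a $\bbeta$ with $\|\bbeta\|\le M$ for which the non-centrality diverges. In the proportional regime with fixed generalized spikes, sample PC directions $\widetilde{\mathbf{V}}_{k,\ind}$ are inconsistent for $v_1,\ldots,v_{k^*}$ (BBP-type cosines converge to constants strictly less than one, \citep{bai2012gsp}), so choosing $\bbeta$ essentially supported away from $\mathrm{span}(v_1,\ldots,v_{k^*})$ leaves $(I-P_{\calC(\bbW\widetilde{\mathbf{V}}_{k,\ind})})\bbW\bbeta$ with $\Omega_{\P}(\sqrt{n})$ norm. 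Concentration of the quadratic form $\btheta^T\bbW^T(I-P_{\calC(\bbW\widetilde{\mathbf{V}}_{k,\ind})})\bbW\bbeta$ together with $\|\mathcal{A}_{k,\ind}\|=O_\P(\sqrt{n})$ then forces the non-centrality to be $\Omega_\P(n)$, driving $\P(\mathrm{LR}_{k,\ind}>t)\to 1$ for every fixed $t$. The $\ind=\into$ variant would use a rank-one perturbation bound comparing the singular vectors of $\bbX=[\mathbf{A},\bbW]$ to those of $\bbW$ up to $O_\P(1/\sqrt{n})$.

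\textbf{Part II (random effects).} For $\bbeta\sim N(\mathbf{0},\sigma_\beta^2 I/p)$, condition on $(\bbW,\mathbf{A})$: the combined noise $\mathcal{A}_{k,\ind}^T\bbW\bbeta+\mathcal{A}_{k,\ind}^T\beps$ is Gaussian with mean zero and variance
\[
V_n \;:=\; \sigma_y^2\,\|\mathcal{A}_{k,\ind}\|^2+\frac{\sigma_\beta^2}{p}\|\bbW^T\mathcal{A}_{k,\ind}\|^2,
\]
so exactly
\[
\mathrm{LR}_{k,\ind}\,\Big|\,(\bbW,\mathbf{A}) \;\sim\; \frac{V_n}{\|\mathcal{A}_{k,\ind}\|^2}\,\chi_1^2\!\left(\frac{\delta^2\|\mathcal{A}_{k,\ind}\|^4}{V_n}\right).
\]
Once I establish in-probability limits $\|\mathcal{A}_{k,\ind}\|^2/n\to c_1$ and $\|\bbW^T\mathcal{A}_{k,\ind}\|^2/(np)\to c_2$ for deterministic $c_1,c_2>0$, taking $\delta=h/\sqrt{n}$ yields $\upsilon_{\ind}(t,h)=\P\!\left((\sigma_y^2+\sigma_\beta^2 c_2/c_1)\,\chi_1^2(h^2c_1/(\sigma_y^2+\sigma_\beta^2 c_2/c_1))>t\right)$, and $\upsilon_{\ind}(0,\chi_1^2(\alpha))>\alpha$ is immediate since the scale strictly exceeds $\sigma_y^2$. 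The mixed cases in the final footnote follow by additionally integrating whichever parameter is random before taking in-probability limits.

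\textbf{Main obstacle.} The crux will be computing $c_1,c_2$ under the generalized spiked model. Using Assumption~\ref{assn:prop_projection} to decompose $\mathcal{A}_{k,\out}=(I-P_{\calC(\bbW\widetilde{\mathbf{V}}_{k,\out})})(\bbW\btheta+\boldeta)$, the squared norm splits into a signal quadratic form $\btheta^T\bbW^T(I-P)\bbW\btheta/n$, a cross term that vanishes because $\boldeta\perp \bbW$, and a noise piece controlled by $\mathrm{tr}(I-P)/n\to 1$. The signal term is diagonalized via $\Sigma_p=V_p\Lambda_p V_p^T$: spike contributions enter through $\{\lambda_j,\langle v_j,\btheta\rangle\}_{j=1}^{k^*}$ weighted by the known asymptotic alignments $\langle\widetilde{v}_j,v_j\rangle^2\to 1-\gamma\psi'(\lambda_j)/\psi(\lambda_j)^2$ of \citep{bai2012gsp}, while the bulk contribution reduces to an integral against $H$. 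The same template applied with $\bbW$ replaced by $\bbW^T\bbW$ yields $c_2$. For $\ind=\into$ the argument runs on the single generalized spiked matrix $\bbX=[\mathbf{A},\bbW]$ (Assumption 2.6(c)); since $\mathbf{A}$ is absorbed into the PCA and Assumption~\ref{assn:prop_projection} is not invoked, the spike alignments with $\btheta$ are washed out, leaving $\upsilon_{\into}$ depending only on the spike magnitudes $\{\lambda_j\}_{j=1}^{k^*}$.
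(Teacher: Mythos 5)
Your proposal is correct and follows essentially the same route as the paper: reduce $\mathrm{LR}_{k,\ind}$ to a conditional (scaled) non-central $\chi_1^2$, show the non-centrality is $\Omega_\P(n)$ in the fixed-effects case, and in the random-effects case compute the deterministic limits of $\|\mathcal{A}_{k,\ind}\|^2/n$ and $\|\bbW^T\mathcal{A}_{k,\ind}\|^2/(np)$ via limits of bilinear forms of sample eigenvectors (the paper's Lemma 6.3), your conditional-on-$(\bbW,\mathbf{A})$ law being an equivalent repackaging of the paper's random non-centrality $\hat{\kappa}^2_{k,\out}$. The only point to make explicit in Part I is that $\Omega_\P(\sqrt{n})$ norms alone do not lower-bound the inner product $\mathcal{A}_{k,\ind}^T\bbW\bbeta$; one must pick $\bbeta$ aligned with $\btheta$ (the paper takes $\bbeta=\btheta$) so that the cross quadratic form $\btheta^T\bbW^T(I-P_{\mathcal{C}(\bbW\widetilde{\mathbf{V}_{k,\out}})})\bbW\bbeta$ is itself $\Omega_\P(n)$, which holds because the bulk (non-spike) contribution $\sum_{j>k}\hat{\lambda}_j\langle\hat{v}_j,\btheta\rangle^2$ converges to a strictly positive constant.
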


{\color{black}Theorem \ref{thm:spiked_model_fixed_strength_out} demonstrates that under generalized spike model --- specifically, Assumptions~\ref{assn:spike_and_dimension} (a), (b) for $\ind =\out$ and Assumptions~\ref{assn:spike_and_dimension} (a), (c) for $\ind =\into$ ---  Type-I error inflation is inevitable when spike strength is $O(1)$. This inflation exhibits a dichotomy depending on whether fixed-effect or random-effect regression is considered.}


\begin{remark}
Although some levels of Type I error is expected for fixed and low spike strength ~\cite{patterson2006population}, Theorem \ref{thm:spiked_model_fixed_strength_out} implies that the Type I error can be arbitrarily close to $1$ when using any fixed cut-off. This automatically covers any distribution based cut-off. Indeed, $\mathrm{LR}_{k,\ind}$ diverges in this set-up, and as our proof will suggest, this is not an artifact of specific adversarial choices of $\bbeta,\btheta$, but actually holds for uncountably many choices of $\bbeta,\btheta$. Our proof also suggests that the Type I error is somewhat less pathological when $\bbeta$ is orthogonal to the leading population eigenvectors. This provides intuitive justification of the results in the random effects parts of Theorem  \ref{thm:spiked_model_fixed_strength_out} since mean $0$ random $\bbeta$ is ``on average orthogonal" to any fixed directions.  

\end{remark}

\begin{remark} 
In the notation $\P_{\delta}$ we have suppressed the dependence of $\P$ on $\bbeta,\btheta$ to not only maintain succinct notation but also to avoid confusion when one of $\bbeta$ and $\btheta$ is random. Consequently, in statements of the theorem whenever a supremum over $\bbeta$ or $\btheta$ appears, it implicitly acknowledges the dependence of $\P_{\delta}$ on the respective parameters.
\end{remark}

\begin{remark} 
\textcolor{black}{We note that the results for $\mathrm{LR}_{k,\ind}$ does not involve a supremum over $\btheta$. This is because assuming a spiked model on $\bX$ does not allow a free choice of $\btheta$ since by representing $\btheta$ as the population least squares coefficients implies that $\btheta$ is a function of $\Sigma$. Also, we assume $\|\btheta\|\geq h_2>0$ is bounded away from $0$ to reflect a scenario where the effect of $A$ on $Y$ experiences non-trivial confounding through $\bW$.}
\end{remark}

\begin{remark}
The scaling of the variance of the random effects is imposed to maintain finite signal to noise ratio. Moreover,  the results above do not change if one changes the distribution random effects from Gaussian to other suitable sub-Gaussian distributions as long as the leading population eigenvectors $v_1,\ldots,v_{k^{*}}$ are delocalized. However, the results can be quite specific to various possibilities of localizations and distributions of the random effects otherwise. We do not explore this aspect here in detail.  
\end{remark}

\subsubsection{Application in spike model}\label{remark:classical_spiked_model_example}

We consider a simple application of Theorem \ref{thm:spiked_model_fixed_strength_out} in the case where population covariance matrix is the classical spike model ~\cite{johnstone2001distribution} i.e. $$\Sigma=I+\lambda_1 e_1e^\top_1,$$ 
Note that, we require $\lambda_1>\sqrt{\gamma}$ for the spike to be separate from the bulk eigenvalues. Here, we exhibit how \ref{thm:spiked_model_fixed_strength_out} can be used to characterize precise type-I error of the LR test for \eqref{eqn:main_hypo}.

By Lemma \ref{lem:distribution_lr} one has
\be
    \mathrm{LR}_{k, \rm out}|\mathbf{A},\mathbb{W}\sim \chi_1^2(\hat{\kappa}^2_{\rm out}),\quad  \hat{\kappa}^2_{k,\rm out}=\frac{\left((\bbeta^T\mathbb{W}^\top+\mathbf{A}^\top \delta)\left(I-{P}_{\mathcal{C}(\mathbb{W}\widetilde{\mathbf{V}}_{k,\rm out})}\right)\mathbf{A}\right)^2}{\mathbf{A}^\top\left(I-{P}_{\mathcal{C}(\mathbb{W}\widetilde{\mathbf{V}}_{k,\rm out})}\right)\mathbf{A}}.
\ee
Note that,
$$\mathbb{P}_{\delta}(\mathrm{LR}_{k,\out}> t_\alpha)=\mathbb{E}\Big(\bar{\Phi}(t_\alpha-\hat{\kappa}_{k,\out}^2)-\Phi(-t_\alpha-\hat{\kappa}_{k,\out}^2)\Big)^2,$$
where $\Phi$ is the standard normal cdf and $\bar{\Phi}=1-\Phi$. Hence, it will be sufficient to derive asymptotic distribution of $\hat{\kappa}_{k,\out}^2$ followed by an application of uniform integrability principle to derive the asymptotic behavior of the likelihood ratio test.

If $\bbeta, \btheta$ are both fixed effects, we can show by the proof of Theorem \ref{thm:spiked_model_fixed_strength_out} (proof details deferred to Section \ref{sec:proofs}), with probability $1-o(1)$, one has $\hat{\kappa}_{k,\out}^2 \ge Cn$ for some $C>0$. This implies, for \textit{any} fixed cutoff $t_\alpha$, $\mathbb{P}_{\delta}(\mathrm{LR}_{k,\out}> t_\alpha) \rightarrow 1$. This implies for fixed effects, no finite cutoffs can yield type-I error of the LR test to a desired level $\alpha$.

Turning to the random effects regression, set $\bbeta \sim N\left(0,\frac1p \sigma^2_\beta\right)$ and  $\btheta:= e_1$. We have shown, in \eqref{eq:theta_fixed_ncp},
\begin{align*}
    \hat{\kappa}^2_{k,\out} \xrightarrow{D} \frac{\sigma^2_\beta(\sigma^2_g m_1+c_4)}{c_0 +\sigma^2_g}\chi^2(h^2(c_0 +\sigma^2_g)^2):=\mathcal{L},
\end{align*}
where the explicit constants are given by
$$c_0= 
\lim_n\sum_{j=k+1}^{n\wedge p}\hat{\lambda}_j\btheta^T\hat{v_j}\hat{v}_j^T\btheta, \quad c_4=\lim_n \frac{1}{np} \sum_{j=k+1}^{n \wedge p} \hat{d_j}^4 \btheta^T\hat{v_j}\hat{v_j}^T\btheta, \quad m_1=\lim_n \frac1p \sum_{j=k+1}^{n \wedge p} \hat{\lambda}_j,$$ and $m_1=m_{1,\gamma}$ is the mean of Mar\v{c}enko-Pastur distribution \cite{bai2010spectral} where $p/n \rightarrow \gamma$. 

To compute the precise values of $c_0,c_4$, note that the limiting spectral distribution of $\Sigma$ is the non-random distribution which is degenerate at $1$. Now we invoke Lemma \ref{lem:spikes_non_spikes} to obtain that $c_0= (1+o(1))\sum_{j=1}^{n \wedge p} \phi_{1j} \btheta^\top v_jv^\top_j \btheta$ and $c_4= (1+o(1))\frac{1}{\gamma}\sum_{j=1}^{n \wedge p} \phi_{2j} \btheta^\top v_jv^\top_j \btheta$, where
\begin{align*}
    \phi_{1j}&=\begin{cases}
    \gamma \frac{\lambda+1}{\lambda^2},\quad j=1\\
    1,\quad j >1 ,\end{cases}\\
    \phi_{2j}&=\begin{cases}
    \gamma \frac{\lambda+1}{\lambda^2}+ \gamma^2 \frac{(\lambda+1)^2}{\lambda^3},\quad j=1,\\
    1+\gamma, \quad j >1.\end{cases}
\end{align*}

Note that, $\Sigma= (1+\lambda_1) e_1e^\top_1+\sum_{j = 2}^{n \wedge p} e_je^\top_j$. So, $v_j=e_j$, $c_0=\frac{\lambda+1}{\lambda^2}$, and $c_4=\gamma \frac{\lambda+1}{\lambda^2}+ \gamma^2 \frac{(\lambda+1)^2}{\lambda^3}$. Hence,
$$\mathbb{P}_{\delta}(\mathrm{LR}_{k,\out}> t_\alpha) \rightarrow \mathbb{E}\Big(\bar{\Phi}(t_\alpha-\mathcal{L})-\Phi(-t_\alpha-\mathcal{L})\Big)^2.$$

Since the distribution of random variable $\mathcal{L}$ is exactly computed here, one can choose $t_\alpha \in \mathbb{R}$ such that the RHS equals $\alpha$. This shows, although for the random effects model, the classical $\chi^2$-based cutoffs do not control type-I error for the LR tests in $p \propto n$ regime, one can find alternative cutoffs to control the level of the test. We demonstrate that this was not the case for fixed effects model where no fixed finite cutoff worked.

\subsubsection{Implications of Theorem \ref{thm:spiked_model_fixed_strength_out}}
Here, we make a few further remarks on the assumptions and implications of Theorem \ref{thm:spiked_model_fixed_strength_out} regarding the behavior of $\varphi_{k,\ind}(t)$ for $\ind \in \{\into,\out\}$. First, Theorem \ref{thm:spiked_model_fixed_strength_out} (i) implies that under fixed effects model on the regression of $Y|A,\bX$ and $A|\bX$ one has pathological behavior of PC adjustment based tests in the sense that for any fixed cut-off $t$, the LRT following PC adjustments has size converging to $1$ (This was also seen in Example \ref{remark:classical_spiked_model_example}). In Section \ref{sec:numerical_experiments} we further present simulation results towards universality of this phenomenon by verifying that this is indeed not an artifact of the particular choice of regression of $A|\bX$ in our analytical explorations. 

Similarly, Theorem \ref{thm:spiked_model_fixed_strength_out} (ii) demonstrates that although the pathology of the size is somewhat diluted for random effects in the regressions of $Y|A,\bX$ and $A|\bX$, the resulting size is still strictly above the desired level $\alpha$ while using the standard $\chi_1^2(\alpha)$ type cut-offs. Indeed, in view of the proportional asymptotic regime (i.e. $p/n\rightarrow \gamma \in (0,\infty)$) and associated inconsistency of PCA ~\cite{johnstone2009consistency}, one might expect some level of discrepancy in the size and power of the PC adjustment based test $\varphi_{k,\ind}$ considered -- and Theorem \ref{thm:spiked_model_fixed_strength_out} provides precise nature of this discrepancy. 

Subsequently,  it is natural to ask whether one can recover the desired $\alpha$-level by suitably inverting the asymptotic power function of the test $\varphi_{k,\ind}(t)$. In this regard,  we first focus on the case of random $\bbeta$ and a close look at the formulae obtained for the power functions $\upsilon_{\ind}$ in Theorem \ref{thm:spiked_model_fixed_strength_out} reveals their intricate dependence on $\left\{\langle\mathbf{v}_j,\btheta\rangle,\lambda_j\right\}_{j=1}^{k^*}\}$ -- which cannot in general be estimated consistently for generalized spiked models with fixed spike strengths \cite{cai2018rate,perry2018optimality}.

In some cases, when $\upsilon_{\ind}$ does not depend on the projection of $\btheta$ in the directions of population eigenvectors (see e.g. Example \ref{remark:classical_spiked_model_example} for the very special case of classical spiked model ~\cite{johnstone2001distribution} and specific assumption on $\btheta$) estimation of the power function is possible -- however in general this seems to be impossible without assuming further structure on $\btheta$ and population $\mathbf{v}_j$'s. In contrast when $\btheta$ is random, one has irrespective of $\bbeta$ that the power function only depends on the $\Gamma_{H}$ (the population spectral distribution for the distribution of $\bX$ or $\bW$) and hence can be potentially estimated. To keep our discussions focused we do not further pursue this avenue here.

To further argue that the pathologies noted above are not an artifact of Assumption \ref{assn:prop_projection}, we now present analogues of the results from Theorem \ref{thm:spiked_model_fixed_strength_out} for the case when $A|\bW$ is not a linear regression. Indeed this is the case when $A$ is discrete (as is the situation for GWAS) and as a result Theorem \ref{thm:spiked_model_fixed_strength_out} does not apply immediately. However as we will see that the qualitative nature of the problem conveyed by Theorem \ref{thm:spiked_model_fixed_strength_out} continues to hold as specified by the lower bounds provided by our next result.  

\begin{theorem}\label{thm:thm:spiked_model_fixed_strength_out_nogaussian_a}
  Consider testing \eqref{eqn:main_hypo} under \eqref{eqn:model_gaussian_linear_outcome_regression} using $\varphi_{k,\ind}(t)$. Also assume that $k\geq k^*$ is fixed and Assumptions 2.6 (a) and 2.8 hold. 
  
  \begin{enumerate}[label=\textbf{\roman*}.]
\item \label{spiked_model_fixed_strength_out_nogaussian_a_out} For fixed effect $\beta$, the following holds under Assumption 2.6 (b) for any $M>0$.
      \begin{align*}
         & \liminf\limits_{n\rightarrow \infty:\atop p/n \rightarrow \gamma>0}\sup\limits_{\|\bbeta\|\leq M}\P_{\delta=0}\left(\mathrm{LR}_{k,\out}>\chi_1^2(\alpha)\right)>\alpha.
      \end{align*}
      
\item \label{spiked_model_fixed_strength_random_effects_out_nogaussian_a_out} Suppose $\bbeta\sim N(0,\sigma^2_{\beta}I/p)$ and that Assumption 2.6(b) holds. Then 
\begin{align*}
    \lim\limits_{n\rightarrow \infty:\atop p/n \rightarrow \gamma>0} \P_{\delta=0}(\mathrm{LR}_{k,\out}>\chi_1^2(\alpha))>\alpha. 
\end{align*}
  
\item \label{spiked_model_fixed_strength_out_nogaussian_a_in} Suppose that Assumption 2.6 (c) holds. Let $\bbeta_0:=I_{-1}\bbeta$ with $I_{-1}=[e_2:\ldots:e_{p+1}]$ and  

\begin{equation} \label{eq:define_cstar}
c^*_p(\bbeta_0)=\sum_{j=1}^{p}\phi_{1j}\bbeta^\top_0v_jv_j^\top e_1
\end{equation} with $\phi_{1j}$ defined in Lemma \ref{lem:spikes_non_spikes} and $e_j, j\geq 1$ canonical basis $\mathbb{R}^{p+1}$.
\begin{enumerate}
    \item If $\bbeta$ is such that $\liminf |c^*_p(\bbeta_0)|> 0$ then for any $t\in \mathbb{R}$
     \begin{align*}
         \liminf\limits_{n\rightarrow \infty:\atop p/n \rightarrow \gamma>0}\P_{\delta=0}\left(\mathrm{LR}_{k,\into}>t\right)=1.
      \end{align*}
      
      \item If $\bbeta$ is such that $\limsup |c^*_{p}(\bbeta_0)|=0$ and $\liminf |\sqrt{p} c^*_p(\bbeta_0)|>0$ then
      
      \begin{align*}
         & \liminf\limits_{n\rightarrow \infty:\atop p/n \rightarrow \gamma>0}\P_{\delta=0}\left(\mathrm{LR}_{k,\into}>\chi_1^2(\alpha)\right)>\alpha.
      \end{align*}
      
      \item If $\bbeta$ is such that $\limsup |\sqrt{p} c^*_p(\bbeta_0)|=0$ then the following holds if $\liminf \lambda_p^*>0$.
      
      \begin{align*}
         & \limsup\limits_{n\rightarrow \infty:\atop p/n \rightarrow \gamma>0}\P_{\delta=0}\left(\mathrm{LR}_{k,\into}>\chi_1^2(\alpha)\right)=\alpha.
      \end{align*}
\end{enumerate} 
\item \label{spiked_model_fixed_strength_random_effects_out_nogaussian_a_in} Suppose $\bbeta\sim N(0,\sigma^2_{\beta}I/p)$ and that Assumption 2.6(b) holds. Then 
\begin{align*}
    \lim\limits_{n\rightarrow \infty:\atop p/n \rightarrow \gamma>0} \P_{\delta=0}(\mathrm{LR}_{k,\into}>\chi_1^2(\alpha))>\alpha.
\end{align*}

\end{enumerate}
\end{theorem}

The above Theorem shows Type-I error inflation under generalized spike model (Assumption~\ref{assn:spike_and_dimension}) for $A|\bW$ is not a linear regression. This implies that it is necessary to modify the traditional $\chi^2$-based cut-offs for testing using $\mathrm{LR}_{k,\out}$. Moreover, the result for $\mathrm{LR}_{k,\into}$ can remain pathological owing to the fact of double use of $A$ in both as a regressor for $Y$  as well as the in the PC's used in the same regression. 

To provide insight about the nature of the quantity driving the behavior of $\mathrm{LR}_{k,\into}$, we elaborate using the special case of classical spiked model with $\Sigma=I+\lambda v v^T$. The behavior of the test depends on the localization structure of $v$, which is captured via $c^\star_p(\bbeta_0)$ \eqref{eq:define_cstar}. 

First if $v\perp e_1$ then $c_p^*(\bbeta_0)=0$ for any $\bbeta_0$ and hence a $\chi_1^2$ cut-off is valid for $\mathrm{LR}_{k,\into}$. This is indeed expected since $v\perp e_1$ implies no confounding between $Y$ and $A$ through $\bW$. 

Suppose now $\liminf|\langle v,e_1\rangle |>0$ and $\liminf|\langle v,\bbeta\rangle |>0$. Now, we land in the situation of part (a) of  Theorem \ref{thm:thm:spiked_model_fixed_strength_out_nogaussian_a}\ref{spiked_model_fixed_strength_out_nogaussian_a_in} (a) since using the fact that $\bbeta_0^Te_1=0$ in this case we have $\liminf |c_p^*(\bbeta_0)|=\liminf |(\phi_{11}-\phi_{12})\bbeta_0^Tvv^Te_1 +\bbeta_0^Te_1|=\liminf |(\phi_{11}-\phi_{12})\bbeta_0^Tvv^Te_1|>0$. 

Finally if $|\langle v,e_1 \rangle|=\Theta(1/\sqrt{p})$ we can appeal to Theorem \ref{thm:thm:spiked_model_fixed_strength_out_nogaussian_a}\ref{spiked_model_fixed_strength_out_nogaussian_a_in} (b) and a Type error inflation occurs while using traditional $\chi_1^2$ cut-off.
\subsection{\bf Implications for GWAS type Analyses}\label{sec:gwas_implications}

As mentioned in the introduction, the method analyzed above has especially gathered immense popularity in GWAS type analysis where adjusting for variables in $\bX$ crucially adjusts for population stratification and unmeasured confounding ~\cite{price2006principal}.  In particular, 
a classical problem of modern GWAS is that of testing for individual genetic variants while adjusting for other variants and environmental factors ~\cite{visscher201710}. 

One of the main challenges in modern genetic association studies is population stratification. In particular, the presence of population sub-structure where the dataset consists of several sub-populations with different phenotypic means and varying allele frequencies can confound the relationship between disease and genes. As a result, many commonly used association tests may be severely biased.

To address this issue, several methods have been developed to account for population structure. These methods can be organized into three broad themes:
\begin{enumerate}
    \item[(i)]
    Methods that use linear combinations of  other genetic markers (which capture population sub-structure in their distribution) 
as covariates in the analysis using linear or logistic regression. These markers serve as surrogates for the underlying strata \cite{chen2003qualitative,zhang2003semiparametric,zhu2002association,price2006principal}. Owing to the seminal paper of \cite{price2006principal} have emerged as the most popular of these available methods. This is the method we are analyzing in the paper in high-dimensional covariates regime.

    \item[(ii)] Method of genomic control ~\cite{devlin1999genomic}, which addresses the effects that unknown  population-admixture might have on the variance of commonly used test test statistics.
    
    \item[(iii)] Methods that rely on a model and data
on the additional genetic markers ~\cite{pritchard2000inference} to infer the latent population structure and subsequently incorporates this model into down stream analysis.
\end{enumerate}

The implications of Theorem \ref{thm:mixture_model_fixed_strength} can be best understood through its connection to a simple population admixture model as follows. Suppose that $\bX_i =(X_{i1},\ldots,X_{ip})$ is such that there exists $S\subset \{1,\ldots,p\}$ coordinates where the distributions of $X_{ij},j\in S$ are a mixture between two population and for $j\in S^c$ the $X_{ij}$'s are same between the two populations. 

For studying GWAS type problems, under LD pruning ~\cite{hartl1997principles, laird2011fundamentals} of the genotypes, one such situation can be idealized through independently having $X_{ij}\sim \frac{1}{2}\mathrm{Bin}(2,q_{1j})+\frac{1}{2}\mathrm{Bin}(2,q_{2j})$ with $q_{1j}=q_{2j}=q$ for $j\in S$ and $q_1=q_{1j}\neq q_{2j}=q_2$ for $j\in S^c$. It is easy to check that in this case, $\mathrm{Var}(\bX)=\sigma^2 I+(q_1-q_2)^2|S|\mathbf{v}\mathbf{v}^T$ for some $\sigma^2>0$ and unit vector $\mathbf{v}$. Consequently, 
the covariance matrix of $\bX$ indeed follows a spiked model with spike strength proportional to $|S|$.

Theorem \ref{thm:diverging_spike_out} pertains to the traditional spiked model as an intuitive simplification from the motivating mixture model described above and establishes precise phase transition for the performance of EIGENSTRAT type procedures based on population stratification strength -- as captured by the leading spike strength. Indeed, this provides an useful benchmark to check the validity of using PC adjustment based testing for low dimensional parameters.

Our next result is targeted towards showing that we can translate the results from the generalized spike model set up to a more relevant case of mixture models on the rows of $\mathbb{X}$. In particular, it is natural to assume such a mixture distribution for the genotypes in genetic association studies while considering population admixture models ~\cite{price2006principal}. 

\begin{theorem}\label{thm:mixture_model_fixed_strength}
{\color{black} Consider testing \eqref{eqn:main_hypo} under \eqref{eqn:model_gaussian_linear_outcome_regression} using $\varphi_{k,\ind}(t)$. Then the same conclusion as of Theorem \ref{thm:spiked_model_fixed_strength_out} holds with any fixed $k\geq 1$ under Assumptions 2.6 (a), and 2.7 for $\ind =\out$ when $\mathbf{W} \sim \sum_{i=1}^{L} w_i F(\bmu_i,\Sigma)$ for any fixed $L$ with $w_i \ge 0$, with $\sum_{i=1}^{L} w_i =1$, $\sup_i \|\mu_i\| \le 1$, $\|\Sigma\|=O(1)$  where $F(\bmu,\Sigma)$ is the distribution of a random vector $\Sigma^{1/2}\mathbf{Z}+\bmu$ where $\mathbf{Z}$ is a random vector with mean zero i.i.d. sub-Gaussian coordinates. When $\ind = \into$, the same conclusion holds under Assumption 2.6 (a) when $\mathbf{X} \sim \sum_{i=1}^{L} w_i F(\bmu_i,\Sigma)$. }

\end{theorem}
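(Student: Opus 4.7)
The plan is to reduce the mixture-of-distributions setup to an effective spiked-covariance model, so that the machinery developed in the proof of Theorem \ref{thm:spiked_model_fixed_strength_out} can be ported over with only cosmetic changes. Conditional on the latent cluster assignments $c_1,\ldots,c_n \in \{1,\ldots,L\}$, I would decompose
$$\bbW = N + \widetilde{\bbW},$$
where $N$ is the $n \times p$ matrix whose $i$-th row is $\bmu_{c_i}^T$ and $\widetilde{\bbW}$ has i.i.d.\ rows of the form $\Sigma^{1/2} Z_i$ with $Z_i$ a centered sub-Gaussian vector with identity covariance. Since $L$ is fixed, $N$ has rank at most $L$, and by the law of large numbers the empirical cluster frequencies $\hat{w}_\ell = n^{-1}\sum_i \mathbf{1}(c_i = \ell)$ concentrate at $w_\ell$, so $N^T N / n$ converges in operator norm to a rank-at-most-$L$ matrix whose non-zero eigenvalues are bounded away from zero whenever the $\bmu_\ell$'s are not all equal. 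This yields spike directions of order $\Theta(n)$ sitting on top of a Marchenko--Pastur-type bulk governed by $\Sigma$.

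The first technical step is to show that the top right singular vectors of $\bbW$ align, up to vanishing error, with the span of $\{\bmu_\ell\}_{\ell=1}^L$. Writing
$$\bbW^T \bbW = \widetilde{\bbW}^T \widetilde{\bbW} + N^T N + N^T \widetilde{\bbW} + \widetilde{\bbW}^T N,$$
the dominant perturbation of $\widetilde{\bbW}^T \widetilde{\bbW}$ is $N^T N$ of rank at most $L$ and operator norm $\Theta(n)$, while the cross terms $N^T \widetilde{\bbW}$ and $\widetilde{\bbW}^T N$ have operator norm $O(\sqrt{n})$ by standard sub-Gaussian matrix concentration. Standard low-rank perturbation arguments of BBP type, adapted to a low-rank deformation on top of a deformed Marchenko--Pastur bulk driven by $\Sigma$, then yield that the top $L$ right singular vectors of $\bbW$ are $o(1)$-perturbations of directions in the span of $\{\bmu_\ell\}_\ell$, while the bulk of $\bbW^T\bbW/n$ follows the deformed Marchenko--Pastur law associated with $\Sigma$.

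With this spectral picture in hand, the argument of Theorem \ref{thm:spiked_model_fixed_strength_out} transfers directly. The relevant quadratic forms such as $\bbeta^T \bbW^T (I - P_{\mathcal{C}(\bbW \tilde{V}_{k,\out})}) \bbW \bbeta$ and $\mathbf{A}^T (I - P_{\mathcal{C}(\bbW \tilde{V}_{k,\out})}) \mathbf{A}$ split into a piece lying in the span of the mean shifts (absorbed by the top $k \ge 1$ PCs) and a residual piece governed by the bulk spectrum of $\Sigma$. For fixed $(\bbeta,\btheta)$ with a non-vanishing component orthogonal to $\mathrm{span}\{\bmu_\ell\}$, the residual piece drives $\mathrm{LR}_{k,\out}$ to infinity, giving $\sup_{\|\bbeta\|,\|\btheta\| \le M} \P_{\delta=0}(\mathrm{LR}_{k,\out} > t) \to 1$; for Gaussian $\bbeta \sim N(0,\sigma_\beta^2 I/p)$, the limit law of $\mathrm{LR}_{k,\out}$ is a noncentral $\chi^2_1$ whose noncentrality has a deterministic positive limit (in the same form as $\upsilon_{\out}$), producing a limiting Type I error strictly above $\alpha$. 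The case $\ind = \into$ is handled identically by applying the same decomposition to $\bbX = [\mathbf{A}, \bbW]$, so that the top PC's of $\bbX$ again align with the mean shifts.

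The main obstacle is the low-rank perturbation analysis under general $\Sigma$. In Theorem \ref{thm:spiked_model_fixed_strength_out} the spike structure was encoded deterministically via $\Lambda_d$ on top of a noise matrix with i.i.d.\ sub-Gaussian entries, whereas here the spikes arise from a \emph{random} low-rank mean-shift $N$ sitting on top of a noise matrix $\widetilde{\bbW}$ with non-trivial covariance $\Sigma$. Controlling the interaction between $N$ and $\widetilde{\bbW}$, specifically showing that the cross terms $N^T \widetilde{\bbW}$ do not perturb the top singular subspace beyond $o(1)$ while simultaneously tracking the deformed Marchenko--Pastur bulk, is where the bulk of the technical work lies. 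This can be executed via concentration of sub-Gaussian matrix products combined with the Stieltjes-transform machinery already invoked in the proof of Theorem \ref{thm:spiked_model_fixed_strength_out}, followed by deconditioning on the cluster labels using the concentration of $\hat{w}_\ell$.
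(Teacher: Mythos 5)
Your overall strategy---recast the mixture as an effective spiked model with the mean shifts acting as population spikes and then rerun the proof of Theorem \ref{thm:spiked_model_fixed_strength_out}---is the same as the paper's at a high level, but two load-bearing claims in your spectral analysis fail in the regime this theorem actually lives in. Since $\sup_i\|\bmu_i\|\le 1$, the low-rank piece $N^TN/n$ has operator norm $O(1)$: the effective spikes have \emph{fixed} strength relative to the Mar\v{c}enko--Pastur bulk. Under proportional asymptotics the top sample singular vectors are then \emph{not} $o(1)$-perturbations of $\mathrm{span}\{\bmu_\ell\}$; the BBP-type limit gives $|\langle\hat v_1,v_1\rangle|^2\to\frac{1-\gamma/\lambda_1^2}{1+\gamma/\lambda_1}<1$, and this residual misalignment is precisely what makes the Type I error inflate even when $\btheta$ lies in the spike span. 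If your alignment claim were true, $(I-P_{\mathcal{C}(\bbW\widetilde{\mathbf{V}_{k,\out}})})\bbW\btheta$ would essentially vanish for such $\btheta$ and the conclusion of the theorem would largely disappear. Relatedly, the cross terms are not $O(\sqrt n)$ in operator norm: writing $N=\Xi M$ with $\Xi$ the $n\times L$ membership matrix, each row of $\Xi^T\widetilde{\bbW}$ is a sum of $\Theta(n)$ independent sub-Gaussian vectors in $\R^p$ and hence has norm $\Theta(\sqrt{np})=\Theta(n)$, the same order as $N^TN$ and as the fluctuations of $\widetilde{\bbW}^T\widetilde{\bbW}$ about $n\Sigma$; a naive low-rank perturbation expansion therefore does not close.

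The paper sidesteps both issues by never doing eigenvector perturbation: Lemma \ref{lem:mesmix} gives moment bounds on $\mathbf{D}^\top C\mathbf{D}-\mathrm{tr}(CT)$ with $T=\E(\mathbf{D}\mathbf{D}^\top)=\Sigma+\E(\mathbf{R}\mathbf{R}^\top)$, Lemma \ref{lem:mestre_mixture_bilinear} extends Mestre's deterministic-equivalent result for resolvent bilinear forms to the non-centered mixture so that $T$ can be treated as the population matrix of a generalized spiked model, and Lemma \ref{lem:spikes_non_spikes} then supplies the \emph{biased} limits $\sum_{j>k}\hat\lambda_j^r s_1^\top\hat v_j\hat v_j^\top s_2\to\sum_j\phi_{rj}\,s_1^\top v_jv_j^\top s_2$ that the proof of Theorem \ref{thm:spiked_model_fixed_strength_out} consumes. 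Your choice of $\bbeta=\btheta$ with a non-vanishing component orthogonal to the mean span would salvage the fixed-effects statement (part \ref{thm:both_fixed_fixed_strength_out_reg}), since there only a lower bound on the non-centrality is needed; but the exact limiting power function $\upsilon_{\out}$ in part \ref{thm:one_fixed_fixed_strength_out_reg}, which is part of "the same conclusion" being claimed, cannot be recovered from the alignment picture you describe and requires the bilinear-form machinery above.
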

It is worth noting that mixture models on $\bX$ implies spiked models on the variance covariance matrix of $\bX$ and thereby the above result is somewhat intuitive given the validity of Theorem \ref{thm:spiked_model_fixed_strength_out}. Moreover, in our numerical experiments we shall additionally verify the universality of the results for non-mean-shift type mixture models on $\bX$ (e.g. mixtures of binomial distributions on the coordinates of $\bX$ as more natural candidates for modeling genotypes under admixture of populations under Hardy-Weinberg equilibrium).

\subsection{\bf Diverging Strength of Population PCs}\label{sec:diverging_strength}
Section \ref{sec:fixed_strength} explored the necessity of potential caution while using EIGENSTRAT type procedure for PC adjustment testing of low-dimensional components. However, these results are not able to reconcile the empirical success and popularity of this procedure. Moreover, the results might suggest that the primary issue may be the bias in eigenvectors when spike strengths are fixed. Indeed, the fact that PC based analysis of genetic variations have been able to reveal known population structures ~\cite{novembre2008genes}leads one to believe that diverging spike strengths may be key to the success of PC adjustment-based testing procedures.

However, as we will demonstrate, simply having diverging spike strengths is insufficient to account for the anomalies noted earlier. Instead, a specific signal strength can be calibrated to identify the regimes in which EIGENSTRAT-type procedures are effective.

In this regard, we  now present first results in this direction to derive necessary and sufficient conditions on the success of this procedure under  the classical spiked model \cite{johnstone2001distribution} with diverging spike strengths. In the regime of diverging signal strength, it turns out that we can additionally take into account the effect of the distances between $\bbeta,\btheta$ and the population spike eigenvectors. This seems to be a reasonable consideration to explore since using the sample PC directions as the regressors finds its natural analogue in the population when $\bbeta$ belongs to (or is close to) the linear span of the population spiked eigenvectors.  

To this end, we will use the following natural notion of distance between vectors and sub-spaces. 
In particular,  given a set $V$ and a fixed vector $\beta$ we define $d(\beta,V)=1-\frac{\|P_{V}(\bbeta)\|^2}{\|\bbeta\|^2}$ (with $P_{V}$ denoting the orthogonal projection operator onto $V$) and 

\begin{equation}\label{eq:define_c_tau}
    \mathcal{C}_\tau(V)=\{\beta:   \|\beta\|\le 1, d(\beta,V)\le p^{-\tau}\}.
\end{equation}
Note that, if $\tau_1>\tau_2$, $\mathcal{C}_{\tau_1}(V) \subseteq \mathcal{C}_{\tau_2}(V)$ and larger $\tau$ implies lesser angle of the vector $\bbeta$ with elements of $V$. 

Finally, the results in this case for $\mathrm{LR}_{k,\ind}$ additionally depend on the nature of localization of the coordinates of the spikes. This is intuitive since certain natures of localized coordinates implies a highly strong effect of coordinates of $\bW$ on $A$ whereas delocalized eigenvectors imply otherwise. To avoid confusion, we therefore first present the results for $\mathrm{LR}_{k,\out}$.  


\begin{theorem}\label{thm:diverging_spike_out}
Consider testing \eqref{eqn:main_hypo} under \eqref{eqn:model_gaussian_linear_outcome_regression} using $\varphi_{k,\out}(t)$ and suppose that Assumptions \ref{assn:spike_and_dimension} (a), (b'') and Assumption \ref{assn:prop_projection} hold. 
Further assume that $\lambda_1,\lambda_{k^*}= \Theta(p^{\tau_0})$ with $\tau_0>0$. Then the following hold for any $\tau>0$
\begin{enumerate}[label=\textbf{\roman*}.]
\item \label{thm:both_fixed_diverging_spike_out} Suppose that $\bbeta, \btheta$ are not random.
\begin{enumerate}
    \item If $\min\{\tau_0,\tau\}<1/2$, then for any fixed cut-off $t\in \mathbb{R}$ and $\gamma<1$,
    \[\liminf\limits_{n\rightarrow \infty:\atop p/n \rightarrow \gamma>0} \sup_{\bbeta,\btheta \in \mathcal{C}_{\tau}(S_{V_{k^*}})} \mathbb{P}_{\delta=0}(\mathrm{LR}_{k^*,\out}>t)=1.
    \]
    \item If $\min\{\tau_0,\tau\} >1/2$, then for any $k\geq k^*$ 
     \[\liminf\limits_{n\rightarrow \infty:\atop p/n \rightarrow \gamma>0} \sup_{\bbeta,\btheta \in \mathcal{C}_{\tau}(S_{V_{k^*}})} \mathbb{P}_{\delta=0}(\mathrm{LR}_{k,\out}>\chi^2_1(\alpha))=\alpha.
    \]
\end{enumerate}
\item \label{thm:beta_random_diverging_spike_out}
{\color{black}Suppose $\bbeta \sim N(0, \frac{1}{p}\sigma^2_\beta I_p)$. Then for any $\tau_0\geq 0$ and $\gamma<1$ one has
\[\liminf\limits_{n\rightarrow \infty:\atop p/n \rightarrow \gamma>0} \sup_{\btheta \in \mathcal{C}_{\tau}(S_{V_{k^*}})} \mathbb{P}_{\delta=0}(\mathrm{LR}_{k^\star,\out}>\chi^2_1(\alpha))>\alpha.
    \]
  Further, if $\btheta \sim N(0,\frac{1}{p}\sigma^2_\theta I_p)$,
  \[\liminf\limits_{n\rightarrow \infty:\atop p/n \rightarrow \gamma>0} \mathbb{P}_{\delta=0}(\mathrm{LR}_{k^\star,\out}>\chi^2_1(\alpha))>\alpha.
    \].}
   
\item \label{thm:theta_random_beta_fixed_diverging_spike_out}
Assume $\btheta \sim N(0,\frac{1}{p}\sigma^2_\theta I_p)$ but $\bbeta$ is not random.
If $\tau_0 >0$, then 
     \[\liminf\limits_{n\rightarrow \infty:\atop p/n \rightarrow \gamma>0} \sup_{\bbeta \in \mathcal{C}_{\tau}(S_{V_{k^*}})} \mathbb{P}_{\delta=0}(\mathrm{LR}_{k,\out}>\chi^2_1(\alpha))=\alpha.
    \]
\end{enumerate}
Above $S_{V_{k^*}}$ refers to the subspace spanned by $v_1,\ldots,v_{k^*}$.
\end{theorem}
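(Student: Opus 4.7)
The plan is to reduce the analysis of $\varphi_{k,\out}$ to the asymptotics of the conditional noncentrality of $\mathrm{LR}_{k,\out}$, and then control it via the SVD of $\mathbb{W}$ combined with sharp diverging-spike eigenvector/eigenvalue estimates. By Lemma \ref{lem:distribution_lr} (as used in Remark \ref{remark:classical_spiked_model_example}), under $\delta=0$ we have $\mathrm{LR}_{k,\out}\mid(\mathbf{A},\mathbb{W})\sim \chi_1^2(\hat{\kappa}_{k,\out}^2)$ with
\[
\hat{\kappa}_{k,\out}^2
=
\frac{\bigl(\bbeta^T\mathbb{W}^T(I-P_{\mathcal{C}(\mathbb{W}\widetilde{\mathbf{V}_{k,\out}})})\mathbf{A}\bigr)^2}{\mathbf{A}^T(I-P_{\mathcal{C}(\mathbb{W}\widetilde{\mathbf{V}_{k,\out}})})\mathbf{A}}.
\]
Writing the SVD $\mathbb{W}=\sum_j\hat{d}_j\hat{u}_j\tilde{v}_j^T$ and observing that $\mathcal{C}(\mathbb{W}\widetilde{\mathbf{V}_{k,\out}})=\mathrm{span}\{\hat{u}_1,\dots,\hat{u}_k\}$ gives $P^\perp:=I-P_{\mathcal{C}(\mathbb{W}\widetilde{\mathbf{V}_{k,\out}})}=\sum_{j>k}\hat{u}_j\hat{u}_j^T$ and hence $\mathbb{W}^TP^\perp\mathbb{W}=\sum_{j>k}\hat{d}_j^2\tilde{v}_j\tilde{v}_j^T$. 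Substituting $\mathbf{A}=\mathbb{W}\btheta+\boldsymbol{\eta}$ from Assumption \ref{assn:prop_projection} then rewrites both numerator and denominator as explicit sums over the trailing singular triples of $\mathbb{W}$ and inner products against the isotropic Gaussian $\boldsymbol{\eta}$.

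The decisive random-matrix inputs, for the diverging classical spike regime $\lambda_1,\lambda_{k^*}=\Theta(p^{\tau_0})$, are: (a) for $j\leq k^*$, $|\langle\tilde{v}_j,v_j\rangle|^2=1-O(1/\lambda_j)=1-O(p^{-\tau_0})$ and $\hat{d}_j^2/n=\Theta(\lambda_j)$; (b) for $j>k^*$, $\hat{d}_j^2/n$ lies in the Mar\v{c}enko--Pastur bulk (bounded above and, when $\gamma<1$, away from zero), and $\sum_{j>k^*}\langle v_i,\tilde{v}_j\rangle^2=O(p^{-\tau_0})$ for every $i\leq k^*$ (cf.\ \cite{baik2005phase,bai2012gsp}). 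Decomposing $\bbeta=\bbeta_S+\bbeta_\perp$ with $\bbeta_S\in S_{V_{k^*}}$ and $\|\bbeta_\perp\|^2\leq p^{-\tau}$ (and analogously for $\btheta$), these inputs deliver the master quadratic-form bound
\[
\sum_{j>k}(\bbeta^T\tilde{v}_j)^2
\;\lesssim\;
p^{-\tau}+p^{-\tau_0}\;\asymp\;p^{-\min(\tau,\tau_0)},
\]
with matching two-sided control realized by the extremizers $\bbeta=v_1$ (leakage-dominated, rate $p^{-\tau_0}$) and $\bbeta=\bbeta_\perp$ with $\|\bbeta_\perp\|^2=p^{-\tau}$ (angle-dominated, rate $p^{-\tau}$).

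Feeding these bounds into $\hat{\kappa}_{k,\out}^2$ now yields all three parts. For (i)(b), Cauchy--Schwarz bounds the deterministic numerator by $Cn\,p^{-\min(\tau,\tau_0)}$ and the $\boldsymbol{\eta}$-contribution by $O_P(\sqrt{n\,p^{-\min(\tau,\tau_0)}})$, while the denominator concentrates at $\Theta(n)$; hence $\hat{\kappa}_{k,\out}^2=O_P(p^{1-2\min(\tau,\tau_0)})=o_P(1)$ for $\min(\tau,\tau_0)>1/2$, uniformly over $\mathcal{C}_\tau(S_{V_{k^*}})$, and uniform integrability (as used in Remark \ref{remark:classical_spiked_model_example}) gives Type I error $\to\alpha$. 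Part (i)(a) follows by plugging in the extremizers, which produce $\hat{\kappa}_{k,\out}^2\gtrsim p^{1-2\min(\tau,\tau_0)}\to\infty$ with high probability whenever $\min(\tau,\tau_0)<1/2$, forcing Type I error $\to 1$. For (ii), integrating out $\bbeta\sim N(0,\sigma_\beta^2 I/p)$ first gives, conditional on $(\mathbf{A},\mathbb{W})$, $\mathrm{LR}_{k^*,\out}\stackrel{d}{=}(1+\tau_n^2)\chi_1^2$ with $\tau_n^2=\sigma_\beta^2\|\mathbb{W}^TP^\perp\mathbf{A}\|^2/(p\,\mathbf{A}^TP^\perp\mathbf{A})$, which is a bulk-weighted average of $\hat{d}_j^2/p$ and stabilizes at a strictly positive constant of order $\sigma_\beta^2 m_{1,\gamma}/\gamma$, inflating Type I error strictly above $\alpha$. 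For (iii), Gaussian marginalization over $\btheta$ contributes an extra factor of $1/p$, tightening the numerator to $O_P(p^{-\min(\tau,\tau_0)})=o_P(1)$ whenever $\tau_0,\tau>0$, recovering the nominal size. The principal obstacle will be the matching lower bound for $\sum_{j>k^*}\hat{d}_j^2\langle v_1,\tilde{v}_j\rangle^2$ needed in (i)(a), since the eigenvector-leakage estimate $\sum_{j>k^*}\langle v_i,\tilde{v}_j\rangle^2=O(1/\lambda_i)$ is typically available only as an upper bound; we would match it by combining the BBP identity $1-|\langle\tilde{v}_1,v_1\rangle|^2\to\gamma/\lambda_1$ with a uniform lower bound on the bulk singular values of $\mathbb{W}$, which in turn requires $\gamma<1$ and explains the appearance of that constraint in parts (i)--(ii).
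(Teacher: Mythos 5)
Your proposal is correct and follows essentially the same route as the paper's proof: reduction to the conditional noncentrality $\hat{\kappa}^2_{k,\out}$ via Lemma \ref{lem:distribution_lr}, SVD representation of the residual projector, the eigenvector-leakage estimate $\langle\hat{v}_j,\bbeta\rangle^2=\langle v_j,\bbeta\rangle^2+o_{\P}(1/\lambda_j)$ together with $\hat{\lambda}_{k^*+1}=O_{\P}(1)$, Cauchy--Schwarz for the sub-critical regime, the extremizers $v_1$ and a $p^{-\tau}$-tilted vector for the divergence in (i)(a), and the smallest bulk eigenvalue bounded away from zero (hence $\gamma<1$) for the lower bounds in (i)(a) and (ii). The only cosmetic difference is that in (ii) you marginalize $\bbeta$ into a scale mixture $(1+\tau_n^2)\chi_1^2$ whereas the paper keeps the noncentral form and shows $T_3/(np)$ is stochastically bounded away from zero; these are equivalent computations.
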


Theorem~\ref{thm:diverging_spike_out} describes the phase transitions for out-regression when spike strength diverges. Note that the assumption on $\bbW$ is weaker than the spiked model assumption. We show, for fixed-effects regression, Type-I error inflation occurs when $\min{\tau_0, \tau} > \frac{1}{2}$. Here, the spike strength is given by $\lambda_1 = p^{\tau_0}$, and $\tau$ represents the angle between $\beta$ and the top $k^\star$ eigenvectors (defined by \eqref{eq:define_c_tau}). Importantly, the phase transitions in the Type-I error of $\varphi_{k,\out}$ depend not only on whether $\btheta$ and $\bbeta$ are random effects but also on the value of $\tau$.

Note that, Theorem \ref{thm:diverging_spike_out} provides a precise phase transition when $\tau > 0$, which imposes restrictions on $\bbeta$ and $\btheta$. However, if $\tau = 0$, meaning $\bbeta$ and $\btheta$ are unrestricted, our next result demonstrates the absence of a phase transition.
\begin{prop}\label{thm:tau_zero_out_reg}
Consider testing \eqref{eqn:main_hypo} under \eqref{eqn:model_gaussian_linear_outcome_regression} using $\varphi_{k,\out}(t)$.  Further assume that $\lambda_1= \Theta(p^{\tau_0})$, $\tau_0>0$ and $\gamma <1$. Then the following hold under Assumptions 2.6 (a), (b'):
 \be 
 \liminf\limits_{n\rightarrow \infty:\atop p/n \rightarrow \gamma>0} \sup_{\bbeta,\btheta \in \mathcal{C}_{0}(S_{V_{k^*}})} \mathbb{P}_{\delta=0}(\mathrm{LR}_{k^\star,\out}>\chi^2_1(\alpha))>\alpha.
 \ee
 Further, when $\bbeta \sim N(0, \frac1p \sigma^2_\beta I_p)$, the following holds under Assumptions 2.6 (a), (b)':
 \be 
 \liminf\limits_{n\rightarrow \infty:\atop p/n \rightarrow \gamma>0} \sup_{\btheta \in \mathcal{C}_{0}(S_{V_{k^*}})} \mathbb{P}_{\delta=0}(\mathrm{LR}_{k^\star,\out}>\chi^2_1(\alpha))>\alpha.
 \ee
  If $\bbeta \sim N(0,\frac{1}{p}\sigma^2_\beta I_p)$ and $\btheta \sim N(0,\frac{1}{p}\sigma^2_\theta I_p)$, then  \[\liminf\limits_{n\rightarrow \infty:\atop p/n \rightarrow \gamma>0} \mathbb{P}_{\delta=0}(\mathrm{LR}_{k^\star,\out}>\chi^2_1(\alpha))>\alpha.
    \]
However, if $\btheta \sim N(0,\frac{1}{p}\sigma^2_\theta I_p)$, for any $\gamma >0$,
     \[\liminf\limits_{n\rightarrow \infty:\atop p/n \rightarrow \gamma>0} \sup_{\bbeta \in \mathcal{C}_{0}(S_{V_{k^*}})} \mathbb{P}_{\delta=0}(\mathrm{LR}_{k,\out}>\chi^2_1(\alpha))=\alpha.
    \]
\end{prop}

This concludes our analysis of out-regression under diverging spike strength. Specifically, the Type-I error remains at the nominal level $\alpha$ only when $\btheta$ is treated as random effects and $\bbeta$ is unrestricted. Our next result explores the extent to which our findings depend on the assumption of linear regression (Assumption \ref{assn:prop_projection}) and Gaussianity. 

\begin{prop}\label{hm:diverging_spike_out_nongaussian}
Consider testing \eqref{eqn:main_hypo} under \eqref{eqn:model_gaussian_linear_outcome_regression} using $\varphi_{k,\out}(t)$.  Further assume that $\lambda_1= \Theta(p^{\tau_0})$ and $\tau,\tau_0>0$ and $\gamma<1$. Then the following hold under Assumptions 2.6 (a), (b'), and 2.8 whenever $\min\{\tau,\tau_0\}<\frac{1}{2}$.
 \be 
 \liminf\limits_{n\rightarrow \infty:\atop p/n \rightarrow \gamma>0} \sup_{\bbeta \in \mathcal{C}_{\tau}(S_{V_{k^*}})} \mathbb{P}_{\delta=0}(\mathrm{LR}_{k^\star,\out}>\chi^2_1(\alpha))>\alpha.
 \ee
 {\color{black} If $\bbeta \sim N(0, \frac{1}{p}\sigma^2_\beta I_p)$. Then one has
\[\liminf\limits_{n\rightarrow \infty:\atop p/n \rightarrow \gamma>0} \sup_{\btheta \in \mathcal{C}_{\tau}(S_{V_{k^*}})} \mathbb{P}_{\delta=0}(\mathrm{LR}_{k^\star,\out}>\chi^2_1(\alpha))>\alpha.
    \]}
\end{prop}
The result shows for generalized spike model, the Type-I error inflation is an universal phenomena persisting beyond linear regression model.

Now we turn our attention to in-regression $\mathrm{LR}_{k,\into}$ under divergence spikes. As noted above, this behavior is subtle and depends on the nature of localization of the coordinates of the leading eigenvectors of $\Sigma$. Since the complete characterization of all the possible cases that can arise is beyond the scope of the paper, we focus on a simple special case of single spiked model below with completely delocalized spiked eigenvector.

\begin{theorem}\label{thm:diverging_spike_delocalized}
Consider testing \eqref{eqn:main_hypo} under \eqref{eqn:model_gaussian_linear_outcome_regression} using $\varphi_{k,\into}(t)$ and suppose that Assumptions 2.6 (a), (c'), and 2.7 hold.

\begin{enumerate}[label=\textbf{\roman*}.]
\item \label{thm:diverging_spike_fixed_effects_in} Assume that $k^*=1$, $\lambda_1= \Theta(p^{\tau_0})$ with $\tau_0>0$ and that $\inf_j |v_1(j)|,\sup_{j} |v_1(j)|=\Theta(1/\sqrt{p})$. Then the following hold with any fixed $k\geq 1$ and fixed $M>0$ 
\be 
 \liminf\limits_{n\rightarrow \infty:\atop p/n \rightarrow \gamma>0} \sup_{\bbeta\in S_{v_1}:\|\bbeta\|\leq M} \mathbb{P}_{\delta=0}(\mathrm{LR}_{k,\into}>\chi^2_1(\alpha))=\alpha.
 \ee
 
 \item \label{thm:diverging_spike_random_effects_in}
 If $\bbeta \sim N(0,\frac{\sigma^2_\beta}{p}I_p)$,
 \be 
 \liminf\limits_{n\rightarrow \infty:\atop p/n \rightarrow \gamma>0}  \mathbb{P}_{\delta=0}(\mathrm{LR}_{1,\into}>\chi^2_1(\alpha))>\alpha.
 \ee
\end{enumerate}
\end{theorem}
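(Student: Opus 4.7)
The plan is to reduce the analysis to the noncentrality parameter of a conditional chi-square distribution and to control it using sample-eigenvector estimates for the spiked model with $\lambda_1 = \Theta(p^{\tau_0})$, $\tau_0>0$. By the same calculation as in Lemma~\ref{lem:distribution_lr} (now conditioning on $\bbX$ rather than $(\mathbf{A},\bbW)$), one has under $H_0$
\[
\mathrm{LR}_{k,\into} \mid \bbX \sim \chi_1^2\bigl(\hat{\kappa}^2_{k,\into}\bigr),\qquad \hat{\kappa}^2_{k,\into} = \frac{\bigl(\bbeta^T \bbW^T \mathcal{A}_{k,\into}\bigr)^2}{\sigma_y^2\,\|\mathcal{A}_{k,\into}\|^2}.
\]
Because only a finite dimensional subspace is projected out, $\|\mathcal{A}_{k,\into}\|^2 = \Theta_{\mathbb{P}}(n)$, so the limiting Type I error is driven by the numerator above and the plan is to characterize its limiting behavior in each regime.

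\textbf{Part (i).} For $\bbeta\in S_{v_1}$ with $\|\bbeta\|\le M$, write $\bbeta = c\cdot v_1^{(\bbW)}$ where $v_1^{(\bbW)}$ denotes the $\bbW$-block of the population spike $v_1\in \mathbb{R}^{p+1}$ (suitably normalized). The signal $\bbW\bbeta$ then lies essentially along the leading population direction of $\bbX$. Classical results on spiked eigenvectors (Theorem~4.1 of \cite{bai2012gsp}) give $|\langle \hat v_1, v_1\rangle|^2 = 1-O(p^{-\tau_0})$, and the delocalization hypothesis $\inf_j|v_1(j)|,\sup_j|v_1(j)|=\Theta(1/\sqrt{p})$ guarantees this alignment is strong enough to imply $(I-P_{\mathcal{C}(\bbX\widetilde{V}_{k,\into})})\bbW\bbeta = o_{\mathbb{P}}(\sqrt{n})$ uniformly over $\bbeta$. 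Since $\mathcal{A}_{k,\into}$ itself lies in the same orthogonal complement, the inner product $\bbeta^T\bbW^T\mathcal{A}_{k,\into} = o_{\mathbb{P}}(\sqrt{n})$, whence $\hat{\kappa}^2_{k,\into}=o_{\mathbb{P}}(1)$ uniformly over $\bbeta\in S_{v_1}$ with $\|\bbeta\|\le M$. Together with uniform integrability of the chi-square tail, this yields $\mathrm{LR}_{k,\into}\Rightarrow \chi_1^2$ and the limiting Type I error equals exactly $\alpha$.

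\textbf{Part (ii).} For $\bbeta\sim N(0,\sigma_\beta^2 I_p/p)$, decompose $\bbeta = \bbeta_\parallel + \bbeta_\perp$ relative to the $\bbW$-restriction of $v_1$. The parallel component contributes vanishing noncentrality by the Part~(i) argument; the orthogonal component has squared norm $\approx \sigma_\beta^2(p-1)/p$ and is isotropic in a $(p-1)$-dimensional subspace orthogonal to $v_1$. The top $k$ sample PCs of $\bbX$ concentrate in $\mathrm{span}(v_1)$ plus asymptotically negligible random directions, so $(I-P_{\mathcal{C}(\bbX\widetilde{V}_{k,\into})})$ acts essentially as the identity on $\bbW\bbeta_\perp$. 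Substituting $\mathbf{A}=\bbW\btheta+\boldsymbol{\eta}$ from Assumption~\ref{assn:prop_projection} and taking the expectation over the Gaussian $\bbeta$, the numerator of $\hat{\kappa}^2_{k,\into}$ has mean proportional to $\mathcal{A}_{k,\into}^T \bbW\bbW^T\mathcal{A}_{k,\into}/p$, which concentrates at a strictly positive constant by the Mar\v{c}enko-Pastur law applied to the non-spike part of the sample spectrum of $\bbW$. Hence $\mathbb{E}[\hat\kappa^2_{k,\into}]\to C>0$, and strict monotonicity of $\mathbb{P}(\chi_1^2(\lambda)>t)$ in $\lambda$ together with uniform integrability gives $\liminf \mathbb{P}_{\delta=0}(\mathrm{LR}_{1,\into}>\chi_1^2(\alpha))>\alpha$.

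\textbf{Main obstacle.} The principal technical difficulty is that PCA is performed on $\bbX=[\mathbf{A},\bbW]$ rather than on $\bbW$ alone, so the sample eigenvectors mix the signal direction $\bbW\bbeta$ with the covariate $\mathbf{A}$ of interest, and the embedding of $\bbeta\in\mathbb{R}^p$ into the $\mathbb{R}^{p+1}$ spiked structure must be handled carefully. Delocalization of $v_1$ is crucial in Part~(i) so that bilinear forms $\hat v_1^T \bbW^T\bbeta$ concentrate at their population counterparts uniformly in $\bbeta$; in Part~(ii) one must additionally verify that $\bbeta_\perp$ is not inadvertently captured by the finitely many spurious non-spike sample PCs, which uses the fact that $k$ is fixed and the spike is isolated so that these extra directions behave like random unit vectors uncorrelated with any fixed $\bbeta$-direction.
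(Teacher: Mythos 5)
There is a genuine gap in your Part (i). Your argument establishes (correctly) that $\|(I-P_{\mathcal{C}(\bbX\widetilde{\mathbf{V}_{k,\into}})})\bbW\bbeta\|=o_{\P}(\sqrt{n})$ for $\bbeta\in S_{v_1}$, but the step from there to $\bbeta^T\bbW^T\mathcal{A}_{k,\into}=o_{\P}(\sqrt{n})$ does not follow: by Cauchy--Schwarz you only get $|\bbeta^T\bbW^T(I-P)\mathbf{A}|\leq \|(I-P)\bbW\bbeta\|\,\|(I-P)\mathbf{A}\|=o_{\P}(\sqrt{n})\cdot O_{\P}(\sqrt{n})=o_{\P}(n)$, which gives only $\hat{\kappa}^2_{k,\into}=o_{\P}(n)$, not $o_{\P}(1)$. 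To close this you must split $\mathbf{A}=\bbX\btheta_0+\boldsymbol{\eta}$ and treat the two pieces of the numerator separately: the noise piece $\boldsymbol{\eta}^T(I-P)\bbX\bbeta_0$ is conditionally centered Gaussian with variance $\sigma_g^2\|(I-P)\bbX\bbeta_0\|^2=o_{\P}(n)$, hence genuinely $o_{\P}(\sqrt{n})$; the systematic piece $\btheta_0^T\bbX^T(I-P)\bbX\bbeta_0$ requires control of $\|\btheta\|$. This is exactly where the delocalization hypothesis enters in the paper, and your proposal never uses it for its actual purpose: by Sherman--Morrison the induced coefficient is $\btheta=\frac{\lambda v(1)}{1+\lambda\|v_{-1}\|^2}v_{-1}$, so $\inf_j|v_1(j)|,\sup_j|v_1(j)|=\Theta(1/\sqrt{p})$ forces $\|\btheta\|=\Theta(1/\sqrt{p})$, which is what makes the systematic piece $o_{\P}(\sqrt{n})$. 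Invoking delocalization only to say the "alignment is strong enough" misses this; without the bound on $\|\btheta\|$ the systematic term need not be negligible at the $\sqrt{n}$ scale.

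Part (ii) is in the right spirit (the orthogonal component of a random $\bbeta$ produces a non-vanishing noncentrality driven by the bulk of the spectrum), but as written it is a first-moment argument: showing $\E_{\bbeta}\big[(\bbeta^T\bbW^T\mathcal{A}_{k,\into})^2\big]=\frac{\sigma_\beta^2}{p}\mathcal{A}_{k,\into}^T\bbW\bbW^T\mathcal{A}_{k,\into}$ is bounded below does not by itself show $\hat{\kappa}^2_{1,\into}$ is stochastically bounded away from $0$. You need either a variance/anti-concentration step or, as the paper does, the observation that conditionally on $(\bbX,\bbeta)$ the term $\boldsymbol{\eta}^T(I-P)\bbX\bbeta_0/\sqrt{n}$ is exactly Gaussian with variance $T_4=\frac1n\bbeta_0^T\bbX^T(I-P)\bbX\bbeta_0$, and then that $\E(T_4|\bbX)\geq C_1>0$ with $\mathrm{Var}(T_4|\bbX)\rightarrow 0$, so that $T_1/\sqrt{n}=\Omega_{\P}(1)$. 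Also note that your blanket claim $\|\mathcal{A}_{k,\into}\|^2=\Theta_{\P}(n)$ "because only a finite-dimensional subspace is projected out" is not a valid justification (the issue is whether $\mathbf{A}$ aligns with the projected-out directions); the correct route is again through the independent noise $\boldsymbol{\eta}$ in $\mathbf{A}$, which gives the lower bound needed in Part (i), while only the upper bound $T_2=O_{\P}(n)$ is needed in Part (ii).
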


The result shows Type-I error inflation persists for random effect regression. However, if the top eigenvector is decolalized, i.e., $\|v_1\|_\infty= \Theta(1/\sqrt{p})$, Type-I error is controlled at nominal level $\alpha$. This finding indicates that, in the context of diverging spikes, using matrix $A$ twice (once in the regression of $Y$ on $A$ and again for adjusting confounding through PCA) yields better control of Type-I error compared to $\mathrm{LR}_{k,\out}$. This contrasts with the existing intuition, which primarily focuses on the power of the procedures while overlooking Type-I error considerations~\cite{mai2021understanding,listgarten2012improved,yang2014advantages}.

\section{Numerical Experiments:}\label{sec:numerical_experiments} In this section, we present detailed numerical experiments to not only verify our theoretical results but also to provide additional evidence on the potential universality of the main narrative behind our results that persists beyond the working assumptions of our theoretical results.

 We first present results on tests based on $\mathrm{LR}_{k,\out}$. Our simulations are set up to explore the following features of PC adjustment based procedure for testing \eqref{eqn:main_hypo} under true underlying model \eqref{eqn:model_gaussian_linear_outcome_regression} by varying the following features: (i) marginal distribution of $\bW$ and associated strength of confounding as captured by the spiked strength in case of the generalized spiked model or a suitable notion of mixing strength for mixture models on $\bW$; (ii) conditional distribution of $A|\bW$; (iii) aspect ratio between the sample size and the number of confounding variables; (iv) the nature of the regression of $Y|A,\bW$ (i.e. fixed or random effects as driven by the nature of $\bbeta$ in model \eqref{eqn:model_gaussian_linear_outcome_regression}); and (v) the nature of the regression of $A|\bW$ (i.e. once again fixed or random effects as driven by the nature of $\btheta$ in model \eqref{assn:prop_projection}). Below we first provide a description of the variations that different types of cases we consider in this regard.

\begin{itemize}
    \item \textbf{Distribution of $\bW$:} We shall consider two broad sub-cases in this regard with further variations as follows.
    \begin{enumerate}
        \item \textbf{Spiked Model:} In this case we will consider $\bW_i\sim N(0,\Sigma)$ with $\Sigma=I+\sum_{j=1}^{k^*}\lambda_jv_jv_j^T$ with $\lambda_1\geq \ldots,\lambda_k^*>0$. We shall consider $k^*=1$, and $\lambda_1=p^{\tau_0}$ to regulate the spike strength, where $\tau_0\in\{0,0.05,0.10,\ldots,1\}$.
        
        \item \textbf{Mixture Model:} We shall consider two types mixture models: (i) (\textbf{Gaussian Mixture Model}) $\bW_i\sim \frac{1}{2}N(\bmu_1,I)+\frac{1}{2}N(\bmu_2,I)$ with $\bmu_i=(\mu_{i1},\ldots,\mu_{ip})$ for $i=1,2$, and $\mu_{1j}=-\mu_{2j}=1$ for $j=1,\ldots,m$ and $\mu_{1j}=\mu_{2j}=0$ for $j\geq m+1$; and (ii) (\textbf{Binomial Mixture Model}) $\bW_i=(W_{i1},\ldots,W_{ip})$ with $W_{ij}\sim \frac{1}{2}\mathrm{Bin}(2,q_{1j})+\frac{1}{2}\mathrm{Bin}(2,q_{2j})$ with $q_{1j}=0.3$, $q_{2j}=0.7$ for $j=1,\ldots,m$ and $q_{1j}=q_{2j}=0.5$ for $j\geq m+1$. For this model, we vary $m=\lceil{p^{\tau_0}\rceil}$ to regulate the strength of the population stratification, where  $\tau_0\in\{0,0.05,0.10,\ldots,1\}$, and $\lceil\cdot\rceil$ denotes the ceiling function.
    \end{enumerate}
    
    \item \textbf{Regression of $A|\bW$:} We shall consider two cases: (i) (\textbf{Continuous Exposure}) $A_i\in \mathbb{R}$ with $A_i=\bW_i^T\btheta+\eta_i$ where $\eta_i\sim N(0,\sigma_{a}^2)$; and (ii) (\textbf{Binomial Exposure}) $A_i\sim \mathrm{Bin}(2,p_i)$ with $p_i=\frac{\exp(\btheta^T\bW_i)}{1+\exp(\btheta^T\bW_i)}$. We take $\sigma_a^2=1$ for our simulations.

    \item \textbf{Aspect Ratio:} We shall consider two different aspect ratios $\gamma\in\{0.5,2\}$ between the sample size and the number of confounding variables in our simulations. In all our simulations, we shall let $p=1000$ (and thus the sample size $n\in\{500,2000\}$).
    
    \item \textbf{Regression Coefficients for $Y|A,\bW$:} We shall consider both fixed and random effects for $\bbeta$ in the model \eqref{eqn:model_gaussian_linear_outcome_regression}. When $\bbeta$ is a fixed effect, we shall vary the angle/distance metric $d(\bbeta,S_{v_1})$ by setting $\bbeta=av_1+\sqrt{1-a^2} v_2$, where $v_1$ and $v_2$ are the eigenvectors corresponding to the largest and second largest eigenvalues of $\E(\bW^\top \bW/n)$, and $a=1-p^{-\tau_0}$. The specification of $\tau_0$ remains the same as described in the context of varying spike strength. When $\bbeta$ is a random effect, $\bbeta$ is drawn randomly from $N_p(0,1/pI)$.
    
    \item \textbf{Regression Coefficients for $A,\bW$:} We shall consider both fixed and random effects for $\btheta$ in the model \eqref{assn:prop_projection}. The specifications of $\btheta$ as fixed and random effects are exactly the same as described above for $\bbeta$.
\end{itemize}
We now present our numerical experiments which delves into combinations of the setups presented above. We consider three different combinations of the distributional assumptions of $\bW$ and $A|\bW$ -- (1) Spiked Model on $\bW$ and continuous exposure $A$; (2) Gaussian mixture model on $\bW$ and and continuous exposure $A$; and (3) Binomial mixture model on $\bW$ and binomial exposure $A$. In each of these cases and for all combinations of different aspect ratios and different specifications of $\bbeta$ and $\btheta$, we simulate the outcomes under the null model (i.e. $\delta=0$ in \eqref{eqn:model_gaussian_linear_outcome_regression}) by taking $\sigma^2_{\varepsilon}=1$. We performed 2000 replications of the dataset and tested $\varphi_{k,\out}(\chi_1^2(\alpha))$ on each dataset by taking $k=1$ and $\alpha=0.05$. 
Finally, we plot the empirical type I error rate of the test across varying $\tau_0$ in Figure~\ref{fig:out_divspike}.
\begin{figure}
\centering
\begin{subfigure}{5 cm}
    \includegraphics[width=5 cm]{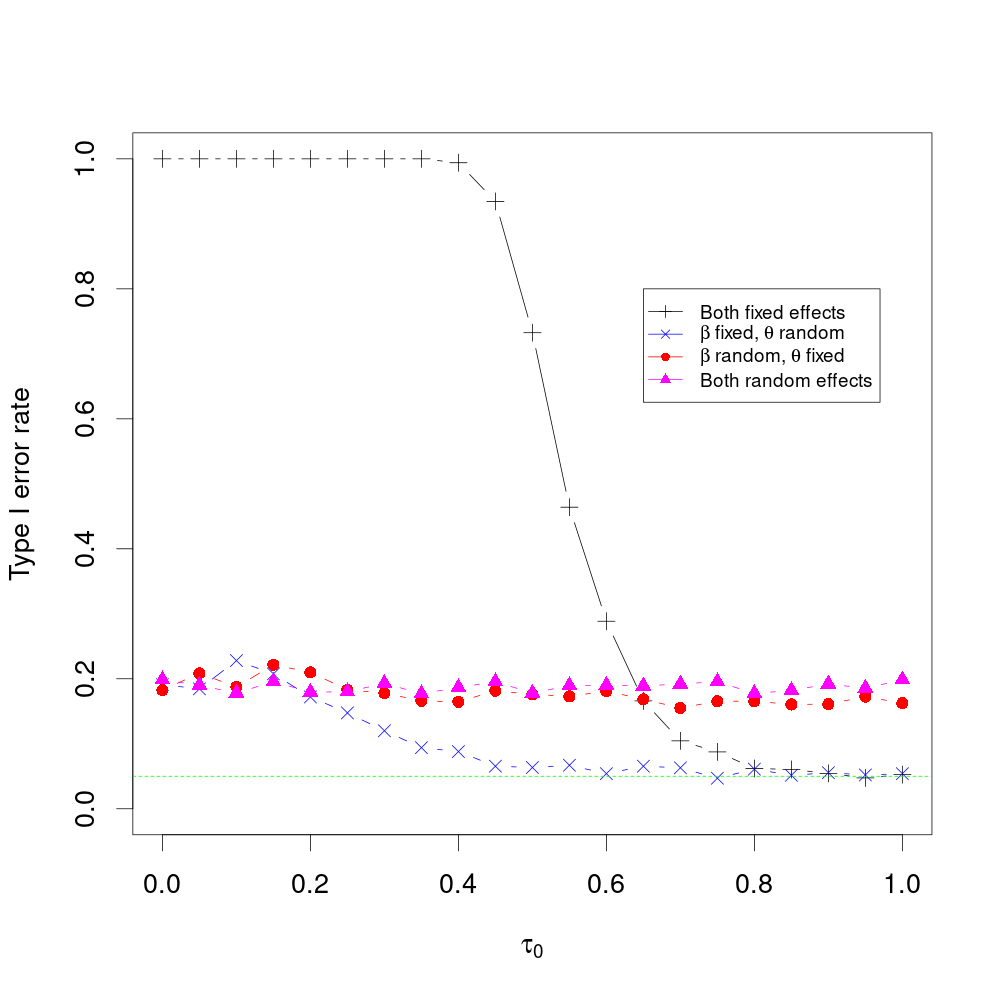}
    \caption{Spiked model, continuous exposure\\($\gamma=2$)}
    \label{fig:divspike_1000_500_gauss_spike_FALSE}
\end{subfigure}
\hfill
\begin{subfigure}{5 cm}
    \includegraphics[width=5 cm]{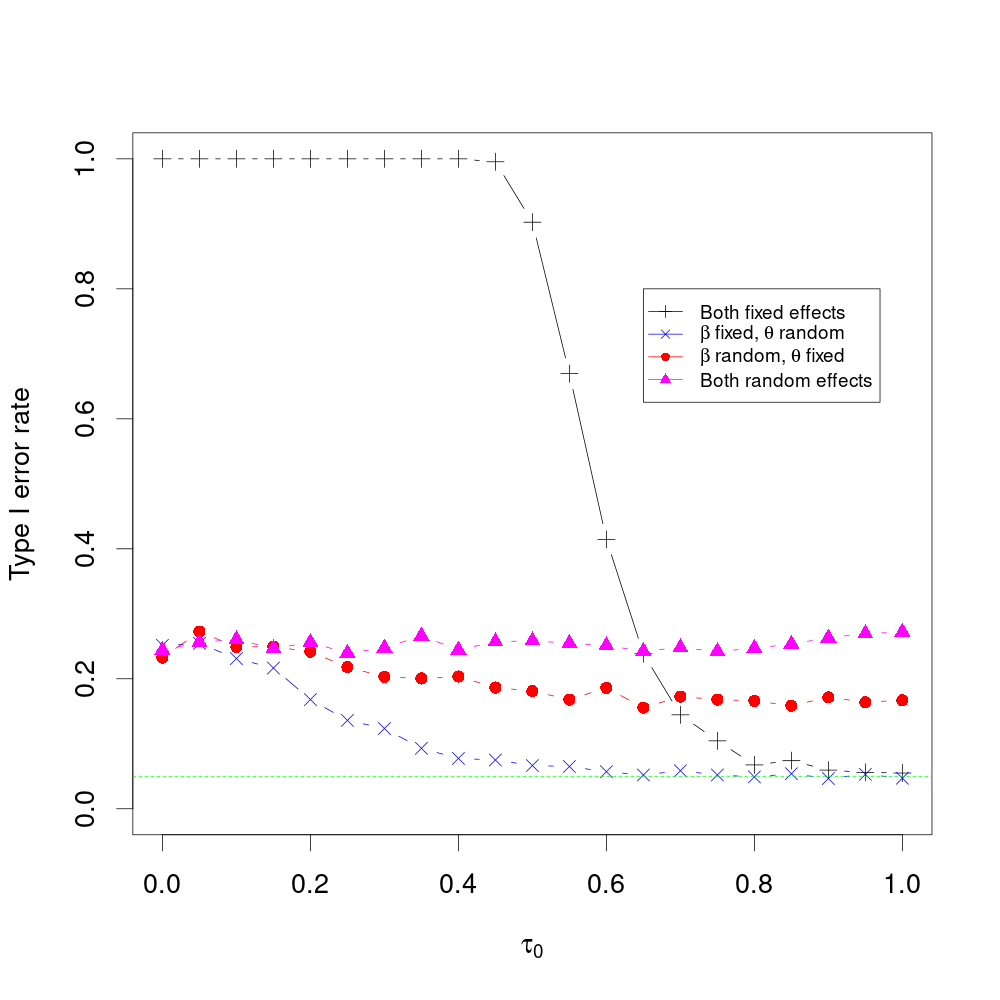}
    \caption{Spiked model, continuous exposure\\($\gamma=0.5$)}
    \label{fig:divspike_1000_2000_gauss_spike_FALSE}
\end{subfigure}
\hfill
\begin{subfigure}{5 cm}
    \includegraphics[width=5 cm]{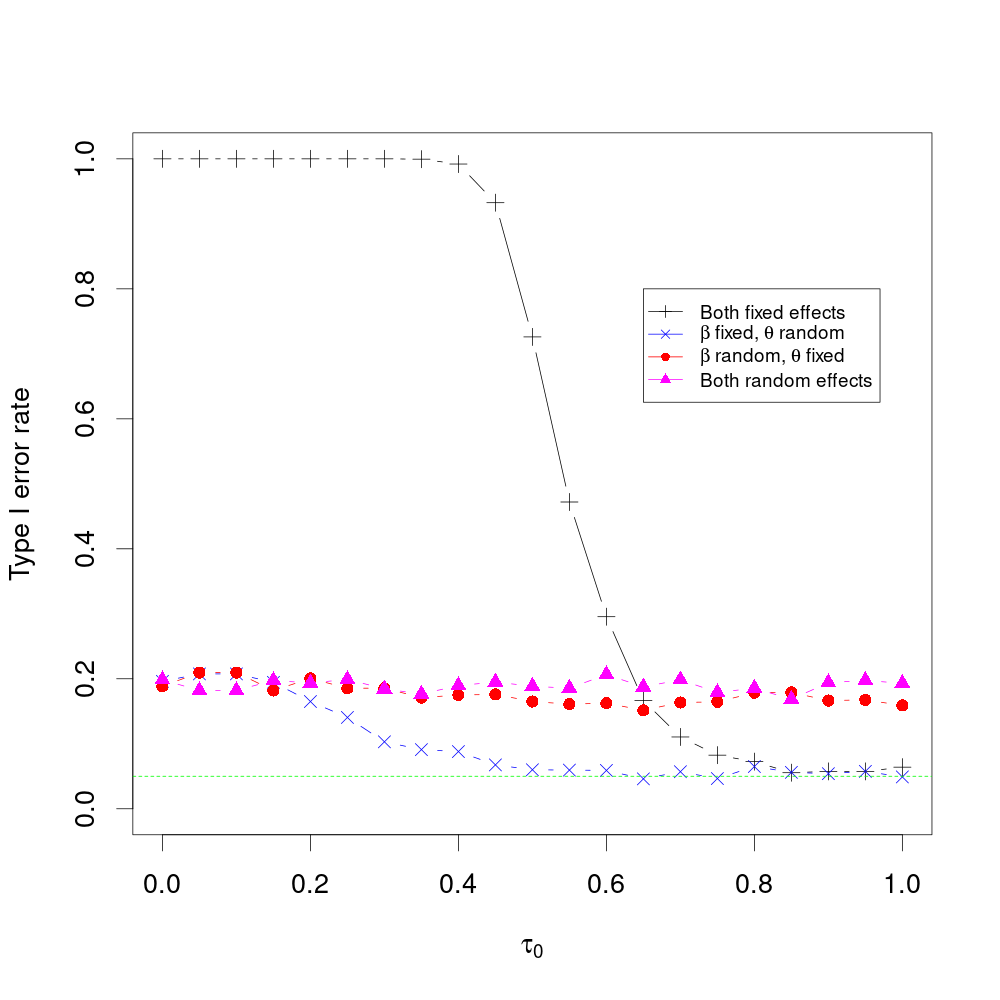}
    \caption{Gaussian mixture, continuous exposure\\($\gamma=2$)}
    \label{fig:divspike_1000_500_gauss_mix_FALSE}
\end{subfigure}
\hfill
\begin{subfigure}{5 cm}
    \includegraphics[width=5 cm]{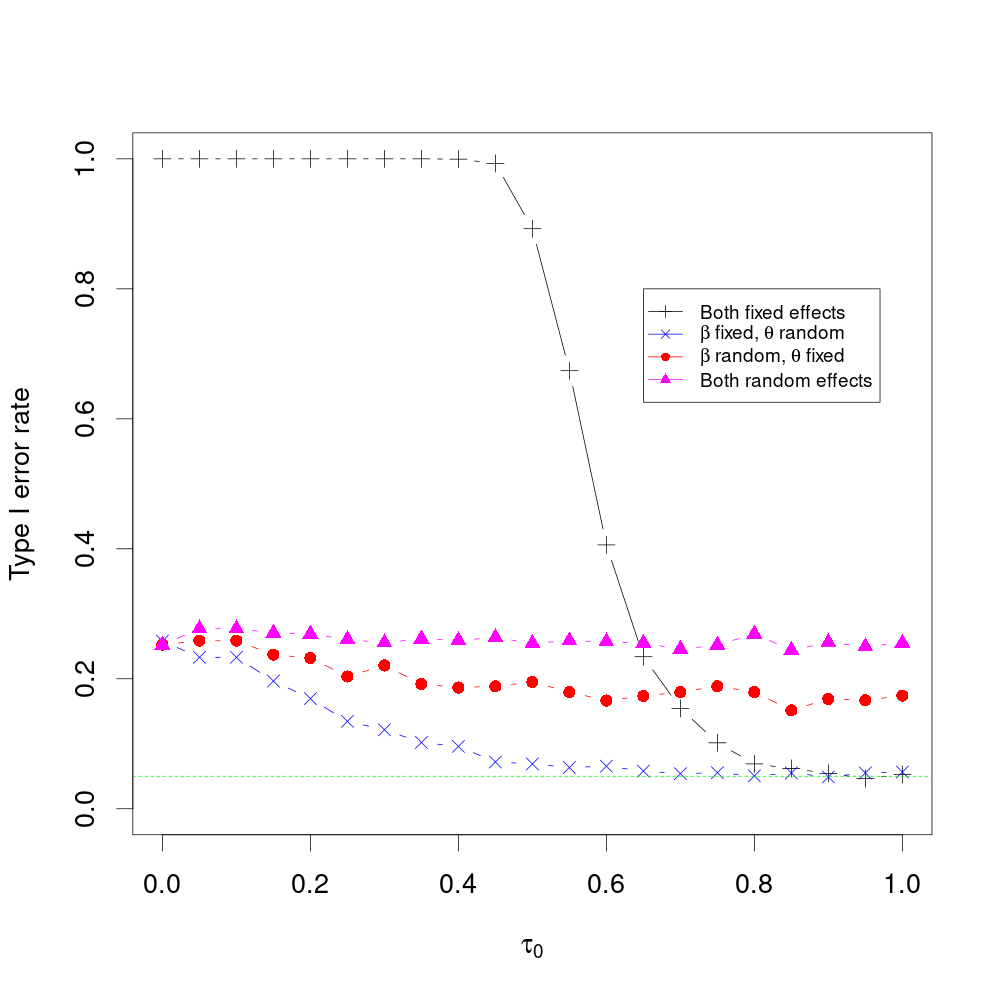}
    \caption{Gaussian mixture, continuous exposure\\($\gamma=0.5$)}
    \label{fig:divspike_1000_2000_gauss_mix_FALSE}
\end{subfigure}
\hfill
\begin{subfigure}{5 cm}
    \includegraphics[width=5 cm]{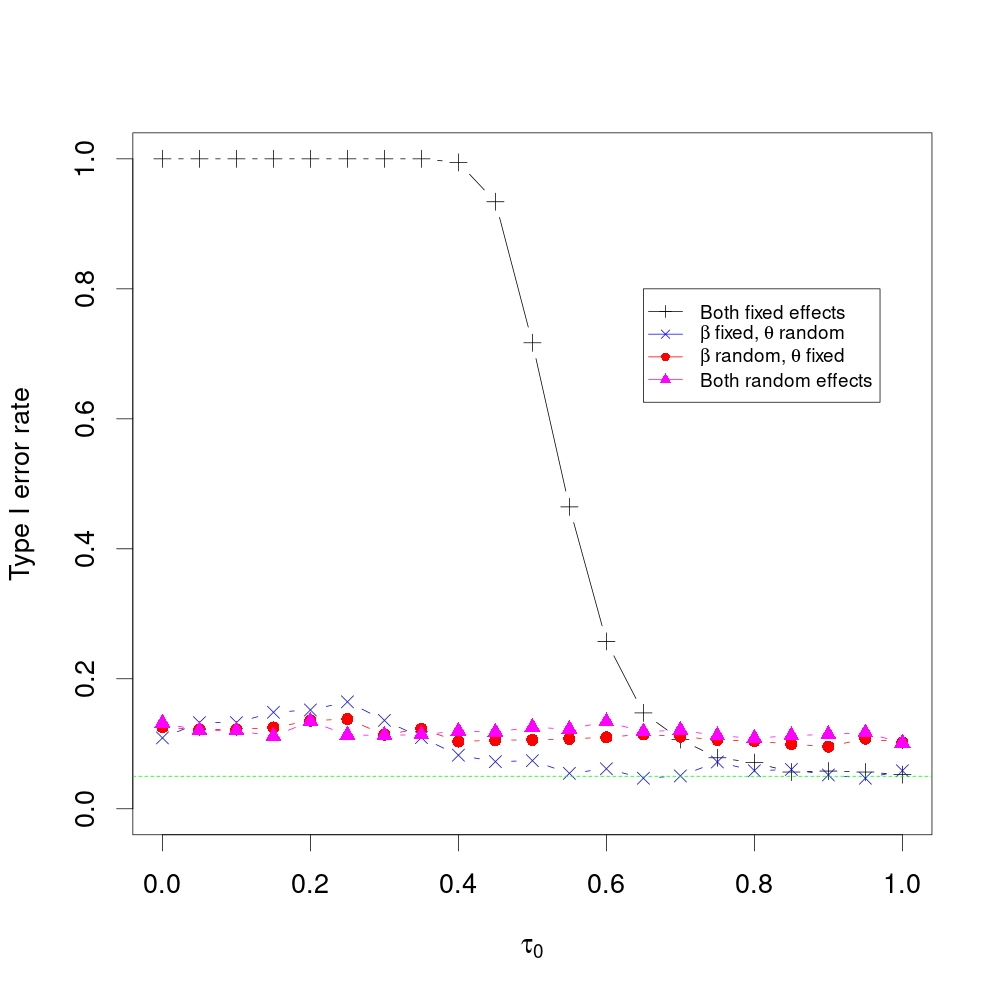}
    \caption{Binomial mixture, binomial exposure\\($\gamma=2$)}
    \label{fig:divspike_1000_500_bin_mix_FALSE}
\end{subfigure}
\hfill
\begin{subfigure}{5 cm}
    \includegraphics[width=5 cm]{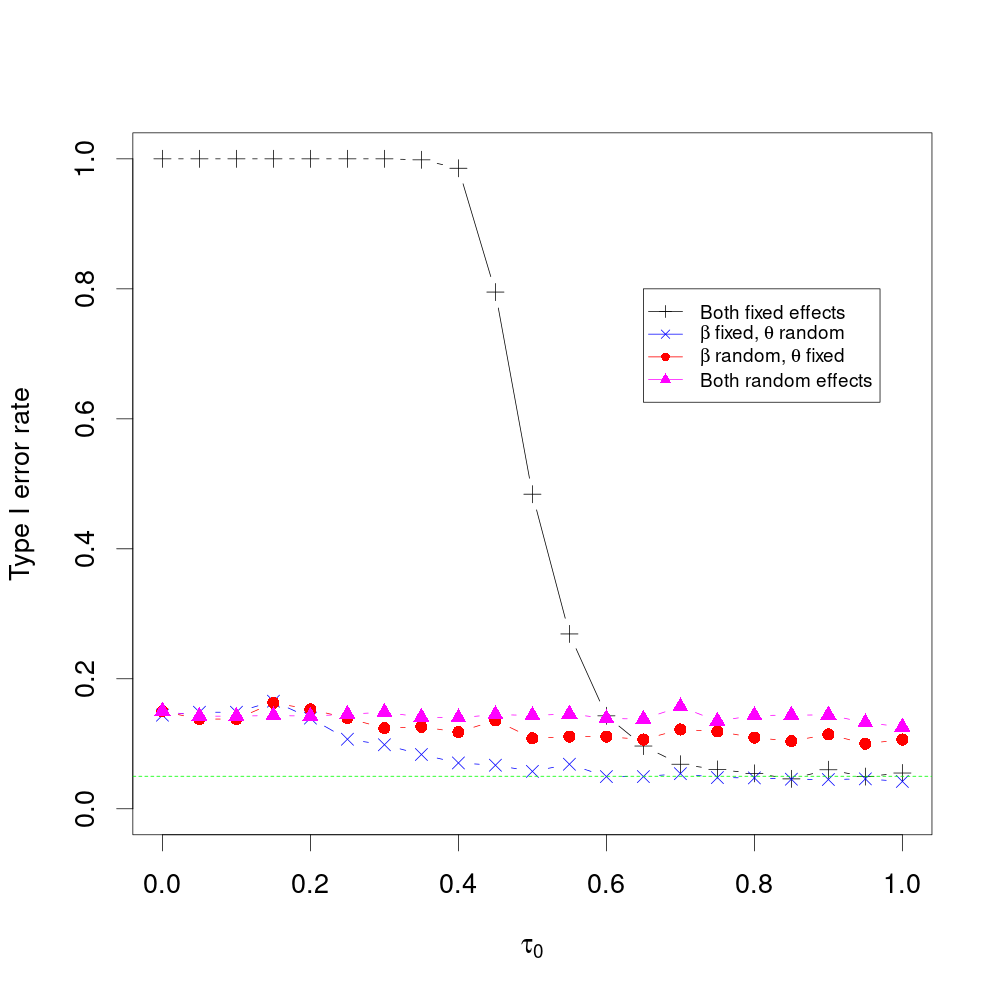}
    \caption{Binomial mixture, binomial exposure\\($\gamma=0.5$)}
    \label{fig:divspike_1000_2000_bin_mix_FALSE}
\end{subfigure}
        
\caption{Simulation results in out-regression for different marginal distributions of $\bW$, conditional distributions of $A|\bW$, aspect ratios, regression coefficients, and strength of the spike (or population stratification)}
\label{fig:out_divspike}
\end{figure}

In Figure~\ref{fig:out_divspike}, the left and the right columns present the results based on the two different aspect ratios, $\gamma=2$ and $0.5$, respectively. From top to bottom, the three rows represent simulation setups (1), (2), and (3) as described above. In each of the plots, the black line corresponds to both $\bbeta$ and $\btheta$ being fixed effects, the blue line corresponds to $\bbeta$ being a fixed effect and $\btheta$ being a random effect, the red line corresponds to $\bbeta$ being a random effect and $\btheta$ being a fixed effect, and the pink line corresponds to both $\bbeta$ and $\btheta$ being random effects. The results verify the claims made in Theorem \ref{thm:diverging_spike_out}, as well as Theorems \ref{thm:spiked_model_fixed_strength_out},\ref{thm:mixture_model_fixed_strength} when $\tau_0=0$.

Next, we present results on tests based on $\mathrm{LR}_{k,in}$. Below we provide a description of different aspects of the simulation procedure that we will consider.
\begin{itemize}
    \item \textbf{Distribution of $\bX$:} We consider a spiked model for $\bX$. In particular, $\bX_i\sim N(0,\Sigma)$ with $\Sigma=I+\sum_{j=1}^{k^*}\lambda_jv_jv_j^T$ with $\lambda_1\geq \ldots,\lambda_k^*>0$. We shall set $k^*=1$, and $\lambda_1=p^{\tau_0}$ to regulate the spike strength, where $\tau_0\in\{0,0.05,0.10,\ldots,1\}$. We  generate $v_1$ by drawing randomly from $N_p(0,1/pI)$ and normalizing afterwards. Next, we shall partition $\bX=\left [A:\bW \right ]$, i.e., take the first column of $\bX$ as $A$ and the rest as $\bW$.

    \item \textbf{Aspect Ratio:} We shall consider two different aspect ratios $\gamma\in\{0.5,2\}$ between the sample size and the number of confounding variables in our simulations. In all our simulations, we shall let $p=1000$ (and thus the sample size $n\in\{500,2000\}$.
    
    \item \textbf{Regression Coefficients for $Y|A,\bW$:} We shall consider both fixed and random effects for $\bbeta$ in the model \eqref{eqn:model_gaussian_linear_outcome_regression}. When $\bbeta$ is a fixed effect, we shall set $\bbeta=\Sigma_{(22)}^{-1}\Sigma_{(21)}$, where $\Sigma_{(21)},\Sigma_{(22)}$ are defined based on the partition of $\Sigma$,
    $$\Sigma=\begin{bmatrix}\Sigma_{(11)} & \Sigma_{(12)}\\ \Sigma_{(21)} & \Sigma_{(22)} \end{bmatrix}.$$
    Here $\Sigma_{(11)}$ is a scalar. When $\bbeta$ is a random effect, $\bbeta$ is drawn randomly from $N_p(0,\frac1p I)$.
\end{itemize}
For 
different aspect ratios and  specifications of $\bbeta$, we simulate the outcomes under the null model ( $\delta=0$ in \eqref{eqn:model_gaussian_linear_outcome_regression}) by taking $\sigma^2_{\varepsilon}=1$. We performed 2000 replications of the dataset and tested $\varphi_{k,in}(\chi_1^2(\alpha))$ on each dataset by taking $k=1$ and $\alpha=0.05$. Finally, we plot the empirical type I error rate of the test across varying $\tau_0$ in Figure~\ref{fig:in_divspike}.
\begin{figure}
\centering
\begin{subfigure}{6.5 cm}
    \includegraphics[width=6.5 cm]{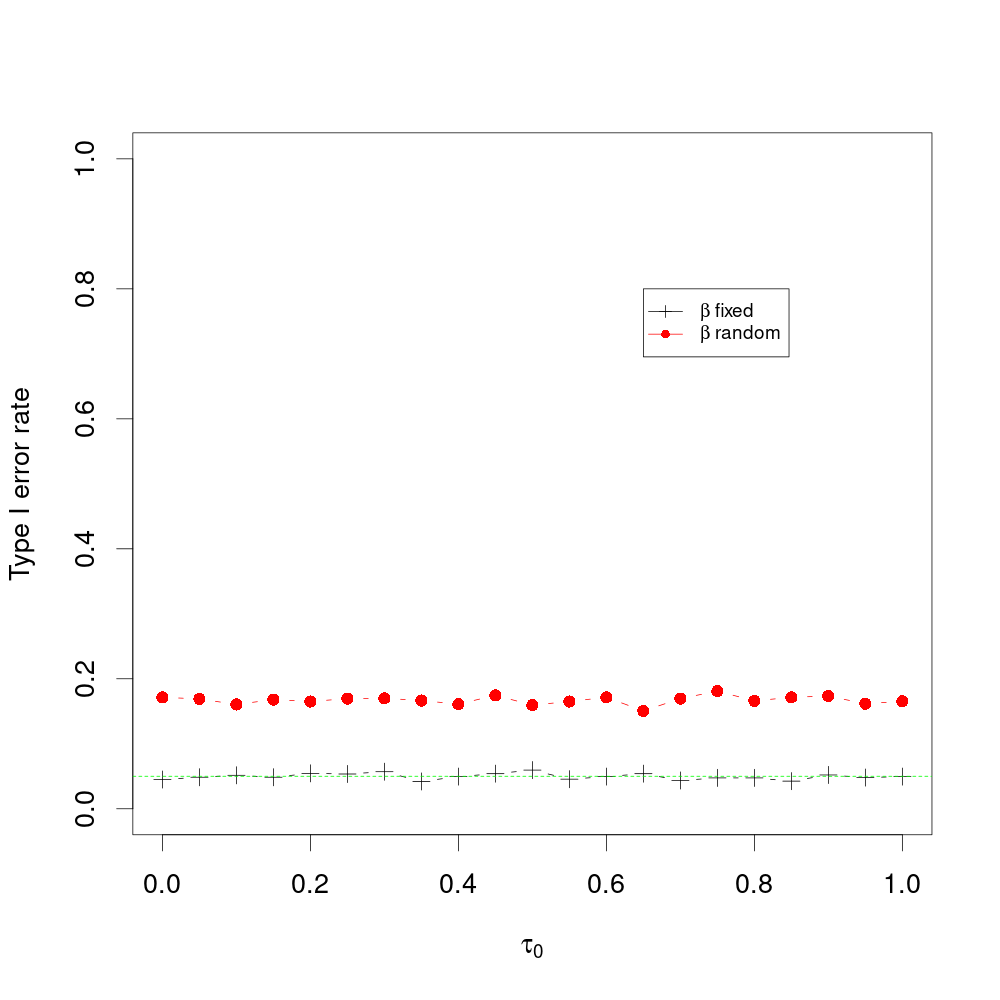}
    \caption{$\gamma=2$}
    \label{fig:divspike_1000_500_gauss_spike_TRUE_beta_is_theta}
\end{subfigure}
\hfill
\begin{subfigure}{6.5 cm}
    \includegraphics[width= 6.5 cm]{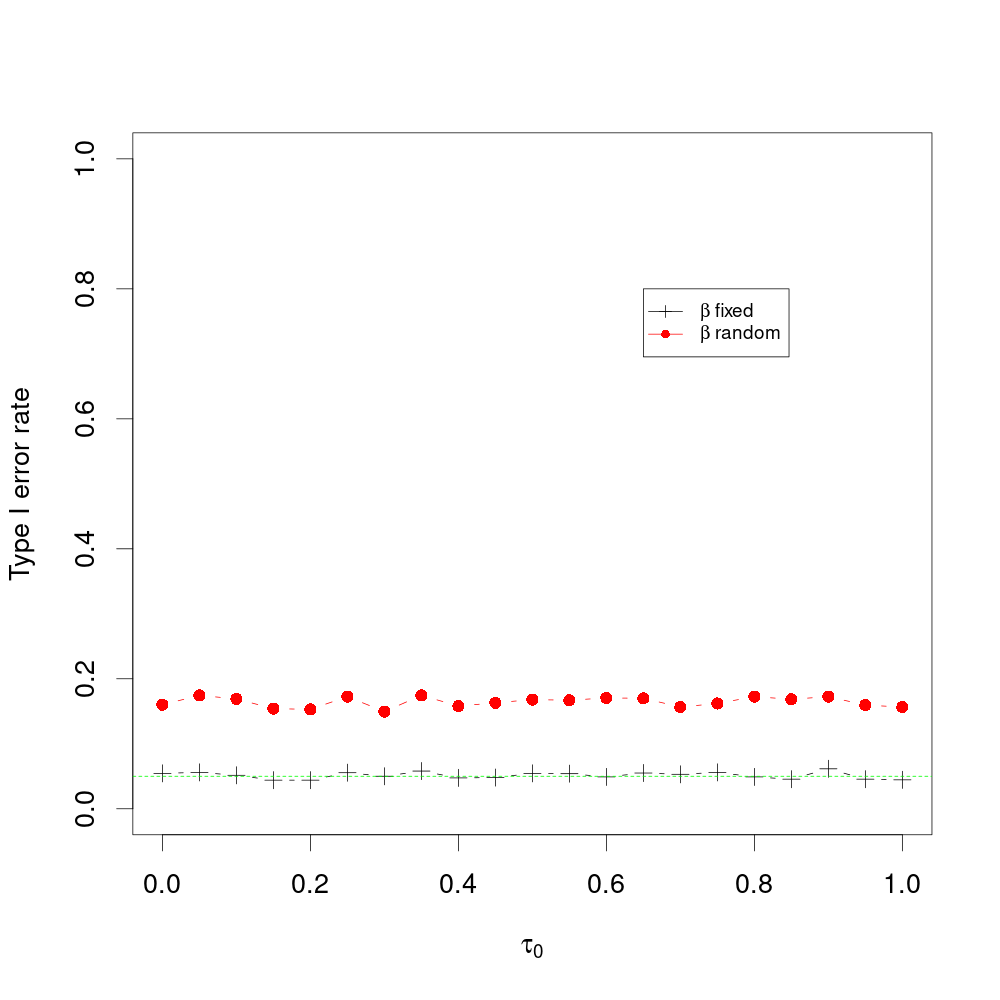}
    \caption{$\gamma=0.5$}
    \label{fig:divspike_1000_2000_gauss_spike_TRUE_beta_is_theta}
\end{subfigure}
        
\caption{Simulation results in in-regression for different marginal distributions of $\bX$, aspect ratios, regression coefficients, and spike strengths}
\label{fig:in_divspike}
\end{figure}

Figures~\ref{fig:divspike_1000_500_gauss_spike_TRUE_beta_is_theta} and \ref{fig:divspike_1000_2000_gauss_spike_TRUE_beta_is_theta} present the results based on the two different aspect ratios, $\gamma=2$ and $0.5$, respectively. In each of the plots, the black line corresponds to $\bbeta$ being fixed effects, and the red line corresponds to $\bbeta$ being random effects. The results verify the claims made in Theorem \ref{thm:diverging_spike_delocalized}.

\section{Discussions} In this paper we consider hypothesis testing of individual parameters in high dimensional linear regression set up under proportional asymptotics while adjusting for additional confounding through principal component analysis. Our analyzed method is regularly employed in large-scale genetic and epigenetic association studies.  Here, we take some initial steps to shed light on necessary and sufficient conditions for the validity of this procedure. Our results show, in fact, such a popular method can often have inflated type-I error causing inaccurate downstream inference.

Here, we list several possible extensions and important future research directions:

\begin{enumerate}
    \item[(i)] Moving beyond proportional asymptotic regime: Our results rely crucially on the explicit calculation of the regime $p \propto n$. However, many of our results of this article can be extended to the ultra-high dimensions $p \gg n$ under appropriate signal strength by understanding behavior of bilinear forms of sample eigenvectors (cf. \cite[Theorem 1]{paul2012augmented} for examples for such analysis).  
    \item[(ii)] Beyond mixture models: In Theorem \ref{thm:mixture_model_fixed_strength}, we investigate the behavior of likelihood ratio under the mixture models, which is a crucial first-step of understanding PC regression beyond spiked covariance models. However, it is important to analyze $\mathrm{LR}_{k,\ind}$ for other genetic admixture models ~\cite{price2006principal}, which are a generalization of mixture models in
which each data point is generated from multiple components.
\item[(iii)] Fixing the type-I error inflation:  Our results show, often for both fixed and diverging spike strength regimes, the inflated type-I error renders the PC regression impractical. A natural next step is to understand the behavior of alternative methods in high-dimension to provide a guideline for practitioners. To this end, we plan to analyze supervised PC-regression \cite{bair2006prediction} in proportional asymptotic regime for a future work.
\item[(iv)] Extension to GLMs: Often, in GWAS, $A|\bW$ follows a multiple logistic regression with Hardy-Weinberg equilibrium proportions depending through the logistic link on $\bW$. It will be interesting to understand the behavior of likelihood ratio tests beyond linear regression.

\item[(v)] Other directions: Our paper shows type-I error inflation making the traditional cut-offs useless. One might be interested in constructing corrections to the cut-offs for $\mathrm{LR}_{k,\ind}$ in regimes where the Type-I error of the tests is inflated. Finally, it will be interesting to perform analogous analysis for case-control studies.
\end{enumerate}


\bibliographystyle{plainnat}
\bibliography{biblio_pca}

\newpage

\section{Proofs of Main Results}\label{sec:proofs}
Throughout our proofs whenever the context is clear, with an abuse of notation, we shall write the SVD of both $\bbW$ and $\bbX$ as $\sum_{j=1}^{n \wedge p} \hat{d}_j \hat{u}_j\hat{v_j}^T$ and use them accordingly in the context of $\mathrm{LR}_{\out}$ and  $\mathrm{LR}_{\into}$ respectively. Also, denote by $\hat{\lambda}_j=\hat{d}^2_j/n$ the eigenvalues of $\frac{1}{n}\bbW^\top \bbW$ or $\frac{1}{n}\bbX^\top \bbX$ respectively.

\subsection{\bf Proof of Theorem \ref{thm:spiked_model_fixed_strength_out}}
We only prove the result for $\mathrm{LR}_{k,\out}$ and note that the proof for $\mathrm{LR}_{k,\into}$ follows by similar arguments by simply replacing $\bbeta,\btheta$ by $\tilde{\bbeta}:=\tilde{I}_{-1}\bbeta$ and  $\tilde{\btheta}:=\tilde{I}_{-1}\btheta$ respectively with $\tilde{I}_{-1}:=[e_2:\cdots:e_p]$ and $e_j$ denotes the $j^{\rm th}$ canonical basis of $\mathbb{R}^{p+1}$. 

We begin by noting that by Lemma \ref{lem:distribution_lr} one has
\be
    \mathrm{LR}_{k, \rm out}|\mathbf{A},\mathbb{W}\sim \chi_1^2(\hat{\kappa}^2_{k,\rm out}),\quad  \hat{\kappa}^2_{k,\rm out}=\frac{\left((\bbeta^T\mathbb{W}^\top+\mathbf{A}^\top \delta)\left(I-{P}_{\mathcal{C}(\mathbb{W}\widetilde{\mathbf{V}}_{k,\rm out})}\right)\mathbf{A}\right)^2}{\mathbf{A}^\top\left(I-{P}_{\mathcal{C}(\mathbb{W}\widetilde{\mathbf{V}}_{k,\rm out})}\right)\mathbf{A}}.
\ee
Therefore, we have,
$$\mathbb{P}_{\delta=0}(\mathrm{LR}_{k,\out}> t_\alpha)=\mathbb{E}\Big(\bar{\Phi}(t_\alpha-\hat{\kappa}_{k,\out}^2)-\Phi(-t_\alpha-\hat{\kappa}_{k,\out}^2)\Big)^2,$$
where $\Phi$ is the standard normal cdf and $\bar{\Phi}=1-\Phi$. Hence, the rest of the proof involves obtaining asymptotic distribution of $\hat{\kappa}_{k,\out}^2$, which coupled with uniform integrability yields the asymptotic behavior of the likelihood ratio test.

\subsubsection*{\bf Proof of Theorem \ref{thm:spiked_model_fixed_strength_out}\ref{thm:both_fixed_fixed_strength_out_reg}}
The non-centrality parameter $\hat{\kappa}_{k,\out}^2$ under $H_0$ is given by
\be\label{eq:null_kap}
    \hat{\kappa}_{k,\out}^2=(\mathbb{W} \bbeta)^\top P_{\mathcal{C}(\mathcal{A}_k)} (\mathbb{W} \bbeta) =\frac{\left(\bbeta^\top \mathbb{W}^\top\left(I-{P}_{\mathcal{C}(\mathbb{W}\widetilde{\mathbf{V}}_{k,\rm out})}\right)\mathbf{A}\right)^2}{\mathbf{A}^\top\left(I-{P}_{\mathcal{C}(\mathbb{W}\widetilde{\mathbf{V}}_{k,\rm out})}\right)\mathbf{A}}:=\frac{T^2_1}{T_2}.
\ee
{\color{black} Since we only deal with $H_0$, we shorten the notation $\P_{\delta=0}$ by $\P$.} We analyze the behavior of $T_2$ first. Note that the coordinates of $A_i$ are i.i.d. with finite $\psi_2$-norm (this follows from Assumption \ref{assn:prop_projection} and \eqref{eqn:model_gaussian_linear_outcome_regression}). Therefore by Bernstein's inequality, $\exists$ $C_1,C_1'>0$ such that $\|\mathbf{A}\|^2_2 \le C_1n$ w.p. $\ge 1- e^{-C_1'n}$. Hence, as $\|I-{P}_{\mathcal{C}(\mathbb{W}\widetilde{\mathbf{V}}_{k,\rm out})}\| \le 1$, we have,
\[
\frac{T_2}{n} \le \frac{\|\mathbf{A}\|^2_2}{n} \le C_1,
\]
w.p. $\ge 1- e^{-C_1'n}$. 

Next we show that there exists a constant $C_2$ and a pair of vectors $\bbeta,\btheta$ such that  with probability converging to $1$ one has $T_1^2= T^2_1(\bbeta,\btheta) \geq 4C^2_2n^2$. To this end note that for any $\bbeta,\btheta$,
\be
    T_1=\bbeta^\top \mathbb{W}^\top\left(I-{P}_{\mathcal{C}(\mathbb{W}\widetilde{\mathbf{V}}_{k,\rm out})}\right)\mathbb{W}\btheta+ \bbeta^\top \mathbb{W}^\top\left(I-{P}_{\mathcal{C}(\mathbb{W}\widetilde{\mathbf{V}}_{k,\rm out})}\right) \boldsymbol{\eta}:= T_{11}+T_{12}. \label{eq:num_null}
\ee
Now, we will show that with high probability both of the following hold: (a) $|T_{11}| \ge 2C_2 n$ and (b) $|T_{12}| = o_{\P} (n)$ which yields the desired result.

 To show (a) note that for any $\bbeta=\btheta$, one has 
\be
  T^2_{11}&= \left( \btheta^\top \mathbb{W}^\top \left(I-{P}_{\mathcal{C}(\mathbb{W}\widetilde{\mathbf{V}}_{k,\rm out})}\right) \mathbb{W} \btheta\right)^2= \left(\sum_{j=k+1}^{n\wedge p}\hat{d}^2_j\btheta^\top\hat{v_j}\hat{v}_j^\top\btheta\right)^2= n^2 \left(\sum_{j=k+1}^{n\wedge p}\hat{\lambda}_j\btheta^T\hat{v_j}\hat{v}_j^T\btheta\right)^2.
\ee
Now using Lemma \ref{lem:spikes_non_spikes}, there exists $\|\btheta\|_2^2 =1$ and $C_2>0$ with  $$\sum_{j=k+1}^{n\wedge p}\hat{\lambda}_j\btheta^T\hat{v_j}\hat{v}_j^T\btheta \rightarrow 2C_2 >0$$ a.s. under $\P$. Hence, for this choice of $\bbeta=\btheta$, $T^2_{11} \ge 4C^2_2 n^2$ with probability converging to $1$. 

To show (b) note that $\E(T_{12}|\bbW)=0$ since $\E(\boldsymbol{\eta})=0$, and for $\bbeta=\btheta$ one has
\[
\text{Var}\Big(\frac{1}{n}T_{12}|\bbW\Big)=\frac{1}{n^2} \E\left[\btheta^\top \mathbb{W}^\top \left(I-{P}_{\mathcal{C}(\mathbb{W}\tilde{\mathbf{V}_k})}\right)\mathbb{W} \btheta|\bbW\right]\leq \hat{\lambda}_{k+1}^2/n
\]
yielding $\frac1n T_{12}= o_{\P}(1)$ since $\hat{\lambda}_{k+1}^2=O_{\P}(1)$ by \cite{shen2013surprising}. This shows $\hat{\kappa}_{k,\out}^2 = \Omega(n)$, thereby completes the proof of Theorem \ref{thm:spiked_model_fixed_strength_out}\ref{thm:both_fixed_fixed_strength_out_reg}. 

\subsubsection*{\bf Proof of Theorem \ref{thm:spiked_model_fixed_strength_out}\ref{thm:one_fixed_fixed_strength_out_reg}}
Here, we have assumed $\bbeta \sim N\Big(0,\frac{\sigma^2_\beta}{p}I_p\Big)$ and $\btheta \in \R^p$ be a fixed vector with $\|\btheta\| \le M$ for some fixed $M>0$. Fix some $h\neq 0$ and $\delta:=\frac{h}{\sqrt{n}}$. {\color{black} We shorten the notation $\P_{\delta=\frac{h}{\sqrt{n}}}$ by $\P$.} Recall that, the non-centrality parameter of the likelihood ratio test under alternative $\delta$ is $\hat{\kappa}^2_{k,\out}$ given by Lemma \ref{lem:distribution_lr}:
\be\label{eq:kap}
    \hat{\kappa}_{k,\out}^2=\frac{\left((\bbeta^T\mathbb{W}^\top+\mathbf{A}^\top \delta)\left(I-{P}_{\mathcal{C}(\mathbb{W}\widetilde{\mathbf{V}}_{k,\rm out})}\right)\mathbf{A}\right)^2}{\mathbf{A}^\top\left(I-{P}_{\mathcal{C}(\mathbb{W}\widetilde{\mathbf{V}}_{k,\rm out})}\right)\mathbf{A}} :=\frac{T^2_1}{T_2}.
\ee
Similar to the previous part, we will compute asymptotic distribution of $\hat{\kappa}_{k,\out}^2$ and thereby the result follows by uniform integrability. 

To this end, we will first show $T_2/n$ converges, in probability, to a positive constant. Note that by Assumption \ref{assn:prop_projection} we have $\mathbf{A}|\mathbb{W} \sim N(\mathbb{W} \btheta, \sigma^2_g I)$. Hence, 
\be\label{eq:theta_1quad}
    \mathbb{E}\left(\frac{T_2}{n}|\mathbb{W}\right) =\frac{1}{n} \btheta^\top \mathbb{W}^\top \left(I-{P}_{\mathcal{C}(\mathbb{W}\widetilde{\mathbf{V}}_{k,\rm out})}\right) \mathbb{W} \btheta+ \frac{\sigma^2_g}{n} \text{Tr}\left(I-{P}_{\mathcal{C}(\mathbb{W}\widetilde{\mathbf{V}}_{k,\rm out})}\right)
\ee
For the second summand above, by definition, $\text{Tr}({P}_{\mathcal{C}(\mathbb{W}\widetilde{\mathbf{V}}_{k,\rm out})})= \text{rank}({P}_{\mathcal{C}(\mathbb{W}\widetilde{\mathbf{V}}_{k,\rm out})}) \le k=o(n)$. Hence, $$\sigma^2_g \ge \frac{\sigma^2_g}{n} \text{Tr}\left(I-{P}_{\mathcal{C}(\mathbb{W}\widetilde{\mathbf{V}}_{k,\rm out})}\right)\ge (1-o(1))\sigma^2_g \Longrightarrow \lim\limits_{n \rightarrow \infty}\frac{\sigma^2_g}{n} \text{Tr}\left(I-{P}_{\mathcal{C}(\mathbb{W}\widetilde{\mathbf{V}}_{k,\rm out})}\right)= \sigma^2_g.$$
Using SVD of $\mathbb{W}$,
\be \label{eq:one_random_denom_mean}
    \frac{1}{n} \btheta^\top \mathbb{W}^\top \left(I-{P}_{\mathcal{C}(\mathbb{W}\widetilde{\mathbf{V}}_{k,\rm out})}\right) \mathbb{W} \btheta= \frac{1}{n}\sum_{j=k+1}^{n\wedge p}\hat{d}^2_j\btheta^\top\hat{v_j}\hat{v}_j^\top\btheta= \sum_{j=k+1}^{n\wedge p}\hat{\lambda}_j\btheta^T\hat{v_j}\hat{v}_j^T\btheta \rightarrow c_0
\ee
a.s. under $\P$, by \eqref{eq:non-spike-mom-i-call} where the specific form of $c_0$ is provided in Lemma \ref{lem:spikes_non_spikes}. Hence, $\mathbb{E}\left(\frac{T_2}{n}|\mathbb{W}\right) \rightarrow c_0 +\sigma^2_g$, a.s. under $\P$. Further,
\be
    \mathrm{Var}\left(\frac{T_2}{n}|\mathbb{W}\right)= \frac{2 \sigma^4_g}{n^2} \text{Tr} \left(I-{P}_{\mathcal{C}(\mathbb{W}\widetilde{\mathbf{V}}_{k,\rm out})}\right)+ \frac{4 \sigma^2_g}{n^2} \btheta^\top \mathbb{W}^\top \left(I-{P}_{\mathcal{C}(\mathbb{W}\widetilde{\mathbf{V}}_{k,\rm out})}\right) \mathbb{W} \btheta.
\ee
The second summand of RHS converges to $0$ a.s. under $\P$ by \eqref{eq:one_random_denom_mean} and the first quantity is deterministic and $O\big(\frac{2 \sigma^4_g}{n}\big)=o(1)$. So,
\begin{equation}\label{eq:t2}
    \frac{T_2}{n} \xrightarrow{\P} c_0+\sigma^2_g,
\end{equation}
unconditionally on $\mathbb{W}$ using Dominated Convergence Theorem.

Next, we derive asymptotic behavior of $T_1/\sqrt{n}$, where $T_1$ is defined by \eqref{eq:kap}. Recall that, $\mathbf{A}= \mathbb{W} \btheta +\boldsymbol{\eta}$ and $$T_1= \bbeta^T\mathbb{W}^\top \left(I-{P}_{\mathcal{C}(\mathbb{W}\widetilde{\mathbf{V}}_{k,\rm out})}\right)\mathbf{A} + \delta \mathbf{A}^\top \left(I-{P}_{\mathcal{C}(\mathbb{W}\widetilde{\mathbf{V}}_{k,\rm out})}\right)\mathbf{A}.$$
Since $\delta= h/\sqrt{n}$, the second summand is:
$$\frac{\delta}{\sqrt{n}}\mathbf{A}^\top  \left(I-{P}_{\mathcal{C}(\mathbb{W}\widetilde{\mathbf{V}}_{k,\rm out})}\right)\mathbf{A}=\frac{h}{n}\mathbf{A}^\top  \left(I-{P}_{\mathcal{C}(\mathbb{W}\widetilde{\mathbf{V}}_{k,\rm out})}\right)\mathbf{A} \xrightarrow{\P} h(c_0 +\sigma^2_g),$$
using \eqref{eq:t2}. Next, define 
\be\label{eq:defn_t3}
    T_3:=\mathbf{A}^\top \left(I-{P}_{\mathcal{C}(\mathbb{W}\widetilde{\mathbf{V}}_{k,\rm out})}\right)\mathbb{W}\mathbb{W}^\top \left(I-{P}_{\mathcal{C}(\mathbb{W}\widetilde{\mathbf{V}}_{k,\rm out})}\right) \mathbf{A}.
\ee
Conditioned on $\mathbb{W}, \mathbf{A}$, 
\be\label{eq:beta_normality}
    \frac{\bbeta^T\mathbb{W}^\top \left(I-{P}_{\mathcal{C}(\mathbb{W}\widetilde{\mathbf{V}}_{k,\rm out})}\right)\mathbf{A}/ \sqrt{n}}{\sqrt{T_3/np}} \sim N(0,\sigma^2_\beta).
\ee
We will show  $\frac{T_3}{np}$ converges, in probability, to a positive constant. To this end note that,
\be\label{eq:theta_quad}
    \ \mathbb{E}\left(\frac{T_3}{np}|\mathbb{W}\right) &= \frac{\sigma^2_g}{np} \text{Tr}\left(\left(I-{P}_{\mathcal{C}(\mathbb{W}\widetilde{\mathbf{V}}_{k,\rm out})}\right)\mathbb{W}\mathbb{W}^\top\left(I-{P}_{\mathcal{C}(\mathbb{W}\widetilde{\mathbf{V}}_{k,\rm out})}\right)\right) \nonumber \\ &+ \frac{1}{np} \btheta^\top \mathbb{W}^\top\left(I-{P}_{\mathcal{C}(\mathbb{W}\widetilde{\mathbf{V}}_{k,\rm out})}\right)\mathbb{W}\mathbb{W}^\top\left(I-{P}_{\mathcal{C}(\mathbb{W}\widetilde{\mathbf{V}}_{k,\rm out})}\right)\mathbb{W} \btheta.
\ee
The first quantity of RHS equals 
\begin{align}\label{eq:trx}
    \frac{\sigma^2_g}{np} \text{Tr}\left(\mathbb{W}^\top \left(I-{P}_{\mathcal{C}(\mathbb{W}\widetilde{\mathbf{V}}_{k,\rm out})}\right)\mathbb{W}\right) &= \frac{\sigma^2_g}{np} \text{Tr}\left(\sum_{j=k+1}^{n \wedge p} \hat{d_j}^2 \hat{v_j}\hat{v_j}^\top\right)\nonumber\\
    &= \frac{\sigma^2_g}{p}\sum_{j=k+1}^{n \wedge p} \hat{\lambda_j} \rightarrow \sigma^2_g m_1,
\end{align}
a.s. under $\P$ using Lemma \ref{lem:lmnt}. The second summand in \eqref{eq:theta_quad} equals \be\label{eq:trx2}
    \frac{1}{np} \sum_{j=k+1}^{n \wedge p} \hat{d_j}^4 \btheta^T\hat{v_j}\hat{v_j}^T\btheta = (1+o(1))\frac{1}{\gamma}\rightarrow c_4 >0,
\ee
a.s. under $\P$ using Lemma \ref{lem:spikes_non_spikes} for some $c_4>0$. Finally,
\be\label{eq:tr2x}
    \ & \mathrm{Var}\left(\frac{T_3}{np}|\mathbb{W}\right)\\ &= \frac{2\sigma^4_g}{n^2p^2} \text{Tr}\left(\left(\mathbb{W}^\top \left(I-{P}_{\mathcal{C}(\mathbb{W}\widetilde{\mathbf{V}}_{k,\rm out})}\right)\mathbb{W}\right)^2\right)+ \frac{4\sigma^2_g}{n^2p^2}\btheta^T \left(\mathbb{W}^\top\left(I-{P}_{\mathcal{C}(\mathbb{W}\widetilde{\mathbf{V}}_{k,\rm out})}\right)\mathbb{W}\right)^3 \btheta \nonumber\\
    &= \frac{2\sigma^4_g}{n^2p^2}\sum_{j=k+1}^{n \wedge p} \hat{d_j}^4+ \frac{4\sigma^2_g}{n^2p^2} \sum_{j=k+1}^{n \wedge p} \hat{d_j}^6\btheta^\top\hat{v_j}\hat{v_j}^T\btheta \nonumber\\
    &\leq \frac{2\sigma^4_g}{p^2}\sum_{j=k+1}^{n \wedge p} \hat{\lambda_j}^2+ \frac{4\sigma^2_g n}{p^2} \sum_{j=k+1}^{n \wedge p} \hat{
    \lambda_j}^3\btheta^\top\hat{v_j}\hat{v_j}^\top\btheta \rightarrow 0.
\ee
 a.s. under $\P$, where the first summand is $o(1)$ by Lemma \ref{lem:lmnt} and the second summand is $o(1)$ using \eqref{eq:non-spike-mom-ii-call} and $\hat{
    \lambda}_{k+1}=O_{\P}(1)$.
Hence, $\frac{T_3}{np} {\xrightarrow{\P}} \sigma^2_g m_1+c_4$ unconditionally using Dominated Convergence Theorem. By Slutsky's theorem \eqref{eq:beta_normality} becomes: 
$$\frac{T_1}{\sqrt{n}} \xrightarrow{D} N(  h(c_0 +\sigma^2_g), \sigma^2_\beta(\sigma^2_g m_1+c_4)).$$
This, coupled with \eqref{eq:t2}, implies
\begin{align}\label{eq:theta_fixed_ncp}
    \hat{\kappa}^2_{k,\out} \xrightarrow{D} \frac{\sigma^2_\beta(\sigma^2_g m_1+c_4)}{c_0 +\sigma^2_g}\chi^2(h^2(c_0 +\sigma^2_g)^2),
\end{align}
where $\hat{\kappa}^2_{k,\out}$ is defined by \eqref{eq:kap}.

Next consider both $\bbeta, \btheta$ random effects, i.e., $\btheta \sim N\big(0,\frac{\sigma^2_\theta}{p} I_p\big)$ and $\bbeta \in \R^p$ be fixed.
Recall that the denominator of $\hat{\kappa}_{k,\out}^2$ is $T_2= \mathbf{A}^\top\left(I-{P}_{\mathcal{C}(\mathbb{W}\widetilde{\mathbf{V}}_{k,\rm out})}\right)\mathbf{A}$ where $\mathbf{A}|\mathbb{W} \sim N(0,\frac{\sigma^2_\theta}{p}\mathbb{W}\mathbb{W}^\top+\sigma^2_g I_p)$. Hence
\be
    \mathbb{E}\left(\frac{T_2}{n}|\mathbb{W}\right)&=\frac{1}{n} \text{Tr}\left(\frac{\sigma^2_\theta}{p}\mathbb{W}\mathbb{W}^\top\left(I-{P}_{\mathcal{C}(\mathbb{W}\widetilde{\mathbf{V}}_{k,\rm out})}\right)+\sigma^2_g\left(I-{P}_{\mathcal{C}(\mathbb{W}\widetilde{\mathbf{V}}_{k,\rm out})}\right)\right)\\
    &=\sigma^2_\theta \frac1p \Big( \sum_{j=k+1}^{n\wedge p}\hat{\lambda}^2_j \Big)+ \sigma^2_g (1+o(1)) \rightarrow \sigma^2_\theta m_2+ \sigma^2_g,
\ee
a.s. $\P$ using Lemma \ref{lem:lmnt}. Moreover, 
\be
   \ & \mathrm{Var}\left(\frac{T_2}{n}|\mathbb{X}\right)\\
   &=\frac{2}{n^2}\text{Tr}\left(\left(I-{P}_{\mathcal{C}(\mathbb{W}\widetilde{\mathbf{V}}_{k,\rm out})}\right)\Big(\frac{\sigma^2_\theta}{p}\mathbb{W}\mathbb{W}^\top+\sigma^2_g I_p\Big)\left(I-{P}_{\mathcal{C}(\mathbb{W}\widetilde{\mathbf{V}}_{k,\rm out})}\right)\Big(\frac{\sigma^2_\theta}{p}\mathbb{W}\mathbb{W}^\top+\sigma^2_g I_p\Big)\right)\rightarrow 0,
\ee
a.s. under $\P$ by proof technique similar to the proof of \eqref{eq:tr2x}. Hence, $\frac{T_2}{n} \xrightarrow{\P} \sigma^2_\theta m_2+\sigma^2_g$. To show asymptotic normality of $\frac{T_1}{\sqrt{n}}$, note that $$\frac{\beta^\top \mathbb{W}^\top \left(I-{P}_{\mathcal{C}(\mathbb{W}\widetilde{\mathbf{V}}_{k,\rm out})}\right)\mathbf{A}}{\sqrt{V_0}}\xrightarrow{D} N(0,1),$$ where
\be
    V_0:= \bbeta^\top \mathbb{W}^\top \left(I-{P}_{\mathcal{C}(\mathbb{W}\widetilde{\mathbf{V}}_{k,\rm out})}\right)\left(\frac{\sigma^2_\theta}{p}\mathbb{W}\mathbb{W}^\top+\sigma^2_g I_p\right)\left(I-{P}_{\mathcal{C}(\mathbb{W}\widetilde{\mathbf{V}}_{k,\rm out})}\right)\mathbb{W}\bbeta.
\ee
Now, observe that, $\frac{V_0}{n}\xrightarrow{ \P} C_1$, for some $C_1>0$ by calculation similar to \eqref{eq:theta_1quad} and \eqref{eq:theta_quad}. Finally 
\[
\frac{\delta}{\sqrt{n}}\mathbf{A}^\top\left(I-{P}_{\mathcal{C}(\mathbb{X}\widetilde{\mathbf{V}}_{k,\rm out})}\right)\mathbf{A}=\frac{h}{n}\mathbf{A}^\top\left(I-{P}_{\mathcal{C}(\mathbb{X}\widetilde{\mathbf{V}}_{k,\rm out})}\right)\mathbf{A} \xrightarrow{\P} h(\sigma^2_\theta m_1+\sigma^2_g).
\] Hence, $\frac{T_1}{\sqrt{n}}\xrightarrow{D} N(h(\sigma^2_\theta m_2+\sigma^2_g),C_1)$ yielding
\be\label{eq:beta_fixed_ncp}
    \hat{\kappa}^2_{k,\out} \xrightarrow{D} \frac{C_1}{\sigma^2_\theta m_2+\sigma^2_g}\chi^2(h^2(\sigma^2_\theta m_2+\sigma^2_g)^2).
\ee
Finally consider $\bbeta \sim N\Big(0,\frac{\sigma^2_\beta}{p}I_p\Big)$ and $\btheta \sim N \Big(0,\frac{\sigma^2_\theta}{p}I_p\Big)$. Once again we want to find the asymptotic distribution of $\hat{\kappa}^2_{k,\out}$, where $\hat{\kappa}^2_{k,\out}$ was defined as \eqref{eq:kap}. To this end, observe that, $T_2$ is free of $\bbeta$ hence by the previous case, we immediately obtain $\frac{T_2}{n} \xrightarrow{\P} \sigma^2_\theta m_2+\sigma^2_g$. \par 
To derive the asymptotic distribution of $T_1$, we will show $\frac{T_3}{np}$ converges, in probability, to a positive constant, where $T_3$ is defined in \eqref{eq:defn_t3}. Note that,
\be
\mathbb{E}\left(\frac{T_3}{np}|\mathbb{W},\btheta\right) &= \frac{\sigma^2_g}{np} \text{Tr}\left(\left(I-{P}_{\mathcal{C}(\mathbb{W}\widetilde{\mathbf{V}}_{k,\rm out})}\right)\mathbb{W}\mathbb{W}^\top\left(I-{P}_{\mathcal{C}(\mathbb{X}\widetilde{\mathbf{V}}_{k,\rm out})}\right)\right)\\&+ \frac{1}{np} \btheta^\top \left(\mathbb{W}^\top\left(I-{P}_{\mathcal{C}(\mathbb{W}\widetilde{\mathbf{V}}_{k,\rm out})}\right)\mathbb{W}\right)^2 \btheta
\ee
The first term in the RHS converges a.s. to $\sigma^2_g m_1$ using \eqref{eq:trx}. Denoting the second term on the RHS by $T_4$, note
\be
    \mathbb{E}(T_4|\mathbb{W})= \frac{\sigma^2_\theta}{np^2} \text{Tr}\left(\left(\mathbb{W}^\top \left(I-{P}_{\mathcal{C}(\mathbb{W}\widetilde{\mathbf{V}}_{k,\rm out})}\right)\mathbb{W}\right)^2\right)=(1+o(1))\frac{\sigma^2_\theta}{\gamma p}\sum_{j=k+1}^{n \wedge p} \hat{\lambda_j}^2 \xrightarrow{\text{a.s.}\P} \frac{\sigma^2_\theta m_2}{\gamma},
\ee
using Lemma \ref{lem:lmnt} and
\be
    \mathrm{Var}(T_4|\mathbb{W})=\frac{2\sigma^4_\theta}{n^2p^4}\text{Tr}\left(\left(\mathbb{W}^\top \left(I-{P}_{\mathcal{C}(\mathbb{W}\widetilde{\mathbf{V}}_{k,\rm out})}\right)\mathbb{W}\right)^4\right)=\frac{1}{n^2p^4}\sum_{j=k+1}^{n \wedge p} \hat{d_j}^8 \xrightarrow{\text{a.s.}\P} 0.
\ee
Therefore, $$\frac{T_1}{\sqrt{n}} \xrightarrow{D} N(  h(\sigma^2_\theta m_1+\sigma^2_g), \sigma^2_\beta(\sigma^2_g m_1+ \sigma^2_\theta m_2)).$$
Hence, using Slutsky's theorem,
\be\label{eq:both_random_ncp}
    \hat{\kappa}^2_{k,\out}\xrightarrow{D} \frac{\sigma^2_\beta(\sigma^2_g m_1+ \sigma^2_\theta m_2)}{\sigma^2_\theta m_1 +\sigma^2_g}\chi^2(h^2(\sigma^2_\theta m_1+\sigma^2_g)^2),
\ee
concluding the proof of the theorem for $\mathrm{LR}_{k,\out}$.

\subsection{\bf Proof of Theorem \ref{thm:thm:spiked_model_fixed_strength_out_nogaussian_a}}
We divide the proof according to the parts of the theorem.
\subsubsection*{\bf Proof of Theorem \ref{thm:thm:spiked_model_fixed_strength_out_nogaussian_a}\ref{spiked_model_fixed_strength_out_nogaussian_a_out}}

We once again begin by noting that by Lemma \ref{lem:distribution_lr} one has
\be
    \mathrm{LR}_{k, \rm out}|\mathbf{A},\mathbb{W}\sim \chi_1^2(\hat{\kappa}^2_{k,\rm out}),\quad  \hat{\kappa}^2_{k,\rm out}=\frac{\left((\bbeta^T\mathbb{W}^\top+\mathbf{A}^\top \delta)\left(I-{P}_{\mathcal{C}(\mathbb{W}\widetilde{\mathbf{V}}_{k,\rm out})}\right)\mathbf{A}\right)^2}{\mathbf{A}^\top\left(I-{P}_{\mathcal{C}(\mathbb{W}\widetilde{\mathbf{V}}_{k,\rm out})}\right)\mathbf{A}}.
\ee 
Therefore, we will only analyze $\hat{\kappa}^2_{k,\rm out}$. The denominator of $\hat{\kappa}^2_{k,\rm out}$ behaves as follows: Since projection contracts norm and $A$ is sub-Gaussian we immediately have
\begin{align*}
    \mathbf{A}^T(I-P_{\mathcal{C}(\mathbb{W}\widetilde{\mathbf{V}}_{k,\out})})\mathbf{A}\leq \|\mathbf{A}\|_2^2\leq cn
\end{align*}
with high probability for some constant $c>0$.
To lower bound the numerator of $\hat{\kappa}_{k,\out}^2$ we consider a specific instance where $A\perp \mathbf{W}$ 
 which implies that
\be
T_1:=\mathbf{A}^T(I-P_{\mathcal{C}(\mathbb{W}\widetilde{\mathbf{V}}_{k,\out})})\mathbb{W}\bbeta\bbeta^T\mathbb{W}^T(I-P_{\mathcal{C}(\mathbb{W}\widetilde{\mathbf{V}}_{k,\out})})\mathbf{A}
\ee
has mean
\be 
\E(T_1|\mathbb{W})&=\mathrm{Tr}\left((I-P_{\mathcal{C}(\mathbb{W}\widetilde{\mathbf{V}}_{k,\out})})\mathbb{W}\bbeta\bbeta^T\mathbb{W}^T(I-P_{\mathcal{C}(\mathbb{W}\widetilde{\mathbf{V}}_{k,\out})})\right)\\
&=\bbeta^T\mathbb{W}^T(I-P_{\mathcal{C}(\mathbb{W}\widetilde{\mathbf{V}}_{k,\out})})(I-P_{\mathcal{C}(\mathbb{W}\widetilde{\mathbf{V}}_{k,\out})})\mathbb{W}\bbeta.\\
\ee 
Also, since coordinates of $\mathbf{A}$ are sub-Gaussian we have by sub-Gaussian moment bounds that for some constant $C>0$
\be 
\mathrm{Var}\left(T_1|\mathbb{W}\right)\leq C \left(\bbeta^T\mathbb{W}^T (I-P_{\mathcal{C}(\mathbb{W}\widetilde{\mathbf{V}}_{k,\out})})\mathbb{W}\bbeta\right)^2
\ee
Therefore by Paley-Zygmund Inequality one has for some $0<\zeta<1$ 
\be 
\P(T_1\geq \zeta \E(T_1|\mathbb{W})|\mathbb{W})\geq \zeta
\ee
Now note that $\bbeta^T\mathbb{W}^T(I-P_{\mathcal{C}(\mathbb{W}\widetilde{\mathbf{V}}_{k,\out})})(I-P_{\mathcal{C}(\mathbb{W}\widetilde{\mathbf{V}}_{k,\out})})\mathbb{W}\bbeta
=n\sum_{j=k+1}^{n\wedge p}\hat{\lambda}_j^2\langle \bbeta,\hat{v}_j\rangle^2$ and that by \eqref{eq:non-spike-mom-ii-call} we have $\sum_{j=k+1}^{n\wedge p}\hat{\lambda}_j^2\langle \bbeta,\hat{v}_j\rangle^2\stackrel{\P}{\rightarrow} c'_0>0$ for some $c'_0>0$. This completes the proof of the desired result.



\subsubsection*{\bf Proof of Theorem \ref{thm:thm:spiked_model_fixed_strength_out_nogaussian_a} \ref{spiked_model_fixed_strength_random_effects_out_nogaussian_a_out}}
The proof follows same steps as the previous part. By the Paley-Zygmund Inequality, it is again enough to that there exists $c_0>0$ such that $\frac1n \bbeta^T\mathbb{W}^T(I-P_{\mathcal{C}(\mathbb{W}\widetilde{\mathbf{V}}_{k,\out})})\mathbb{W}\bbeta \stackrel{\P}{\rightarrow} c_0$. To this end note that,
\begin{align*}\label{eq:subg_random_mean}
\frac1n \E \Big(\bbeta^T\mathbb{W}^T(I-P_{\mathcal{C}(\mathbb{W}\widetilde{\mathbf{V}}_{k,\out})})\mathbb{W}\bbeta \Big)= \frac {\sigma^2_\beta}{p} \sum_{j=k+1}^{n \wedge p} \hat{\lambda}_j \rightarrow m_1 \sigma^2_\beta>0,
\end{align*}
by Lemma \ref{lem:lmnt}. Also,
\[
\text{Var} \Big( \frac1n \bbeta^T\mathbb{W}^T(I-P_{\mathcal{C}(\mathbb{W}\widetilde{\mathbf{V}}_{k,\out})})\mathbb{W}\bbeta \Big) = \frac{ \sigma^4_\beta}{n^2 p} \text{Tr} \Big(\mathbb{W}^T(I-P_{\mathcal{C}(\mathbb{W}\widetilde{\mathbf{V}}_{k,\out})})\mathbb{W}\Big)^2 \rightarrow 0.
\]
This yields the desired conclusion.

\subsubsection*{\bf Proof of Theorem \ref{thm:thm:spiked_model_fixed_strength_out_nogaussian_a}\ref{spiked_model_fixed_strength_out_nogaussian_a_in}}
Recall that, $I_{-1}=[e_2:\dots:e_p]$ and $\mathbb{W}=\mathbb{X} I_{-1}$ and $\mathbf{A}=\mathbb{X}e_1$. By Lemma \ref{lem:distribution_lr},
\begin{align}\label{eq:nogaussian_in_kappa}
    \hat{\kappa}_{k,\into}^2=\frac{\left(\bbeta^\top I^\top_{-1}\mathbb{X}^\top\left(I-{P}_{\mathcal{C}(\mathbb{X}\tilde{\mathbf{V}_{k,\into}})}\right)\mathbb{X}e_1\right)^2}{e^{\top}_1\mathbb{X}^\top\left(I-{P}_{\mathcal{C}(\mathbb{X}\tilde{\mathbf{V}_{k,\into}})}\right)\mathbb{X}e_1}
\end{align}

By Lemma \ref{lem:spikes_non_spikes},
\begin{align*}
 \hat{\kappa}^2_{k,\into}& = (1+o(1))n \frac{\left(\sum_{j=1}^{n \wedge p} \phi_{1j}\bbeta^{\top}_0 v_j v^\top_j e_1\right)^2}{\sum_{j=1}^{n\wedge p} \phi_{1j}e^{\top}_1 v_j v^\top_j e_1}=(1+o(1)) \frac{1}{\gamma}\frac{(\sqrt{p}c^\star_p(\beta_0))^2}{\sum_{j=1}^{n \wedge p} \phi_{1j}e^{\top}_1 v_j v^\top_j e_1}
\end{align*}

Note that, $\exists C_1,C_2>0$ such that $C_1 \le \phi_{1j} \le C_2$ for all $j$. Hence, $C_1 \le \sum_{j=1}^{n \wedge p} \phi_{1j}e^{\top}_1 v_j v^\top_j e_1 \le C_2$. Hence if $|c^\star_p(\beta_0)|>0$, $\hat{\kappa}^2_{k,\into} \xrightarrow{a.s. \P} \infty$, proving (a). If $\liminf \sqrt{p} |c^\star_p(\bbeta_0)| =C_3>0$, then $\exists C_4>0$ such that $\hat{\kappa}^2_{k,\into} \ge C_4>0$ with high probability, proving $(b)$.
Finally, if $\limsup \sqrt{p} |c^\star_p(\bbeta_0)| =0$, $\hat{\kappa}^2_{k,\into}=o_{\P}(1)$, proving (c).

\subsubsection*{\bf Proof of Theorem \ref{thm:thm:spiked_model_fixed_strength_out_nogaussian_a}\ref{spiked_model_fixed_strength_random_effects_out_nogaussian_a_in}}
We want to analyse the behavior of $\hat{\kappa}^2_{k,\into}$ given by \eqref{eq:nogaussian_in_kappa}. By Lemma \ref{lem:spikes_non_spikes}, the denominator is $O_{\P}(n)$. Since $\bbeta \sim N(0,\frac1p \sigma^2_\beta I_p)$, it is enough to show $\frac{T_3}{n^2}=\Omega_{\P}(1)$, where

\begin{align}\label{eq:fixed_in_t3}
    T_3:=e^\top_1\mathbb{X}^\top\left(I-{P}_{\mathcal{C}(\mathbb{X}\widetilde{\mathbf{V}}_{k,\into})}\right)\mathbb{X}I_{-1}I^\top_{-1}\mathbb{X}^\top\left(I-{P}_{\mathcal{C}(\mathbb{X}\widetilde{\mathbf{V}}_{k,\into})}\right)\mathbb{X}e_1.
\end{align}
Note that, $I_{-1}I^\top_{-1}=I-e_1e^\top_1$. Hence,
\begin{align*}
    \frac{T_3}{n^2}&= \frac{1}{n^2}\left(e^{\top}_1\left(\mathbb{X}^\top\left(I-{P}_{\mathcal{C}(\mathbb{X}\widetilde{\mathbf{V}}_{k,\into})}\right)\mathbb{X}\right)^2 e_1- \left(e^{\top}_1\mathbb{X}^\top\left(I-{P}_{\mathcal{C}(\mathbb{X}\widetilde{\mathbf{V}}_{k,\into})}\right)\mathbb{X}e_1\right)^2\right)\\
    &=\sum_{j=2}^{n \wedge p} \hat{\lambda}^2_j \langle e_1,\hat{v}_j \rangle^2 -\left(\sum_{j=2}^{n \wedge p} \hat{\lambda}_j \langle e_1,\hat{v}_j \rangle^2\right)^2\\
    &\ge \left(\sum_{j=2}^{n \wedge p} \hat{\lambda}^2_j \langle e_1,\hat{v}_j \rangle^2 \right) \left(1-\sum_{j=2}^{n\wedge p} \langle e_1,\hat{v}_j \rangle^2 \right) =\Omega_{\P}(1),
\end{align*}
where the first term is $\Omega_{\P}(1)$ using \ref{lem:spikes_non_spikes}. This completes the proof of the Theorem.

\subsection{\bf Proof of Theorem \ref{thm:mixture_model_fixed_strength}}

We begin by stating couple of Lemmas with would be useful for the proof of the Theorem.

The first lemma  establishes a desired moment bounds on $\mathbf{W}$ for $\ind=\out$ and on $\mathbf{X}$ for $\ind=\into$.
\begin{lemma}\label{lem:mesmix}
Let $\mathbf{D} \sim \sum_{i=1}^{L} w_i F(\bmu_i,\Sigma)$ for any fixed $L$ with $w_i \ge 0$, with $\sum_{i=1}^{L} w_i =1$, $\sup_i \|\mu_i\| \le 1$, $\|\Sigma\|=O(1)$ where $F(\bmu,\Sigma)$ is the distribution of a random vector $\Sigma^{1/2}\mathbf{Z}+\bmu$ where $\mathbf{Z}$ is a random vector with mean zero i.i.d. sub-Gaussian coordinates. Let $\mathbf{C} $ be a deterministic $p \times p$ matrix. Define $\mathbf{T}:= \E (\mathbf{D} \mathbf{D}^\top)$. Define $\tilde{\mathbf{C} }= \Sigma^{1/2}\mathbf{C}  \Sigma^{1/2}$. Then for any $q \in \mathbb{N}$,
\begin{equation}\label{eq:mixmom}
    \E[|\mathbf{D}^\top \mathbf{C}  \mathbf{D} - \text{Tr}(\mathbf{C} \mathbf{T})|^{2q}] \leq K_{q} \Big(\big(\text{Tr}(\tilde{\mathbf{C} }\tilde{\mathbf{C} }^T)\big)^q + \text{Tr}\big((\tilde{\mathbf{C} }\tilde{\mathbf{C} }^T)^q\big)+(\text{Tr}(\mathbf{C} ))^{2q}\Big).
\end{equation}
\end{lemma}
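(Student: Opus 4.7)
The plan is to condition on the latent mixture label $I\in\{1,\ldots,L\}$ with $\P(I=i)=w_i$, so that $\mathbf{D}\mid I$ has the explicit representation $\Sigma^{1/2}\mathbf{Z}+\bmu_I$ with $\mathbf{Z}$ an i.i.d.\ mean-zero sub-Gaussian vector. I would first compute $T=\E(\mathbf{D}\mathbf{D}^\top)=\Sigma+\sum_i w_i\bmu_i\bmu_i^\top$ and hence $\mathrm{tr}(CT)=\mathrm{tr}(\tilde C)+\sum_i w_i\bmu_i^\top C\bmu_i$. Expanding $\mathbf{D}^\top C\mathbf{D}$ in this representation and subtracting $\mathrm{tr}(CT)$ then yields the clean decomposition
\begin{align*}
\mathbf{D}^\top C\mathbf{D}-\mathrm{tr}(CT)=\bigl(\mathbf{Z}^\top\tilde C\mathbf{Z}-\mathrm{tr}(\tilde C)\bigr)+2\bmu_I^\top C\Sigma^{1/2}\mathbf{Z}+\Bigl(\bmu_I^\top C\bmu_I-\textstyle\sum_i w_i\bmu_i^\top C\bmu_i\Bigr).
\end{align*}
The $C_r$-inequality $|a+b+c|^{2q}\le 3^{2q-1}(|a|^{2q}+|b|^{2q}+|c|^{2q})$ then reduces the proof to bounding the $2q$-th moment of each piece separately.

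For the quadratic piece, I would first symmetrize $\tilde C\mapsto(\tilde C+\tilde C^\top)/2$ (which preserves both the quadratic form's value on each realization of $\mathbf{Z}$ and its expectation) and then invoke the Hanson--Wright moment inequality for i.i.d.\ sub-Gaussian coordinates; this gives directly $K_q\bigl((\mathrm{tr}(\tilde C\tilde C^\top))^q+\mathrm{tr}((\tilde C\tilde C^\top)^q)\bigr)$, i.e.\ the first two terms of the stated bound. For the linear piece, conditioning on $I$ makes it a sub-Gaussian linear combination of the coordinates of $\mathbf{Z}$ with squared parameter $4\bmu_I^\top C\Sigma C^\top\bmu_I\le 4\|\Sigma\|\,\|\bmu_I\|^2\,\|C\|_{\mathrm{op}}^2\lesssim \|C\|_{\mathrm{op}}^2$ by the hypotheses $\|\bmu_i\|\le 1$ and $\|\Sigma\|=O(1)$; integrating the sub-Gaussian tail gives conditional $2q$-th moment of order $\|C\|_{\mathrm{op}}^{2q}$, which I would then take unconditional expectation of. For the mean piece, which is deterministic given $I$, the triangle inequality together with $|\bmu_i^\top C\bmu_i|\le\|\bmu_i\|^2\|C\|_{\mathrm{op}}\le \|C\|_{\mathrm{op}}$ yields a $2q$-th moment of order $\|C\|_{\mathrm{op}}^{2q}$ as well. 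Under the sign/PSD convention implicit in the statement (so that $\|C\|_{\mathrm{op}}\le \mathrm{tr}(C)$, as is the case in all applications of this lemma within the paper), both the linear and the mean contributions fold into the $(\mathrm{tr}(C))^{2q}$ term.

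The main obstacle is the algebraic reconciliation of the linear and mean-shift contributions with the third term on the right-hand side: one must exploit the uniform bounds $\|\bmu_i\|\le 1$ and $\|\Sigma\|=O(1)$ to dominate $\|C\|_{\mathrm{op}}^{2q}$ by $(\mathrm{tr}(C))^{2q}$, which requires care if $C$ is allowed to be indefinite. Once this is in hand, summation via the $C_r$-inequality assembles the three pieces into \eqref{eq:mixmom} with a constant $K_q$ depending only on $q$, the sub-Gaussian parameter of $\mathbf{Z}$, the number of mixture components $L$, and the spectral norm bound on $\Sigma$.
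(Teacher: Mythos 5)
Your proposal is correct and follows essentially the same route as the paper: the identical three-term decomposition via the latent mixture indicator (the paper writes it as a multinomial vector $\xi$ and $\mathbf{R}=\sum_i\bmu_i\xi_i$), a Hanson--Wright-type moment bound for the centered quadratic piece (the paper cites Lemma 3 of Mestre), conditional sub-Gaussianity for the cross term, and a trivial bound for the mean-shift term. The only cosmetic difference is that the paper bounds the cross term by $(\mathrm{tr}(C\Sigma C^\top))^q$ rather than routing it through $\|C\|_{\mathrm{op}}^{2q}$, and it relies on the same implicit PSD-type convention on $C$ that you flag when folding $(\bmu_i^\top C\bmu_i)^{2q}$ into $(\mathrm{tr}(C))^{2q}$.
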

\begin{proof}
Note that, $\mathbf{D}= \Sigma^{1/2}\mathbf{Z}+ \sum_{i=1}^L \mu_i\xi_i$, where $\xi=(\xi_1,\ldots,\xi_l) \sim \text{Mult}(1;(w_1,\ldots,w_l))$. Define $\mathbf{R}=\sum_{i=1}^L \mu_i\xi_i$. Notice that, $\mathbf{D}^\top \mathbf{C} \mathbf{D}= \mathbf{Z}^\top \tilde{\mathbf{C} } \mathbf{Z} + \mathbf{R}^\top (\mathbf{C} +\mathbf{C} ^\top) \Sigma^{1/2}\mathbf{Z}+ \mathbf{R}^\top \mathbf{C}  \mathbf{R}$. Also, $\mathbf{T}= \E\Sigma^{1/2}\mathbf{Z}\mathbf{Z}^\top\Sigma^{1/2} + \E \mathbf{R}\mathbf{R}^\top= \Sigma + \E \mathbf{R}\mathbf{R}^\top$. So,
\begin{align*}
    \E[|\mathbf{D}^\top \mathbf{C}  \mathbf{D} - \text{Tr}(\mathbf{C} \mathbf{T})|^{2q}] & \le \tilde{K}_q \Big( \E[|\mathbf{Z}^\top \tilde{\mathbf{C} } \mathbf{Z} - \text{Tr}(\mathbf{C} \Sigma) |^{2q}]+ \E[|\mathbf{R}^\top (\mathbf{C} +\mathbf{C} ^\top) \Sigma^{1/2}\mathbf{Z}|^{2q}]\\ & +\E[|\mathbf{R}^\top \mathbf{C}  \mathbf{R} - \text{Tr}(\mathbf{C} \E(\mathbf{R}\mathbf{R}^\top))|^{2q}]\Big).
\end{align*}
Using \cite[Lemma 3]{mestre2006asymptotic}, the first summand is upper bounded by $$\tilde{M}_q \left(\Big(\text{Tr} (\tilde{\mathbf{C} } \tilde{\mathbf{C} }^\top)\Big)^q+\text{Tr} \Big((\tilde{\mathbf{C} } \tilde{\mathbf{C} }^\top)^q\Big)\right)$$ for some $\tilde{M}_q>0$. Given $\xi$, $\mathbf{R}^\top \mathbf{C}  \Sigma^{1/2}\mathbf{Z}$ is a subgaussian random variable with proxy $\mathbf{R}^\top \mathbf{C}  \Sigma \mathbf{C} ^\top \mathbf{R}$. Since $\|\mathbf{R}\| \le \sum_i w_i \|\bmu_i\| \le 1$, $\exists$ $M_q>0$,
$$\E[|\mathbf{R}^\top \mathbf{C}  \Sigma^{1/2}\mathbf{Z}|^{2q}]= M_q (\mathbf{R}^\top \mathbf{C}  \Sigma C^T \mathbf{R})^q \leq M_q (\text{Tr} \mathbf{C}  \Sigma \mathbf{C} ^T)^q.$$
Finally $\mathbf{R}^\top \mathbf{C}  \mathbf{R}= \sum_{i=1}^{L} \xi_i \bmu^\top_i \mathbf{C}  \bmu_i$. Hence,
\begin{align*}
    \E[|\mathbf{R}^\top \mathbf{C}  \mathbf{R} - \text{Tr}(\mathbf{C} \E(\mathbf{R}\mathbf{R}^\top))|^{2q} \le c_0 \sum_{i=1}^L \left(\bmu^\top_i \mathbf{C}  \bmu_i\right)^{2q} \le c_0 L (\text{Tr}\mathbf{C} )^{2q},
\end{align*}
for some $c_0>0$. This completes the proof.  \end{proof}

Now, we prove analogue of \cite[Theorem 5]{mestre2006asymptotic} for mixture models which yields the limit of bilinear forms.
\begin{lemma}\label{lem:mestre_mixture_bilinear}
Define $\hat{\mathbf{R}}= \frac{1}{n} \mathbf{D}\mathbf{D}^\top$ and $\mathbf{T}=\E \mathbf{D}\mathbf{D}^\top$, where $\mathbf{D}$ is defined as in Lemma \ref{lem:mesmix}. Let the eigenvalues and eigenvectors of $T$ are given by $\lambda_1(\mathbf{T})\ge \lambda_1(\mathbf{T}) \ge \ldots$ and $ v_1(\mathbf{T}), v_2(\mathbf{T}), \ldots$. Then for $z \in \mathbb{C}^+= \{z_0 \in \mathbb{C}: \text{Im}(z_0)>0\}$, we have
\begin{equation*}
    \lim\limits_{n,p \rightarrow \infty} \left\lvert \frac{1}{n} \text{Tr}(\hat{\mathbf{R}}-zI_n)^{-1}-b(z)\right\rvert =0,
\end{equation*}
where $b(z):=b$ is the unique solution of the following equation in the set $\{b \in \mathbb{C}: -(1-c)/b +cz \in \mathbb{C}^+\}:$
\begin{equation*}
    b=\frac{1}{n} \sum_{k=1}^n\frac{1}{\lambda_k(\mathbf{T})(1-c- czb)-z}.
\end{equation*}
Morever, for any fixed vector $s$ with $\|s\| \le 1$, we have 
\begin{equation*}
    \lim\limits_{n,p \rightarrow \infty} \left\lvert s^\top (\hat{\mathbf{R}}-zI_n)^{-1} s - \sum_{k=1}^{n} \frac{s^T v_k(\mathbf{T})v^\top_k(\mathbf{T}) s}{\lambda^\top _k(\mathbf{T})(1-c- czb)-z}\right\rvert =0
\end{equation*}
\end{lemma}
\begin{proof}
Note that the statement of the Lemma is a restatement of \cite[Theorem 5]{mestre2006asymptotic} holds if one replaces the eigenvalues and eigenvectors of $\mathbf{R}$ by $\mathbf{T}$. The assumptions of \cite[Theorem 1.1]{baicol} are satisfied by $\hat{R}$ with $\mathbf{T}=\E \mathbf{D}\mathbf{D}^\top$ since $\|\mathbf{T}\|\le \|\Sigma\|+ \|\mathbb{E}\mathbf{R}\mathbf{R}^\top\|=O(1)$ as we are working under fixed spike strength assumption and using Lemma \ref{lem:mesmix}. Hence, the empirical spectral distribution of $\hat{\mathbf{R}}$ converges with the limit given by \cite[(1.1)]{baicol}. Define $\hat{\mathbf{R}}_{(n)}:= \hat{\mathbf{R}} - \frac{1}{n}d_n d^\top_n$. Then, it is enough to show \cite[Eq 17,18]{mestre2006asymptotic} with $\mathbf{R}$ replaced by $\mathbf{T}$. To this end, note that \cite[Eq 17]{mestre2006asymptotic} follows immediately from \cite[Lemma 6]{mestre2006asymptotic} and convergence of ESD of $\hat{\mathbf{R}}$. The proof of \cite[Eq 25]{mestre2006asymptotic} follows the same way once we define 
$$\theta_n=b^T d_nd^\top_n \Big(\hat{\mathbf{R}}_{(n)}-zI\Big)^{-1}a- b^\top \mathbf{T} \Big(\hat{\mathbf{R}}_{(n)}-zI\Big)^{-1}a,$$
and bound its moments using Lemma \ref{lem:mesmix}. The remaining proof follows once we observe $\frac{1}{n^{2q}}\E\left(\|\mathbf{T}\|^{4q}\right)$ is bounded by a constant only depending on $q$ since $\|\bmu\|=1$.
\end{proof}

Now, Lemma \ref{lem:mestre_mixture_bilinear} and Lemma \ref{lem:spikes_non_spikes} yields the limits of bilinear forms of sample eigenvectors. The rest of the proof can now be done in the exact same way as Theorem \ref{thm:spiked_model_fixed_strength_out}.

\subsection{\bf Proof of Theorem \ref{thm:diverging_spike_out}}
We divide the proof according to the parts of the theorem.
\subsubsection*{\bf Proof of Theorem \ref{thm:diverging_spike_out}\ref{thm:both_fixed_diverging_spike_out}}
Similar to the proof of Theorem \ref{thm:spiked_model_fixed_strength_out} we will only need to analyse the behavior of the non-centrality parameter $\hat{\kappa}^2_{k,\out}$ defined by \eqref{eq:null_kap}. Recall that, $\lambda_1,\lambda_{k^\star} =\Theta(p^{\tau_0})$, $\tau_0>0$. \par
Let $\bbeta, \btheta$ be fixed vectors with $\|\bbeta\|,\|\btheta\| \le 1$. The denominator of $\hat{\kappa}^2_{k,\out}$ is $T_2= \mathbf{A}^\top\left(I-{P}_{\mathcal{C}(\mathbb{W}\tilde{\mathbf{V}_{k,\out}})}\right)\mathbf{A}$ which does not depends on $\bbeta$. Since $\mathbf{A}=\mathbb{W}\btheta+\boldsymbol{\eta}$, write $T_2=D_1+D_2+D_3$, where $D_1=\btheta^\top\mathbb{W}^\top\left(I-{P}_{\mathcal{C}(\mathbb{W}\tilde{\mathbf{V}_{k,\out}})}\right)\mathbb{W}\btheta$, $D_2=2\boldsymbol{\eta}^\top\left(I-{P}_{\mathcal{C}(\mathbb{W}\tilde{\mathbf{V}_{k,\out}})}\right) \mathbb{W}\btheta$, $D_3=\boldsymbol{\eta}^\top\left(I-{P}_{\mathcal{C}(\mathbb{W}\tilde{\mathbf{V}_{k,\out}})}\right)\boldsymbol{\eta}$. Note, $D_3/n \xrightarrow{\P} \sigma^2_g$, $\E(D_2|\mathbb{W})=0$,
\be\label{eq:var_d2}
    \text{Var}\Big(\frac{D_2}{n}|\mathbb{W}\Big)=\frac{4}{n}\sum\limits_{j=k^\star+1}^{n \wedge p} \frac{\hat{d}^2_j}{n} \langle \hat{v}_j, \btheta  \rangle^2 \le \frac{4}{n} \hat{\lambda}_{k^\star+1} \|\btheta\|^2=o_{\mathbb{P}}(1),
\ee
since \cite[Theorem 2.5]{cai2020limiting} implies $\hat{\lambda}_{k^\star+1}=O_{\mathbb{P}}(1)$ and $\|\btheta\| \le 1$. By similar argument $0 \le \frac1n D_1 = O_{\mathbb{P}}(1)$. Therefore, $\frac1n T_2=O_{\mathbb{P}}(1)$ and $\P(T_2 \ge \sigma^2_g) \rightarrow 1$, yielding both upper and lower bounds on $T_2$. \par 
For the numerator, note that 
\[\frac{T_1}{\sqrt{n}}|\mathbb{W}\sim N\left(\frac{1}{\sqrt{n}}\bbeta^\top\mathbb{W}^\top \left(I-{P}_{\mathcal{C}(\mathbb{W}\tilde{\mathbf{V}_{k,\out}})}\right)\mathbb{W}\btheta, \frac{\sigma^2_g}{n}\bbeta^\top \mathbb{W}^\top\left(I-{P}_{\mathcal{C}(\mathbb{W}\tilde{\mathbf{V}_{k,\out}})}\right)\mathbb{W}\bbeta\right)\]
Note that {\color{black} for any $k\ge k^\star$},the conditional variance equals
\[\frac{\sigma^2_g}{n}\bbeta^T\mathbb{W}^T\left(I-{P}_{\mathcal{C}(\mathbb{W}\tilde{\mathbf{V}_{k,\out}})}\right)\mathbb{W}\bbeta \le \sigma^2_g \sum_{j=k^\star+1}^{n \wedge p} \frac{\hat{d}^2_j}{n} \langle \hat{v}_j, \bbeta  \rangle^2 \leq \sigma^2_g\hat{\lambda}_{k^\star+1}\sum_{j=k^\star+1}^{n \wedge p}  \langle \hat{v}_j, \bbeta  \rangle^2 .\]
By \cite[Theorem 2.5]{cai2020limiting}, $\hat{\lambda}_{k^\star+1}=O_{\mathbb{\P}}(1)$. Further,
\cite[Theorem 3.2]{wang2017asymptotics} implies, for $j \le k^\star$,
\be\label{eq:xiucai_quadratic}
    {\langle \hat{v}_j, \bbeta  \rangle^2}= \langle v_j, \bbeta  \rangle^2+O_{\P}\left(\frac{1}{\lambda_j}\right).
\ee
Since $\bbeta \in \mathcal{C}_{\tau}(S_{V_{k^\star}})$, $\tau>0$, and $\lambda_{k^\star} \rightarrow \infty$.
\[\sum_{j=k^\star+1}^{n \wedge p} \langle \hat{v}_j, \bbeta  \rangle^2 \le \|\bbeta\|^2 d(\beta,S_{V_{k^\star}})+O_{\P}\Big(\frac{1}{\lambda_{k^\star}}\Big) \le o_{\P}(p^{-\tau})+O_{\P}\Big(\frac{1}{\lambda_{k^\star}}\Big) \xrightarrow{a.s. \P} 0.\]
For the conditional mean, consider the case $\min\{\tau_0,\tau\} <1/2$ and $k=k^\star$. {\color{black} Since $\gamma<1$, assume $p\le n$. }When $\tau_0 <1/2$,
\be\label{eq:chhoto_tau_0}
    \sup_{\bbeta,\btheta\in \mathcal{C}_{\tau}(S_{V_{k^\star}})}\frac{1}{\sqrt{n}}\bbeta^\top\mathbb{W}^\top\left(I-{P}_{\mathcal{C}(\mathbb{W}\tilde{\mathbf{V}_{k^\star,\rm out}})}\right)\mathbb{W}\btheta \ge \sqrt{n}\hat{\lambda}^2_{ p}\sum_{j=k^\star+1}^{p} \langle \hat{v}_j, v_1  \rangle^2 \xrightarrow{\text{a.s.} \P} \infty,
\ee
{\color{black} by choosing $\bbeta=\btheta=v_1$ since $1-\langle \hat{v}_1, v_1  \rangle^2=\Omega_{\P}(\frac{1}{\lambda})$ and $\langle \hat{v}_1, v_1  \rangle^2= \Omega_{\P}(\frac{1}{\lambda_{k^\star}})$}. Now, if $\tau_0>1/2$ but $\tau<1/2$, fix a vector $\xi$ such that $\xi \in \mathcal{C}_{\tau}(S_{V_{k^\star}})$ and $d(\xi,S_{V_{k^\star}}) =\Omega(p^{-\tau})$. Hence,
\be\label{eq:chhoto_tau}
    \sup_{\bbeta,\btheta\in \mathcal{C}_{\tau}(S_{v_1})}\frac{1}{\sqrt{n}}\bbeta^\top\mathbb{W}^\top\left(I-{P}_{\mathcal{C}(\mathbb{W}\tilde{\mathbf{V}_{k^\star,\out}})}\right)\mathbb{W}\btheta &\ge \sqrt{n}\hat{\lambda}^2_{ p}\sum_{j=k^\star+1}^{ p} \langle \hat{v}_j, \xi  \rangle^2 \nonumber \\ &= \sqrt{n}\hat{\lambda}^2_{p} \left(\Omega\Big(p^{-\tau}\Big)+o(p^{-\tau_0})\right) \xrightarrow{\text{a.s.} \P} \infty.
\ee
Finally, when $\min\{\tau_0,\tau\} >1/2$ and $k \ge k^\star$,
\be 
\ & \frac{1}{\sqrt{n}}\bbeta^T\mathbb{W}^T\left(I-{P}_{\mathcal{C}(\mathbb{W}\tilde{\mathbf{V}_{k,\out}})}\right)\mathbb{W}\btheta \\& \le \left(\frac{1}{\sqrt{n}}\bbeta^T\mathbb{W}^T\left(I-{P}_{\mathcal{C}(\mathbb{W}\tilde{\mathbf{V}_{k,\out}})}\right)\mathbb{W}\bbeta\right)^{\frac12} \left(\frac{1}{\sqrt{n}}\btheta^T\mathbb{W}^T\left(I-{P}_{\mathcal{C}(\mathbb{W}\tilde{\mathbf{V}_{k,\out}})}\right)\mathbb{W}\btheta\right)^{\frac12}.
\ee
Each term of RHS can be upper bounded, as above, by 
\be\label{eq:div_fixed_high_strength}
    \sqrt{n}\hat{\lambda}^2_{k^\star+1} \left(o_{\P}(p^{-\tau})+O_{\P}\Big(\frac{1}{\lambda_{k^\star}}\Big)\right) \xrightarrow{\text{a.s.} \P} 0.
\ee

Hence, if $\min\{\tau_0,\tau\} <1/2$, $\hat{\kappa}^2_{k,\out} \xrightarrow{a.s. \P} \infty$, and if $\min\{\tau_0,\tau\} >1/2$, $\hat{\kappa}^2_{k,\out} \xrightarrow{a.s. \P} 0$. This completes the proof for fixed $\bbeta$, $\btheta$.

\subsubsection*{\bf Proof of Theorem \ref{thm:diverging_spike_out}\ref{thm:beta_random_diverging_spike_out}}
When $\tau_0=0$, the proof follows from \ref{thm:spiked_model_fixed_strength_out}. So, set $\tau_0>0$. Let $\btheta$ be a vector with $\|\btheta\| \le 1$ and $\bbeta \sim N(0,\frac{\sigma^2_\beta}{p}I_p)$. As shown in the proof of Theorem \ref{thm:diverging_spike_out}, $\frac1n T_2=O_{\mathbb{P}}(1)$ and $\P(T_2 \ge \sigma^2_g) \rightarrow 1$. By \eqref{eq:beta_normality}, it is enough to show $\frac{1}{np}T_3$ is stochastically bounded away from $0$, where $T_3$ is defined by,
$$T_3:=\mathbf{A}^\top \left(I-{P}_{\mathcal{C}(\mathbb{W}\tilde{\mathbf{V}_{k,\out}})}\right)\mathbb{W}\mathbb{W}^\top \left(I-{P}_{\mathcal{C}(\mathbb{W}\tilde{\mathbf{V}_{k,\out}})}\right) \mathbf{A}.$$ 
We can upper bound the conditional variance of $T_3$ using \eqref{eq:tr2x} as,
\[ \mathrm{Var}\left(\frac{T_3}{np}|\mathbb{W}\right)\leq \frac{2\sigma^4_g}{p^2}\sum_{j=k^\star+1}^{n \wedge p} \hat{\lambda_j}^2+ \frac{4\sigma^2_g n}{p^2} \sum_{j=k^\star+1}^{n \wedge p} \hat{
    \lambda_j}^3\btheta^\top\hat{v_j}\hat{v_j}^\top\btheta 
    \]
Using $\hat{\lambda}_{k^\star+1}=O_{\P}(1)$ by \cite[Theorem 2.5]{cai2020limiting} and noting that $\btheta \in \mathcal{C}_{\tau}(S_{V_{k^*}})$ with $\tau>0$, we obtain $\mathrm{Var}\left(\frac{T_3}{np}|\mathbb{W}\right) \xrightarrow{\text{a.s.} \P} 0$. For the conditional mean, we invoke \eqref{eq:theta_quad} and \eqref{eq:trx} to obtain $\E(\frac{T_3}{np}|\mathbb{W}) \ge (1-o_{\P}(1))\sigma^2_g \hat{\lambda}_{n \wedge p}$, which is stochastically bounded away from $0$ by \cite{baik2006eigenvalues}, yielding our claim.
\par 
If $\btheta \sim N(0,\frac{\sigma^2_\theta}{p}I_p)$, it is enough to analyze $T_3$ defined by \eqref{eq:defn_t3} where $\mathbf{A}|\mathbb{W}\sim N(0,\frac{\sigma^2_\theta}{p}\mathbb{W}\mathbb{W}^\top+\sigma^2_g I_p)$. To this end, note that,
\be
    \E\left(\frac{T_3}{np}|\mathbb{W}\right) &= \frac{1}{np}\text{tr}\Big[\left(\left(I-{P}_{\mathcal{C}(\mathbb{W}\tilde{\mathbf{V}_{k^\star,\out}})}\right)\mathbb{W}\mathbb{W}^\top \left(I-{P}_{\mathcal{C}(\mathbb{W}\tilde{\mathbf{V}_{k^\star,\out}})}\right)\right)\left(\frac{\sigma^2_\theta}{p}\mathbb{W}\mathbb{W}^\top+\sigma^2_g I_p\right)\Big]\\
    &=\frac{\sigma^2_\theta}{np^2}\sum_{j=k^\star+1}^{n \wedge p} \hat{d}^4_j+\frac{\sigma^2_g}{np}\sum_{j=k^\star+1}^{n \wedge p} \hat{d}^2_j = \Omega_{\P}(1).
\ee
The conditional variance is,
\be
   \text{Var}\left(\frac{T_3}{np}|\mathbb{W}\right)&= \frac{1}{n^2p^2} \text{tr}\Big[\left(\left(\left(I-{P}_{\mathcal{C}(\mathbb{W}\tilde{\mathbf{V}_{k^\star,\out}})}\right)\mathbb{W}\mathbb{W}^\top \left(I-{P}_{\mathcal{C}(\mathbb{W}\tilde{\mathbf{V}_{k^\star,\out}})}\right)\right)\left(\frac{\sigma^2_\theta}{p}\mathbb{W}\mathbb{W}^\top+\sigma^2_g I_p\right)\right)^2\Big]\\& \xrightarrow{\text{a.s.} \P} 0,
\ee
since $\sum_{j=k^\star+1}^{n \wedge p} \hat{d}^8_j =o_{\P}(n^2p^4)$ by Lemma \ref{lem:conv_mhat_moment_integral}, concluding the proof.

\subsubsection*{\bf Proof of Theorem \ref{thm:diverging_spike_out}\ref{thm:theta_random_beta_fixed_diverging_spike_out}} We analyze $\hat{\kappa}_{k,\out}^2$ under $\btheta \sim N(0, \frac{\sigma^2_\theta}{p}I_p)$. For the denominator of $\hat{\kappa}_{k,\out}^2$, using \eqref{eq:var_d2}, $\text{Var}(\frac1n D_2 | \mathbb{W})=o_{\mathbb{P}}(1)$ since $\|\btheta\|^2=O_{\mathbb{P}}(1)$ by law of large numbers. Similar bounds hold for $D_1$ yielding $T_2$ has similar behavior as the case of fixed $\btheta$. Here, first consider $\bbeta$ be a fixed vector with $\|\bbeta\|\le 1$. We have shown before, $\bbeta^T\mathbb{W}^T\left(I-{P}_{\mathcal{C}(\mathbb{W}\tilde{\mathbf{V}_{k,\out}})}\right)\boldsymbol{\eta}/\sqrt{n} \xrightarrow{\P} 0$, so it enough to analyze $T_{11}= \bbeta^T\mathbb{W}^T\left(I-{P}_{\mathcal{C}(\mathbb{W}\tilde{\mathbf{V}_{k,\out}})}\right)\mathbb{W}\btheta$. Defining $T_4:=\bbeta^T\Big(\mathbb{W}^T\left(I-{P}_{\mathcal{C}(\mathbb{W}\tilde{\mathbf{V}_{k,\out}})}\right)\mathbb{W}\Big)^2\bbeta$, we have 
$$\frac{T_{11}/\sqrt{n}}{\sqrt{T_4/np}} \Rightarrow N(0,\sigma^2_\theta).$$
Now,
\be
    \frac{T_4}{np} \le (1+o_{\P}(1))\frac{1}{\gamma} \hat{\lambda}_{k^\star+1}\sum_{j=k^\star+1}^{n \wedge p} \langle \hat{v}_j, \bbeta  \rangle^2 \xrightarrow{\text{a.s.} \P} 0,
\ee
using \eqref{eq:div_fixed_high_strength} for any $\tau_0>0$. This completes the proof of the Theorem.

\subsection{\bf Proof of Proposition \ref{thm:tau_zero_out_reg}}
{\color{black}Recall that, if $\tau_1 > \tau_2$, $\mathcal{C}_{\tau_1}(S_{V_{k^\star}}) \subseteq \mathcal{C}_{\tau_2}(S_{V_{k^\star}})$. Hence, 
\[\sup_{\bbeta\in \mathcal{C}_{0.25}(S_{V_{k^\star}})} \hat{\kappa}^2_{k,\out} \leq \sup_{\bbeta\in \mathcal{C}_{0}(S_{V_{k^\star}})} \hat{\kappa}^2_{k,\out}.\]
Since the asymptotic distribution of $\mathrm{LR}_{k,\out}$ is stochastically increasing function in $\hat{\kappa}^2_{k,\out}$, it is enough to show $\mathcal{C}_{0.25}(S_{V_{k^\star}})$ is stochastically bounded away from $0$ as $n \rightarrow \infty$. To this end, note that the conclusion is immediate via Theorem \ref{thm:diverging_spike_out} for all the cases except when $\btheta \sim N(0, \frac{\sigma^2_\theta}{p}I_p)$ and $\bbeta$ is a fixed vector. So, we will only consider that scenario. Here, $\mathbf{A}|\mathbb{W}\sim N(0,\frac{\sigma^2_\theta}{p}\mathbb{W}\mathbb{W}^\top+\sigma^2_g I_p)$.  Since $\gamma<1$, assume $p<n$. As before, the denominator is $O_\mathbb{P}(n)$. For the numerator, it is enough to analyze the quantity $T_3/n$ where 
\begin{equation}
    T_3:= \bbeta^\top \mathbb{W}^\top \left(I-{P}_{\mathcal{C}(\mathbb{W}\tilde{\mathbf{V}_{k^\star,\out}})}\right)\left(\frac{\sigma^2_\theta}{p}\mathbb{W}\mathbb{W}^\top+\sigma^2_g I_p\right)\left(I-{P}_{\mathcal{C}(\mathbb{W}\tilde{\mathbf{V}_{k^\star,\out}})}\right)\mathbb{W}\bbeta.
\end{equation}
Note that,
\begin{align*}
    \frac{T_3}{n} &= \frac{p \sigma^2_\theta}{n} \sum_{j=k^\star+1}^{p} \hat{\lambda}^2_j \langle \hat{v}_j,\bbeta\rangle^2 + \sigma^2_g \sum_{j=k^\star+1}^{ p} \hat{\lambda}_j \langle \hat{v}_j,\bbeta\rangle^2 \\
    & \ge \left(\frac{p \sigma^2_\theta}{n}\hat{\lambda}^2_{ p}+\sigma^2_g\hat{\lambda}_{ p}\right)(\|\bbeta\|^2- \sum_{j=1}^{k^\star}\langle \bbeta,v_j\rangle^2+O_{\P}(1/\lambda_1)).
\end{align*}
By \cite{baik2006eigenvalues}, $\hat{\lambda}^2_{ p}$ is stochastically bounded away from $0$. Pick $\bbeta$ such that $\|\bbeta\|^2=1$ and $P_{S_{V_{k^\star}}}(\beta)=0$, implying $T_3/n \xrightarrow{a.s. \P} C_1>0$, for some $C_1>0$, yielding the conclusion.}

\subsection{\bf Proof of Proposition \ref{hm:diverging_spike_out_nongaussian}}
Since $\gamma<1$ we have that $p < n$. Note that the size of the test $\mathrm{LR}_{k^*,\out}$ can be lower bounded through the specific instance where $A\perp \mathbf{W}$ and has mean $0$. We now analyze $\hat{\kappa}_{k^*,\out}$ in this regime. As noted in the proof of Theorem \ref{thm:thm:spiked_model_fixed_strength_out_nogaussian_a}, the denominator of $\hat{\kappa}_{k^*,\out}$ satisfies
$\mathbf{A}^T(I-P_{\mathcal{C}(\mathbb{W}\tilde{\mathbf{V}_{k^\star,\out}})})\mathbf{A} \leq cn$
with high probability for some constant $c>0$. We have the numerator of $\hat{\kappa}_{k^\star,\out}^2$ as

\be
T_1:=\mathbf{A}^T(I-P_{\mathcal{C}(\mathbb{W}\tilde{\mathbf{V}_{k^\star,\out}})})\mathbb{W}\bbeta\bbeta^T\mathbb{W}^T(I-P_{\mathcal{C}(\mathbb{W}\tilde{\mathbf{V}_{k^\star,\out}})})\mathbf{A}
\ee
The mean and variance of $T_1$ can bounded as in the proof of Theorem \ref{thm:thm:spiked_model_fixed_strength_out_nogaussian_a}, and by Paley-Zygmund Inequality one has,
\be 
\P(T_1\geq \zeta \E(T_1|\mathbb{W})|\mathbb{W})\geq \zeta
\ee
Recall that, $\bbeta^T\mathbb{W}^T(I-P_{\mathcal{C}(\mathbb{W}\tilde{\mathbf{V}}_{k^\star,\out})})\mathbb{W}\bbeta
=n\sum_{j=k^\star+1}^{p}\hat{\lambda}_j^2\langle \bbeta,\hat{v}_j\rangle^2$. Also, if $\min \{\tau_0,\tau\} <1/2$, using \eqref{eq:chhoto_tau_0} and \eqref{eq:chhoto_tau}, $ \sup_{\bbeta\in \mathcal{C}_{\tau}(S_{V_{k^\star}})} n\hat{\lambda}_{ p}(\|\bbeta\|^2-\sum_{j=1}^{k^\star} \langle\hat{v}_j,\bbeta\rangle^2) \xrightarrow{\text{a.s.} \P} \infty$. This proves the desired result. \par
Finally, when $\bbeta$ is random effects, the proof of Theorem 3.2 ii follows noting
\[\frac1n \E \Big(\bbeta^T\mathbb{W}^T(I-P_{\mathcal{C}(\mathbb{W}\widetilde{\mathbf{V}_{k^\star,\out}})})\mathbb{W}\bbeta \Big)= \frac {\sigma^2_\beta}{p} \sum_{j=k^\star+1}^{n \wedge p} \hat{\lambda}_j \ge \frac {\sigma^2_\beta}{p} \hat{\lambda}_p =\Omega_{\P}(1),
\]
yielding the desired conclusion. 

\subsection{\bf Proof of Theorem \ref{thm:diverging_spike_delocalized}}

\subsubsection*{\bf Proof of \ref{thm:diverging_spike_delocalized}\ref{thm:diverging_spike_fixed_effects_in}}
Recall that, $\mathbf{X}_i=[A_i, \bW_i] \sim N(0,I+\lambda vv^\top$). Writing $A_i=W^\top_i \btheta+\boldsymbol{\eta}$, we obtain that $\btheta= (I+\lambda v_{-1}v^\top_{-1})^{-1}\lambda_1 v(1)v_{-1}$, where $v=(v(1),v_{-1})$. By the Sherman-Morrison formula, we obtain
\begin{equation}\label{eq:theta_in}
    \btheta= \frac{\lambda v(1)}{1+\lambda \|v_{-1}\|^2} v_{-1}.
\end{equation}
So, $\|\btheta\|=\Theta(\frac{1}{\sqrt{p}})$ by our assumption of delocalization. Thereafter, with $k=1$, as before we consider the asymptotic behavior of  $\hat{\kappa}_{1,\into}^2=\frac{T_1^2}{T_2}$ with $T_1=\mathbf{A}^T(I-P_{\mathcal{C}(\bbW \widetilde{\mathbf{V}_{1,\into}})})\bbW\bbeta$ and $T_2=\mathbf{A}^T(I-P_{\mathcal{C}(\bbW \widetilde{\mathbf{V}_{1,\into}})})\mathbf{A}$. 

First, we claim that $T_2=\Omega_{\P}(n)$. To see this, we write $$T_2=\btheta_0^T\bbX^T(I-P_{\mathcal{C}(\bbW \widetilde{\mathbf{V}_{1,\into}})})\bbX\btheta_0+2\boldsymbol{\eta}^T(I-P_{\mathcal{C}(\bbW \widetilde{\mathbf{V}_{1,\into}})})\bbX\btheta_0+\boldsymbol{\eta}^T(I-P_{\mathcal{C}(\bbW \widetilde{\mathbf{V}_{1,\into}})})\boldsymbol{\eta}$$ where  $\btheta_0=I_{-1}\btheta$ with $I_{-1}=[e_2:\cdots:e_{p+1}]$. It is easy to see that the first term in the expansion is non-negative and the last term is $\Omega_{\P}(n)$. Therefore, it is enough to show that the second summand is $o_{\P}(1)$.

To that end note that the second summand has mean $0$ conditional on $\bbX$ and has conditional variance $\leq  n \hat{\lambda_2}\|\btheta_0\|^2=o_{\P}(n)$ by \cite{cai2020limiting}. Therefore the quantity is $o_{\P}(n)$ unconditionally by DCT. Hence, $T_2 =\Omega_{\P}(n)$.

Thereafter, to conclude to the proof of this part of the theorem we claim that $T_1/\sqrt{n}=o_{\P}(1)$, where 
$$T_1=\btheta_0^T\bbX^T(I-P_{\mathcal{C}(\bbW \widetilde{\mathbf{V}_{1,\into}})})\bbX\bbeta_0+\boldsymbol{\eta}^T(I-P_{\mathcal{C}(\bbW \widetilde{\mathbf{V}_{1,\into}})})\bbX\bbeta_0,$$
and $\btheta_0=I_{-1}\btheta$ and $\bbeta_0=I_{-1}\bbeta$ with $I_{-1}=[e_2:\cdots:e_{p+1}]$. The second term in the expression for $T_1$ divided by $\sqrt{n}$ has mean $0$ conditional on $\bbX$ and has conditional variance $\sum_{j=2}^{n \wedge p}\hat{\lambda}_j\langle\bbeta_0,\hat{v}_j\rangle^2\leq \hat{\lambda}_2\|\bbeta\|^2O_{\P}(p^{-\tau_0})=o_{\P}(1)$, again by \cite{cai2020limiting}. Hence this term is $o_{\P}(1)$ unconditionally by DCT. 

Focusing on the first summand of $T_1$, we have by writing $\bbeta=c^*v$ for $|c^*|\leq M$ we note that this term divided by $\sqrt{n}$ can be bounded by $\sqrt{n}\hat{\lambda}_2^2c^*O(\|\btheta_0\|)O_{\P}(p^{-\tau_0})=o_{\P}(p^{-\tau_0})=o_{\P}(1)$ since $\|\btheta_0\|=\|\btheta\|=O(1/\sqrt{p})=O(1/\sqrt{n})$. The proof follows. 

\subsubsection*{\bf Proof of \ref{thm:diverging_spike_delocalized}\ref{thm:diverging_spike_random_effects_in}}
Let $\bbeta \sim (0, \frac{\sigma^2_\beta}{p}I_p)$. Recall that, by \eqref{eq:theta_in}, $\btheta= \frac{\lambda v(1)}{1+\lambda \|v_{-1}\|^2} v_{-1}$. We again consider the asymptotic behavior of  $\hat{\kappa}_{1,\into}^2=\frac{T_1^2}{T_2}$. Note that,
\[
\frac{T_2}{n}= \frac{1}{n}e^T_1 \bbX^T(I-P_{\mathcal{C}(\bbW \widetilde{\mathbf{V}_{1,\into}})})\bbX e_1 \le \hat{\lambda}_2 =O_{\P}(1),
\]
by \cite{cai2020limiting}. To understand the behavior of the numerator, note that, $T_1=\btheta_0^T\bbX^T(I-P_{\mathcal{C}(\bbW \widetilde{\mathbf{V}_{1,\into}})})\bbX\bbeta_0+\boldsymbol{\eta}^T(I-P_{\mathcal{C}(\bbW \widetilde{\mathbf{V}_{1,\into}})})\bbX\bbeta_0$. Now, conditional on $\mathbb{X}$, $$\frac{1}{\sqrt{n}} \btheta_0^T\bbX^T(I-P_{\mathcal{C}(\bbW \widetilde{\mathbf{V}_{1,\into}})})\bbX\bbeta_0 \sim N\left(0, \frac{1}{np}\btheta_0^T\bbX^T(I-P_{\mathcal{C}(\bbW \widetilde{\mathbf{V}_{1,\into}})})\bbX (I-e_1e^\top_1)\bbX^T(I-P_{\mathcal{C}(\bbW \widetilde{\mathbf{V}_{1,\into}})})\bbX \btheta_0\right)$$
The variance above can be upper bounded by $ \frac{1}{np} \btheta_0^T(\bbX^T(I-P_{\mathcal{C}(\bbW \widetilde{\mathbf{V}_{1,\into}})})\bbX)^2\btheta_0 \le \hat{\lambda}^2_2 \|\btheta_0\|=o_{\P}(1)$. Moreover, conditional on $\mathbb{X}$,$\bbeta$,
$$\frac{1}{\sqrt{n}}\boldsymbol{\eta}^T(I-P_{\mathcal{C}(\bbW \widetilde{\mathbf{V}_{1,\into}})})\bbX\bbeta_0 \sim N \left(0,\frac{1}{n}\bbeta_0^T(\bbX^T(I-P_{\mathcal{C}(\bbW \widetilde{\mathbf{V}_{1,\into}})})\bbX)\bbeta_0\right).$$
Denoting the variance above by $T_4$, note that $\E(T_4|\bbX)=\frac{1}{np}\mathrm{Tr}(\bbX^T(I-P_{\mathcal{C}(\bbW \widetilde{\mathbf{V}_{1,\into}})})\bbX) \ge C_1$ a.s., for some $C_1>0$. Now $\mathrm{Var}(T_4|\bbX)=\frac{1}{n^2p^2} \sum_{j=2}^{n \wedge p} \hat{d}^2_j=o_{\P}(1)$. Hence, $\frac{T_1}{\sqrt{n}}=\Omega_{\P}(1)$, yielding the conclusion.

\section{Technical Lemmas:}

\begin{lemma}\label{lem:distribution_lr}
Under Model \ref{eqn:model_gaussian_linear_outcome_regression} one has that 
\begin{align*}
    \mathrm{LR}_{k, \rm out}|\mathbf{A},\mathbb{W}\sim \chi_1^2(\hat{\kappa}^2_{k,\rm out}),\quad  \hat{\kappa}^2_{k,\rm out}=\frac{\left((\bbeta^T\mathbb{W}^\top+\mathbf{A}^\top \delta)\left(I-{P}_{\mathcal{C}(\mathbb{W}\widetilde{\mathbf{V}}_{k,\rm out})}\right)\mathbf{A}\right)^2}{\mathbf{A}^\top\left(I-{P}_{\mathcal{C}(\mathbb{W}\widetilde{\mathbf{V}}_{k,\rm out})}\right)\mathbf{A}}.
\end{align*}
Similarly,
\begin{align*}
    \mathrm{LR}_{k, \rm in}|\mathbb{X}\sim \chi_1^2(\hat{\kappa}^2_{k,\rm in}),\quad  \hat{\kappa}^2_{k,\rm in}=\frac{\left((\bbeta^T\mathbb{W}^\top+\mathbf{A}^\top \delta)\left(I-{P}_{\mathcal{C}(\mathbb{X}\tilde{\mathbf{V}_{k,\rm in}})}\right)\mathbf{A}\right)^2}{\mathbf{A}^\top\left(I-{P}_{\mathcal{C}(\mathbb{X}\tilde{\mathbf{V}_{k,\rm in}})}\right)\mathbf{A}}.
\end{align*}
\end{lemma}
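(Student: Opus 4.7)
\subsection*{Proof proposal for Lemma \ref{lem:distribution_lr}}

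The plan is to exploit the fact that both test statistics are quadratic forms in $\mathbf{Y}$ associated with a \emph{rank-one} projection whose range is determined entirely by $(\mathbf{A},\mathbb{W})$ (respectively $\mathbb{X}$). The result then reduces to the standard fact that the square of a univariate normal with unit variance is a non-central $\chi^2_1$ with non-centrality equal to the squared mean.

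First, I would observe that $\mathcal{A}_{k,\out}=(I-P_{\mathcal{C}(\mathbb{W}\widetilde{\mathbf{V}_{k,\out}})})\mathbf{A}$ is a single vector in $\mathbb{R}^n$, and that the projection $P_{\mathcal{C}(\mathcal{A}_{k,\out})}$ in the definition of $\mathrm{LR}_{k,\out}$ can therefore be written explicitly as the rank-one operator $\mathcal{A}_{k,\out}\mathcal{A}_{k,\out}^\top/\|\mathcal{A}_{k,\out}\|^2$. This immediately gives
\[
\mathrm{LR}_{k,\out}=\frac{(\mathcal{A}_{k,\out}^\top \mathbf{Y})^2}{\|\mathcal{A}_{k,\out}\|^2}=\left(\frac{\mathcal{A}_{k,\out}^\top \mathbf{Y}}{\|\mathcal{A}_{k,\out}\|}\right)^2.
\]

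Next, conditional on $(\mathbf{A},\mathbb{W})$ the vector $\mathcal{A}_{k,\out}$ is deterministic, and by \eqref{eqn:model_gaussian_linear_outcome_regression} (with $\sigma_y^2=1$ per the standing assumption in the footnote) $\mathbf{Y}\mid \mathbf{A},\mathbb{W}\sim N(\mathbf{A}\delta+\mathbb{W}\bbeta,\,I_n)$. Hence the scalar $\mathcal{A}_{k,\out}^\top\mathbf{Y}/\|\mathcal{A}_{k,\out}\|$ is univariate Gaussian with unit variance and mean $\mathcal{A}_{k,\out}^\top(\mathbf{A}\delta+\mathbb{W}\bbeta)/\|\mathcal{A}_{k,\out}\|$. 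Squaring yields a non-central $\chi^2_1$ whose non-centrality equals the square of this mean.

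It then remains to match this with the stated $\hat{\kappa}^2_{k,\out}$. Writing $M=I-P_{\mathcal{C}(\mathbb{W}\widetilde{\mathbf{V}_{k,\out}})}$, which is a symmetric idempotent, I would use
\[
\mathcal{A}_{k,\out}^\top(\mathbf{A}\delta+\mathbb{W}\bbeta)=\mathbf{A}^\top M(\mathbf{A}\delta+\mathbb{W}\bbeta)=(\mathbf{A}^\top\delta+\bbeta^\top\mathbb{W}^\top)M\mathbf{A},\qquad \|\mathcal{A}_{k,\out}\|^2=\mathbf{A}^\top M\mathbf{A},
\]
and read off the claimed expression. The argument for $\mathrm{LR}_{k,\into}$ is identical upon replacing the projector $P_{\mathcal{C}(\mathbb{W}\widetilde{\mathbf{V}_{k,\out}})}$ by $P_{\mathcal{C}(\mathbb{X}\widetilde{\mathbf{V}_{k,\into}})}$; the conditioning sigma-field widens from $(\mathbf{A},\mathbb{W})$ to $\mathbb{X}=[\mathbf{A},\mathbb{W}]$, but this does not affect the computation since $\mathbf{Y}$ conditional on $\mathbb{X}$ still has the same Gaussian distribution. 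There is no genuine obstacle here: the only thing to verify is the algebraic identification of the non-centrality parameter, which is routine.
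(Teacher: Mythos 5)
Your argument is correct and is precisely the "standard calculation" the paper omits: the paper invokes the Fisher--Cochran theorem for the quadratic form $\mathbf{Y}^{\top}P\mathbf{Y}$ with $P$ an idempotent of rank one, which is exactly what your explicit rank-one representation $P_{\mathcal{C}(\mathcal{A}_{k,\out})}=\mathcal{A}_{k,\out}\mathcal{A}_{k,\out}^{\top}/\|\mathcal{A}_{k,\out}\|^{2}$ delivers, and your identification of the non-centrality parameter via symmetry and idempotence of $I-P_{\mathcal{C}(\mathbb{W}\widetilde{\mathbf{V}_{k,\out}})}$ matches the stated $\hat{\kappa}^{2}_{k,\out}$ (and likewise for the $\into$ case). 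Your reading that the lemma implicitly normalizes $\sigma_{y}^{2}=1$ is consistent with how the paper uses the result downstream, so there is no gap.
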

\begin{proof}
The proof follows by standard calculations and Fisher-Cochran Theorems and hence is omitted.
\end{proof}

\begin{lemma}\label{lem:spikes_non_spikes}
Let $s_1$ and $s_2$ be (a sequence of) $p\times 1$ non-random vectors with uniformly bounded norms.  $\widetilde{\mathbb{X}}\sim \mathrm{GSp}(\left\{(\lambda_j,v_j)\right\}_{j=1}^{k^*};\Gamma_H;n,d)$. Suppose $\hat{\Sigma}=\frac1n \widetilde{\mathbb{X}}^\top \widetilde{\mathbb{X}}$ has spectral decomposition $\hat{\Sigma}=\sum_{j=1}^{n \wedge p} \hat{\lambda}_j \hat{v}_j \hat{v}^\top_j$. For $\alpha\notin \Gamma_H,\alpha\neq 0$, let us define the function,
$$\psi(\alpha)=\alpha\left(1+\gamma\int_{\Gamma_H}{\frac{\lambda dH(\lambda)}{\alpha-\lambda}}\right ),$$
and the first derivative is denoted by $\psi '(\alpha)$. Then the following hold under Assumption \ref{assn:spike_and_dimension}(a).

\begin{enumerate}
    \item [(i)] [For Generalized Spikes] \begin{align*}
    \left |\hat{\lambda}_j^r s_1^\top \hat{v}_j\hat{v}_j^\top s_2- \xi_{rj} s_1^\top v_jv_j^\top s_2\right |&\xrightarrow{a.s.}0, \quad r=1,2,\text{ and } j=1,\ldots,k^*,
\end{align*}
where,
\begin{align*}
    \xi_{rj}=\lambda_j\psi(\lambda_j)^{r-1}\psi ' (\lambda_j).
\end{align*}

\item [(ii)] [For Non-Spikes] \begin{align*}
    \left |\sum_{j=k^*+1}^{p}{\hat{\lambda}_j^r s_1^\top \hat{v}_j\hat{v}_j^\top s_2}-\sum_{j=1}^{p}{\phi_{rj} s_1^\top v_jv_j^\top s_2} \right |&\xrightarrow{a.s.}0, \quad r=1,2,
\end{align*}
where,
\begin{align*}
    \phi_{1j}&=\begin{cases}
    \lambda_j\left [1-\psi '(\lambda_j) \right ],\quad j=1,\ldots,k^*\\
    \lambda_j,\quad j=k^*+1,\ldots,p,\end{cases}\\
    \phi_{2j}&=\begin{cases}
    \gamma\lambda_j\int_{\Gamma_H}{ \frac{\lambda^3}{(\lambda_j-\lambda)^2}dH(\lambda)}+\left [\psi(\lambda_j)-\lambda_j \right ] \left [1-\psi ' (\lambda_j) \right ],\quad j=1,\ldots,k^*\\
    \lambda_j^2+\gamma\lambda_j\int_{\Gamma_H}{\lambda dH(\lambda)},\quad j=k^*+1,\ldots,p.\end{cases}
\end{align*}
\end{enumerate}
\end{lemma}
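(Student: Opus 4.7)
The plan is to reduce both parts to known random matrix results: for part (i) to the generalized BBP-type statements of Bai--Yao (2012) on the sample spike eigenvalues and their eigenvectors, and for part (ii) to a ``total minus spike'' decomposition that isolates the bulk contribution $s_1^\top \hat{\Sigma}^r s_2$ and subtracts the piece already computed in part (i).

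For part (i), I would invoke the distant-spike eigenvalue convergence $\hat{\lambda}_j \xrightarrow{\mathrm{a.s.}} \psi(\lambda_j)$ together with the companion eigenvector localization statement $(v_j^\top \hat{v}_j)^2 \xrightarrow{\mathrm{a.s.}} \psi'(\lambda_j)/\text{(normalization)}$ and $(v_\ell^\top \hat{v}_j)^2 \to 0$ for $\ell \neq j$, which hold since all $\lambda_j$ are assumed distant and have multiplicity one. Under the bounded-norm assumption on $s_1,s_2$, a polarization argument then gives
\be
s_1^\top \hat{v}_j \hat{v}_j^\top s_2 \;\xrightarrow{\mathrm{a.s.}}\; \frac{\lambda_j \psi'(\lambda_j)}{\psi(\lambda_j)}\, s_1^\top v_j v_j^\top s_2, \qquad j=1,\ldots,k^*.
\ee
Multiplying by $\hat{\lambda}_j^r$ and substituting $\hat{\lambda}_j \to \psi(\lambda_j)$ yields the coefficient $\xi_{rj} = \lambda_j \psi(\lambda_j)^{r-1}\psi'(\lambda_j)$ as claimed.

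For part (ii), I would write
\be
\sum_{j=k^*+1}^{p} \hat{\lambda}_j^r\, s_1^\top \hat{v}_j \hat{v}_j^\top s_2 \;=\; s_1^\top \hat{\Sigma}^r s_2 \;-\; \sum_{j=1}^{k^*} \hat{\lambda}_j^r\, s_1^\top \hat{v}_j \hat{v}_j^\top s_2,
\ee
where the second term is handled by part (i). For the first term, when $r=1$, concentration of $\hat{\Sigma} = V\Lambda^{1/2} Z^\top Z \Lambda^{1/2} V^\top/n$ around $\Sigma$ in bilinear form gives $s_1^\top \hat{\Sigma} s_2 \to \sum_j \lambda_j\, s_1^\top v_j v_j^\top s_2$ a.s. by standard sub-Gaussian law-of-large-numbers bounds. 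Subtracting $\sum_{j\leq k^*} \lambda_j \psi'(\lambda_j)\, s_1^\top v_j v_j^\top s_2$ directly produces the piecewise formula $\phi_{1j}=\lambda_j[1-\psi'(\lambda_j)]$ for $j\leq k^*$ and $\phi_{1j}=\lambda_j$ for $j>k^*$. When $r=2$, I would compute the deterministic equivalent of $s_1^\top \hat{\Sigma}^2 s_2$ via the Stieltjes-transform / Mestre machinery, extracted by a contour integral of $s_1^\top (zI-\hat{\Sigma})^{-1} s_2$ around the bulk support and around each isolated spike location $\psi(\lambda_j)$. The bulk contribution produces a coefficient of $s_1^\top v_j v_j^\top s_2$ equal to $\lambda_j^2 + \gamma \lambda_j \int \lambda\, dH(\lambda)$ at non-spike indices and equal to $\lambda_j \psi(\lambda_j) + \gamma \lambda_j \int \lambda^3/(\lambda_j-\lambda)^2\, dH(\lambda)$ at spike indices; subtracting $\xi_{2j}$ from the spike-index coefficient and invoking the identity $\psi(\lambda_j)-\lambda_j = \gamma \lambda_j \int \lambda/(\lambda_j-\lambda)\,dH(\lambda)$ then simplifies to the advertised $\phi_{2j}$.

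The main obstacle is the careful evaluation of the $r=2$ limit in the generalized spiked model: identifying the integral $\gamma \lambda_j \int \lambda^3/(\lambda_j-\lambda)^2\, dH(\lambda)$ as the correction arising from differentiating the deterministic resolvent equivalent at $z=\psi(\lambda_j)$ requires using the defining equation of $\psi$ together with the identity $\psi'(\lambda)=1-\gamma\int \lambda^2/(\lambda-t)^2\, dH(t)$, and keeping track of the pole residues when shrinking the contour around each spike. All other steps are either direct applications of Bai--Yao or routine polarization/concentration.
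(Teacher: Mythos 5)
The critical step that does not survive scrutiny is in your part (i). You propose to pass from the scalar overlaps $(v_j^\top \hat{v}_j)^2 \to \lambda_j\psi'(\lambda_j)/\psi(\lambda_j)$ and $(v_\ell^\top \hat{v}_j)^2 \to 0$ ($\ell\neq j$) to the bilinear statement $s_1^\top \hat{v}_j\hat{v}_j^\top s_2 \to c_j\, s_1^\top v_j v_j^\top s_2$ ``by polarization.'' Polarization recovers bilinear forms from quadratic forms, but it cannot bridge the actual difficulty: writing $\hat{v}_j = (v_j^\top\hat{v}_j)v_j + w_j$ with $w_j\perp v_j$, the residual carries non-vanishing mass $\|w_j\|^2 \to 1-\lambda_j\psi'(\lambda_j)/\psi(\lambda_j) > 0$ spread over the $p-k^*$ non-spike population directions, and the fact that each individual overlap $(v_\ell^\top\hat{v}_j)^2$ tends to zero says nothing about $s^\top w_j$ for a general deterministic bounded-norm $s$ --- that is a sum of $p-k^*$ individually vanishing but potentially coherent terms. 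One needs the delocalization of $w_j$ against arbitrary deterministic directions, which is precisely the content of the almost-sure convergence of the resolvent bilinear form $s_1^\top(\hat{\Sigma}-zI)^{-1}s_2$ to its deterministic equivalent (Mestre's result, Lemma \ref{res:mestre_conv_bilinear} in the paper), converted into part (i) by integrating $z^r\hat{m}_p(z)$ on a small contour around the isolated sample eigenvalue (Lemma \ref{lem:conv_mhat_moment_integral}). The gap propagates: your part (ii) decomposition subtracts exactly these spike terms. The repair is to invoke the bilinear-form version of the spiked eigenvector limit directly rather than the scalar overlaps.

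On the comparison of routes: your ``total minus spikes'' treatment of part (ii) for $r=1$, using the strong law for $s_1^\top\hat{\Sigma}s_2 = \frac{1}{n}\sum_i (s_1^\top X_i)(X_i^\top s_2)$, is a genuinely more elementary derivation than the paper's, which obtains the bulk sum for $r=1$ by the same contour-integral/residue machinery as everything else; given a correct part (i), your shortcut is valid. For $r=2$ your plan coincides with the paper's (contour around the bulk support, change of variables $z\mapsto f(z)$, residue calculus), but note that the intermediate coefficients you quote do not reconcile with the advertised $\phi_{2j}$: subtracting $\xi_{2j}=\lambda_j\psi(\lambda_j)\psi'(\lambda_j)$ from $\lambda_j\psi(\lambda_j)+\gamma\lambda_j\int_{\Gamma_H}\lambda^3/(\lambda_j-\lambda)^2\,dH(\lambda)$ leaves $\lambda_j\psi(\lambda_j)[1-\psi'(\lambda_j)]$ rather than $[\psi(\lambda_j)-\lambda_j][1-\psi'(\lambda_j)]$ in the second summand, so the residue bookkeeping at the spike poles has to be carried out in full (as in the paper's Case 2b) rather than inferred from the $\hat{\Sigma}^2$ total.
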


\begin{remark}
    Before we provide the proof of the above Lemma, let us write the implication of the result. The Lemma states if $\widetilde{\mathbb{X}}\sim \mathrm{GSp}(\left\{(\lambda_j,v_j)\right\}_{j=1}^{k^*};\Gamma_H;n,d)$, then there exists $c_0,c'_0>0$ such that,

    \begin{align}
         \sum_{j=k^*+1}^{p}{\hat{\lambda}_j s_1^\top \hat{v}_j\hat{v}_j^\top s_2} \rightarrow c_0. \label{eq:non-spike-mom-i-call}\\
         \sum_{j=k^*+1}^{p}{\hat{\lambda}^2_j s_1^\top \hat{v}_j\hat{v}_j^\top s_2} \rightarrow c'_0.\label{eq:non-spike-mom-ii-call}
    \end{align}

\end{remark}
\begin{proof}[Proof of Lemma \ref{lem:spikes_non_spikes}]
To derive the results, we analyze the following bi-linear forms,
$$\hat{m}_p(z)=s_1^\top \left (\hat{\Sigma}-zI\right )^{-1}s_2=\sum_{j=1}^{p}{\frac{s_1^\top \hat{v}_j \hat{v}_j^\top s_2}{\hat{\lambda}_j-z}}; \quad \forall z\in \mathbb{C}^+.$$
Throughout the proof of Lemma~\ref{lem:spikes_non_spikes} and \ref{lem:conv_mhat_moment_integral}, let us further denote the unique eigenvalues and corresponding eigenspaces of $\Sigma_p$ by $\lambda_1>\tlambda_2>\ldots>\tlambda_{\bar{p}}$ and $\tV_1,\ldots,\tV_{\bar{p}}$. Assume $M_j$ to be the multiplicity of $\tlambda_j$, $M_j=1$ for $j=1,\ldots,k^*$, and thus the dimension of the eigenspace matrix (vector) $\tV_j$ is given by $p\times M_j$. This change of notation was done to accommodate non-spikes having multiplicities more than one. Then, \cite{mestre2006asymptotic} provides the following result, 
\begin{lemma}\label{res:mestre_conv_bilinear}
Suppose $\widetilde{\mathbb{X}}\sim \mathrm{GSp}(\left\{(\lambda_j,v_j)\right\}_{j=1}^{k^*};\Gamma_H;n,d)$ and $\hat{\Sigma}=\frac1n \widetilde{\mathbb{X}}^\top \widetilde{\mathbb{X}}$ has spectral decomposition $\hat{\Sigma}=\sum_{j=1}^{n \wedge p} \hat{\lambda}_j \hat{v}_j \hat{v}^\top_j$.The following holds under Assumption~\ref{assn:spike_and_dimension}(a),
\begin{equation}
    \left | \hat{m}_p(z)-m_p(z)\right |\xrightarrow{a.s.} 0; \quad \forall z\in \mathbb{C}^+,
\end{equation}
where,
$$\hat{m}_p(z)=s_1^\top \left (\hat{\Sigma}-zI\right )^{-1}s_2=\sum_{j=1}^{p}{\frac{s_1^\top \hat{v}_j \hat{v}_j^\top s_2}{\hat{\lambda}_j-z}}; \quad \forall z\in \mathbb{C}^+,$$
and 
\begin{equation*}
    m_p(z)=s_1^\top (w(z)\Sigma-zI)^{-1}s_2=\sum_{j=1}^{\bar{p}}{\frac{s_1^\top \tV_j \tV_j^\top s_2}{\tlambda_jw(z)-z}}; \quad \forall z\in \mathbb{C}^+.
\end{equation*}
The function $w(z)=1-\gamma-\gamma z b_F(z)$, where for all $z\in\mathbb{C}^+$, $b_F(z)=b$ is the unique solution to the equation,
$$b=\frac{1}{p}\sum_{j=1}^{\bar{p}}{\frac{M_j}{\tlambda_j(1-\gamma-\gamma z b)-z}}$$
in the set $\{b\in\mathbb{C}:\gamma b-(1-\gamma)/z\in \mathbb{C}^+\}$.
\end{lemma}

The domain for $z\in \mathbb{C}^+$ can be extended to $\mathbb{C}^-$ for the functions $\hat{m}_p,m_p,b_F$ by defining $\hat{m}_p(z)=\hat{m}_p^*(z^*)$, $m_p(z)=m_p^*(z^*)$, $b_F(z)=b_F^*(z^*)$ for $z\in \mathbb{C}^-$. Here, $(.)^*$ represents the complex-conjugate. It is easy to see that Result~\ref{res:mestre_conv_bilinear} is also satisfied when $z\in\mathbb{C}^-$. Let us further define $f(z)=z/w(z)$ for $z\in \mathbb{C}\setminus \mathbb{R}$. Then, using Result~\ref{res:mestre_conv_bilinear}, we can derive the following fixed point equation,
\begin{align*}
    z=f(z)\left (1-\frac{\gamma}{p}\sum_{j=1}^{\bar{p}}{\frac{M_j\tlambda_j}{\tlambda_j-f(z)}}\right ).
\end{align*}
We can further extend this definition to the entire $\mathbb{C}$ by allowing $b(z)=\lim_{y\rightarrow 0}{z+iy}$ for $z\in \mathbb{R}\setminus \{0\}$ (the existence of such limits is shown in \cite{silverstein_choi}), and defining $f(0)=0$ for $\gamma\leq 1$ and $f(0)=f_0$ for $\gamma>1$, where $f_0$ is the unique negative solution to the equation
$$1+\frac{\gamma}{p}\sum_{j=1}^{\bar{p}}{\frac{M_j\tlambda_j}{f_0-\tlambda_j}}=0.$$

The following lemma then provides the convergence of integrals of $\hat{m}_p$ over different contours.
\begin{lemma}\label{lem:conv_mhat_moment_integral}
Suppose $\widetilde{\mathbb{X}}\sim \mathrm{GSp}(\left\{(\lambda_j,v_j)\right\}_{j=1}^{k^*};\Gamma_H;n,d)$ and $\hat{\Sigma}=\frac1n \widetilde{\mathbb{X}}^\top \widetilde{\mathbb{X}}$ has spectral decomposition $\hat{\Sigma}=\sum_{j=1}^{n \wedge p} \hat{\lambda}_j \hat{v}_j \hat{v}^\top_j$. The following holds under Assumption \ref{assn:spike_and_dimension}(a).
\begin{equation}
    \left |\frac{1}{2\pi i}\oint_{\partial\hat{S}_y^{-}(\hat{\mathcal{L}}_j)}{z^r\hat{m}_p(z)dz}-\frac{1}{2\pi i}\oint_{\partial S_y^{-}(\mathcal{L}_j)}{z^rm_p(z)dz} \right | \xrightarrow{a.s.} 0,
\end{equation}
for $r=0,1,2$, and $j=1,\ldots,k^*,\bar{p}$. The contours $\partial\hat{S}_y^{-}(\hat{\mathcal{L}}_j)$ and $\partial S_y^{-}(\mathcal{L}_j)$ are negatively oriented boundaries of the rectangles $\hat{S}_y(\hat{\mathcal{L}}_j)$ and $S_y(\mathcal{L}_j)$ of width $y>0$ (across imaginary axis) defined by,
\begin{align*}
    \hat{S}_y(\hat{\mathcal{L}}_j)&=\left \{z\in \mathbb{C}: \hat{a}_1(j)\leq Re(z) \leq \hat{a}_2(j), |Im(z)|\leq y \right \},\qquad \hat{\mathcal{L}}_j=\begin{cases}\{\hat{\lambda}_j\},\quad j=1,\ldots, k^*\\\{\hat{\lambda}_{k^*+1},\ldots,\hat{\lambda}_p\},\quad j=\bar{p}\end{cases}\\
    S_y(\mathcal{L}_j)&=\left \{z\in \mathbb{C}: a_1(j)\leq Re(z) \leq a_2(j), |Im(z)|\leq y \right \},\qquad \mathcal{L}_j=\begin{cases}\{\psi(\tlambda_j)\},\quad j=1,\ldots, k^*\\
    \Gamma_F,\quad j=\bar{p}
    \end{cases},
\end{align*}
where $\Gamma_F$ is the support of the LSD $F$ of the sample eigenvalues $\hat{\lambda}_1,\ldots,\hat{\lambda}_p$. The interval $[\hat{a}_1(j),\hat{a}_2(j)]$ is chosen such that $\hat{\mathcal{L}}_j\subset [\hat{a}_1(j),\hat{a}_2(j)]$ and $\hat{\mathcal{L}}_{j'}\cap [\hat{a}_1(j),\hat{a}_2(j)]=\emptyset$ for $j'\in\{1,\ldots,k^*,\bar{p}\}, j'\neq j$. Similarly, the interval $[a_1(j),a_2(j)]$ is chosen such that $\mathcal{L}_j\subset [a_1(j),a_2(j)]$ and $\mathcal{L}_{j'}\cap [a_1(j),a_2(j)]=\emptyset$ for $j'\in\{1,\ldots,k^*,\bar{p}\}, j'\neq j$.

\end{lemma}
\begin{remark}
The ESD of the sample covariance matrix converges weakly to the Mar\v{c}enko-Pastur distribution ~\cite{marcenkopastur,bai2012gsp}, and therefore $F$ can be considered as the Mar\v{c}enko-Pastur distribution as well. Since traditionally, Mar\v{c}enko-Pastur distribution is defined for the traditional spiked model where $\lambda_{k^*+1}=\ldots=\lambda_p=1$, for the generalized spiked model, we can call this distribution the ``Generalized Mar\v{c}enko-Pastur" distribution.
\end{remark}

\begin{remark}
The choices for intervals $[a_1(j),a_2(j)]$ exist due to the exact separation of eigenvalue clusters as described in Propositions 3.1 and 3.2 in \cite{bai2012gsp}.
\end{remark}

\begin{remark}
We have only derived results in this paper based on the assumption that all the generalized spikes are distant spikes, and larger than $\sup{\Gamma_H}$. However, other scenarios such as the existence of close spikes, or spikes smaller than $\sup{\Gamma_H}$ can also be accommodated by manipulating the definitions of the sets $\hat{\mathcal{L}}_j$ and $\mathcal{L}_j$. For example, if $\tlambda_j$-s are distant spikes for $j=1,\ldots,k_1$ and close spikes for $j=k_1+1,\ldots,k_2$, then the definitions of $\mathcal{L}_j$ and $\hat{\mathcal{L}}_j$ will remain the same as defined in Lemma~\ref{lem:conv_mhat_moment_integral} with $k^*=k_1$, which is equivalent to treating the close spikes as non-spikes.
\end{remark}

Thereafter, we note that,
\begin{align*}
    \frac{1}{2\pi i}\oint_{\partial\hat{S}_y^{-}(\hat{\mathcal{L}}_t)}{z^r\hat{m}_p(z)dz}=\begin{cases}
    \hat{\lambda}_t^rs_1^\top \hat{v}_t \hat{v}_t^\top s_2, \quad t=1,\ldots,k^*\\
    \sum_{j=k^*+1}^{p}{\hat{\lambda}_{j}^rs_1^\top \hat{v}_{j} \hat{v}_{j}^\top s_2}, \quad t=\bar{p}.
    \end{cases}
\end{align*}

Therefore, in order to complete the proof of the lemma, we need to find
\begin{align*}
    \frac{1}{2\pi i}\oint_{\partial S_y^{-}(\mathcal{L}_t)}{z^rm_p(z)dz}&=\frac{1}{2\pi i}\oint_{\partial S_y^{-}(\mathcal{L}_t)}{\sum_{j=1}^{\bar{p}}\frac{z^rs_1^\top \tV_j \tV_j^\top s_2}{\tlambda_jw(z)-z}dz}\\
 &=\sum_{j=1}^{\bar{p}}{g(\tlambda_j,\partial S_y^{-}(\mathcal{L}_t),r  )s_1^\top \tV_j \tV_j^\top s_2},
\end{align*}
where,
\begin{align*}
    g(\tlambda_j,\partial S_y^{-}(\mathcal{L}_t),r)&=\frac{1}{2\pi i}\oint_{\partial S_y^{-}(\mathcal{L}_t)}{\frac{z^r}{\tlambda_jw(z)-z}dz},\\
    &=\frac{1}{2\pi i}\oint_{\partial S_y^{-}(\mathcal{L}_t)}{\frac{f(z)z^{r-1}}{\tlambda_j-f(z)}dz},\qquad (\text{by replacing $w(z)=z/f(z)$}),\\
    &=\frac{1}{2\pi i}\oint_{\partial S_y^{-}(\mathcal{L}_t)}{\frac{f(z)\left \{f(z)\left (1-\frac{\gamma}{p}\sum_{q=1}^{\bar{p}}{\frac{M_q\tlambda_q}{\tlambda_q-f(z)}}\right ) \right \}^{r-1}}{\tlambda_j-f(z)}dz}\\
    &=\frac{1}{2\pi i}\oint_{f(\partial S_y^{-}(\mathcal{L}_t))}{\frac{f\left \{f\left (1-\frac{\gamma}{p}\sum_{q=1}^{\bar{p}}{\frac{M_q\tlambda_q}{\tlambda_q-f}}\right ) \right \}^{r-1}}{\tlambda_j-f}\left \{1-\frac{\gamma}{p}\sum_{q=1}^{\bar{p}}{M_q\left (\frac{\tlambda_q}{\tlambda_q-f}\right )^2} \right \}df}.   
\end{align*}
To compute the integral, we have employed a change of variable from $z$ to $f$, and the contour on which the integration is to be performed has been changed from $\partial S_y^{-}(\mathcal{L}_t)$ to $f(\partial S_y^{-}(\mathcal{L}_t))$. The justification behind using the $f(\partial S_y^{-}(\mathcal{L}_t))$ as an integration contour is given in \cite{mestre2008asymptotic} Appendix III.

For the rest of the proof, we refrain from mentioning $\mathcal{L}_t$ in $S_y(\mathcal{L}_t)$ where it is obvious for brevity of notations. We compute this integral using residue theorem case by case.
\begin{description}

\item[Case 1: $r=1$,]
\begin{align*}
    g(\tlambda_j,\partial S_y^{-},1)&=\frac{1}{2\pi i}\oint_{f(\partial S_y^{-})}{\frac{f}{\tlambda_j-f}\left \{1-\frac{\gamma}{p}\sum_{q=1}^{\bar{p}}{M_q\left (\frac{\tlambda_q}{\tlambda_q-f}\right )^2} \right \}df}.    
\end{align*}

The integrand here has poles at each $\tlambda_q$ for $q=1,\ldots,\bar{p}$. The multiplicity of each pole is two if $q\neq j$, and three if $q=j$. Therefore, the complex residues at these poles are,
\begin{align*}
    Res\left (g(\tlambda_j,\partial S_y^{-},1),\tlambda_q \right )=\begin{cases}
    \frac{\gamma\tlambda_j M_q}{p}\left ( \frac{\tlambda_q}{\tlambda_q-\tlambda_j}\right )^2,\quad q\neq j,\\
    \tlambda_j\left [1-\frac{\gamma}{p}\sum_{l\neq j}{M_l\left (\frac{\tlambda_l}{\tlambda_l-\tlambda_j}\right )^2} \right ],\quad q=j.
    \end{cases}
\end{align*}

\begin{description}
\item[Case 1a: For $t=1,\ldots, k^*$,]
the contour $f(\partial S_y^{-})$ only surrounds the eigenvalue $\tlambda_t$ and no other eigenvalue. Therefore, using the residue theorem,
\begin{align*}
    g(\tlambda_j,\partial S_y^{-},1)&=\begin{cases}
    \frac{\gamma\tlambda_j M_t}{p}\left ( \frac{\tlambda_t}{\tlambda_t-\tlambda_j}\right )^2,\quad t\neq j,\\
    \tlambda_j\left [1-\frac{\gamma}{p}\sum_{l\neq j}{M_l\left (\frac{\tlambda_l}{\tlambda_l-\tlambda_j}\right )^2} \right ],\quad t=j.
    \end{cases}
\end{align*}

The limit of the integral when $p\rightarrow \infty $,
\begin{align*}
    \lim_{p\rightarrow\infty}{g(\tlambda_j,\partial S_y^{-},1)}&=\begin{cases}
    0, \quad t\neq j\\
    \tlambda_j\psi '(\tlambda_j),\quad t=j.\end{cases}    
\end{align*}

\item[Case 1b: For $t=\bar{p}$,]
the contour  $f(\partial S_y^{-})$ surrounds all the non-spikes $\tlambda_{k^*+1},\ldots,\tlambda_{\bar{p}}$. Therefore, using the residue theorem,
\begin{align*}
    g(\tlambda_j,\partial S_y^{-},1)&=\begin{cases}
    \frac{\gamma\tlambda_j}{p}\sum_{q=k^*+1}^{\bar{p}}{M_q\left (\frac{\tlambda_q}{\tlambda_q-\tlambda_j}\right )^2},\quad j=1,\ldots,k^*\\
    \tlambda_j\left [1-\frac{\gamma}{p}\sum_{q=1}^{k^*}{M_q\left (\frac{\tlambda_q}{\tlambda_q-\tlambda_j}\right )^2} \right ],\quad j=k^*+1,\ldots,\bar{p}.\end{cases}
\end{align*}

The limit of the integral when $p\rightarrow \infty $,
\begin{align*}
    \lim_{p\rightarrow\infty}{g(\tlambda_j,\partial S_y^{-},1)}&=\begin{cases}
    \gamma\tlambda_j\int_{\Gamma_H}{\left (\frac{\lambda}{\tlambda_j-\lambda}\right )^2dH(\lambda)},\quad j=1,\ldots,k^*\\
    \tlambda_j,\quad j=k^*+1,\ldots,p.\end{cases}    
\end{align*}

\end{description}

\item[Case 2: $r=2$,]
\begin{align*}
    g(\tlambda_j,\partial S_y^{-},2)&=\frac{1}{2\pi i}\oint_{f(\partial S_y^{-})}{\frac{f^2}{\tlambda_j-f}\left (1-\frac{\gamma}{p}\sum_{q=1}^{\bar{p}}{\frac{M_q\tlambda_q}{\tlambda_q-f}}\right )\left \{1-\frac{\gamma}{p}\sum_{q=1}^{\bar{p}}{M_q\left (\frac{\tlambda_q}{\tlambda_q-f}\right )^2} \right \}df}.    
\end{align*}
The integrand has poles at each $\tlambda_q$ for $q=1,\ldots,\bar{p}$. The multiplicity of each pole is three if $q\neq j$, and four if $q=j$. Therefore, the complex residues at these poles are,
\begin{align*}
    &Res\left (g(\tlambda_j,\partial S_y^{-},2),\tlambda_q \right )\\
    &=\begin{cases}
    \frac{\gamma\tlambda_j M_q \tlambda_q^3}{p(\tlambda_j-\tlambda_q)^2}\left [1+\frac{\gamma\tlambda_j M_q }{p(\tlambda_j -\tlambda_q)}+\frac{\gamma}{p}\sum_{\substack{l=1\\l\neq q}}^{p}{\frac{M_l\tlambda_l}{\tlambda_q-\tlambda_l}} \right ],\quad q\neq j,\\
    \tlambda_j^2 \left [\frac{\gamma M_j}{p}\left (1+\frac{\gamma}{p}\sum_{\substack{l=1\\l\neq j}}^{p}{\frac{M_l\tlambda_l^3}{(\tlambda_j-\tlambda_l)^3}}\right )+\left (1+\frac{\gamma}{p}\sum_{\substack{l=1\\l\neq j}}^{p}{\frac{M_l\tlambda_l}{\tlambda_j-\tlambda_l}}\right )\left (1-\frac{\gamma}{p}\sum_{\substack{l=1\\l\neq j}}^{p}{\frac{M_l\tlambda_l^2}{(\tlambda_j-\tlambda_l)^2}}\right )\right ],\quad q=j.
    \end{cases}
\end{align*}

\begin{description}

\item[Case 2a: For $t=1,\ldots, k^*$,]
the contour $f(\partial S_y^{-})$ only surrounds the eigenvalue $\tlambda_t$ and no other eigenvalue. Therefore, using the residue theorem,
\begin{align*}
    &g(\tlambda_j,\partial S_y^{-},2)\\
    &=\begin{cases}
    \frac{\gamma\tlambda_j M_t \tlambda_t^3}{p(\tlambda_j-\tlambda_t)^2}\left [1+\frac{\gamma\tlambda_j M_t }{p(\tlambda_j -\tlambda_t)}+\frac{\gamma}{p}\sum_{\substack{l=1\\l\neq t}}^{p}{\frac{M_l\tlambda_l}{\tlambda_t-\tlambda_l}} \right ],\quad t\neq j,\\
    \tlambda_j^2 \left [\frac{\gamma M_j}{p}\left (1+\frac{\gamma}{p}\sum_{\substack{l=1\\l\neq j}}^{p}{\frac{M_l\tlambda_l^3}{(\tlambda_j-\tlambda_l)^3}}\right )+\left (1+\frac{\gamma}{p}\sum_{\substack{l=1\\l\neq j}}^{p}{\frac{M_l\tlambda_l}{\tlambda_j-\tlambda_l}}\right )\left (1-\frac{\gamma}{p}\sum_{\substack{l=1\\l\neq j}}^{p}{\frac{M_l\tlambda_l^2}{(\tlambda_j-\tlambda_l)^2}}\right )\right ],\quad t=j.
    \end{cases}
\end{align*}

The limit of the integral when $p\rightarrow \infty $,
\begin{align*}
    \lim_{p\rightarrow\infty}{g(\tlambda_j,\partial S_y^{-},2)}&=\begin{cases}
    0, \quad t\neq j\\
    \tlambda_j\psi (\tlambda_j) \psi '(\tlambda_j),\quad t=j.\end{cases}    
\end{align*}

\item[Case 2b: For $t=\bar{p}$,]
the contour  $f(\partial S_y^{-})$ surrounds all the non-spikes $\tlambda_{k^*+1},\ldots,\tlambda_{\bar{p}}$. Therefore, we first calculate the function $g(\tlambda_j,\partial S_y^{-},2)$ for $j=1,\ldots,k^*$ using the residue theorem,
\begin{align*}
   \ &  g(\tlambda_j,\partial S_y^{-},2)\\&=\sum_{q=k^*+1}^{p}{\frac{\gamma\tlambda_j M_q \tlambda_q^3}{p(\tlambda_j-\tlambda_q)^2}\left [1+\frac{\gamma\tlambda_j M_q }{p(\tlambda_j -\tlambda_q)}+\frac{\gamma}{p}\sum_{\substack{l=1\\l\neq q}}^{p}{\frac{M_l\tlambda_l}{\tlambda_q-\tlambda_l}} \right ]}\\
    &=\frac{\gamma\tlambda_j}{p}\sum_{q=k^*+1}^{p}{\frac{M_q\tlambda_q^3}{(\tlambda_j-\tlambda_q)^2}}+\frac{\gamma^2\tlambda_j^2}{p^2}\sum_{q=k^*+1}^{p}{\frac{M_q^2\tlambda_q^3}{(\tlambda_j-\tlambda_q)^3}}\\
    &+\frac{\gamma^2\tlambda_j}{p^2}\sum_{q=k^*+1}^{p}{\sum_{\substack{l=k^*+1\\l\neq q}}^{p}{\frac{M_q\tlambda_q^3 M_l \tlambda_l}{(\tlambda_j-\tlambda_q)^2(\tlambda_q-\tlambda_l)}}}+\frac{\gamma^2\tlambda_j}{p^2}\sum_{q=k^*+1}^{p}{\sum_{l=1}^{k^*}{\frac{M_q\tlambda_q^3 M_l \tlambda_l}{(\tlambda_j-\tlambda_q)^2(\tlambda_q-\tlambda_l)}}}.
\end{align*}
Now, for any pair of indices $q$ and $l$ such that $l\neq q$, one can show through some algebraic manipulation,
\begin{align*}
    \frac{M_q\tlambda_q^3 M_l \tlambda_l}{(\tlambda_j-\tlambda_q)^2(\tlambda_q-\tlambda_l)}+\frac{M_l\tlambda_l^3 M_q \tlambda_q}{(\tlambda_j-\tlambda_l)^2(\tlambda_l-\tlambda_q)}&=\frac{\tlambda_jM_q\tlambda_q^2 M_l \tlambda_l}{(\tlambda_j-\tlambda_q)^2(\tlambda_j-\tlambda_l)}+\frac{\tlambda_jM_l\tlambda_l^2 M_q \tlambda_q}{(\tlambda_j-\tlambda_l)^2(\tlambda_j-\tlambda_q)}
\end{align*}
Therefore,
\begin{align*}
     \ & g(\tlambda_j,\partial S_y^{-},2)\\
     &=\frac{\gamma\tlambda_j}{p}\sum_{q=k^*+1}^{p}{\frac{M_q\tlambda_q^3}{(\tlambda_j-\tlambda_q)^2}}+\frac{\gamma^2\tlambda_j^2}{p^2}\sum_{q=k^*+1}^{p}{\frac{M_q^2\tlambda_q^3}{(\tlambda_j-\tlambda_q)^3}}\\
    &+\frac{\gamma^2\tlambda_j^2}{p^2}\sum_{q=k^*+1}^{p}{\sum_{\substack{l=k^*+1\\l\neq q}}^{p}{\frac{M_q\tlambda_q^2 M_l \tlambda_l}{(\tlambda_j-\tlambda_q)^2(\tlambda_j-\tlambda_l)}}}+\frac{\gamma^2\tlambda_j}{p^2}\sum_{q=k^*+1}^{p}{\sum_{l=1}^{k^*}{\frac{M_q\tlambda_q^3 M_l \tlambda_l}{(\tlambda_j-\tlambda_q)^2(\tlambda_q-\tlambda_l)}}}\\
    &=\frac{\gamma\tlambda_j}{p}\sum_{q=k^*+1}^{p}{\frac{M_q\tlambda_q^3}{(\tlambda_j-\tlambda_q)^2}}+\frac{\gamma^2\tlambda_j^2}{p^2}\left (\sum_{q=k^*+1}^{p}{\frac{M_q\tlambda_q^2}{(\tlambda_j-\tlambda_q)^2}}\right )\left (\sum_{q=k^*+1}^{p}{\frac{M_q\tlambda_q}{\tlambda_j-\tlambda_q}}\right )+O(1/p).
\end{align*}

The limit of the integral when $p\rightarrow \infty $,
\begin{align*}
    \lim_{p\rightarrow\infty}{g(\tlambda_j,\partial S_y^{-},2)}&=\gamma\tlambda_j\int_{\Gamma_H}{ \frac{\lambda^3}{(\tlambda_j-\lambda)^2}dH(\lambda)} \\
    &+\gamma^2\tlambda_j^2\left [\int_{\Gamma_H}{ \frac{\lambda}{\tlambda_j-\lambda}dH(\lambda)}\right ]\left [\int_{\Gamma_H}{ \left (\frac{\lambda}{\tlambda_j-\lambda}\right )^2 dH(\lambda)}\right ].   
\end{align*}

Next, we calculate the function $g(\tlambda_j,\partial S_y^{-},2)$ for $j=k^*+1,\ldots,p$ using the residue theorem,
\begin{align*}
   \ &  g(\tlambda_j,\partial S_y^{-},2)\\
   &=\sum_{\substack{q=k^*+1\\q\neq j}}^{p}{\frac{\gamma\tlambda_j M_q \tlambda_q^3}{p(\tlambda_j-\tlambda_q)^2}\left [1+\frac{\gamma\tlambda_j M_q }{p(\tlambda_j -\tlambda_q)}+\frac{\gamma}{p}\sum_{\substack{l=1\\l\neq q}}^{p}{\frac{M_l\tlambda_l}{\tlambda_q-\tlambda_l}} \right ]}\\
    &+\tlambda_j^2 \left [\frac{\gamma M_j}{p}\left (1+\frac{\gamma}{p}\sum_{\substack{q=1\\q\neq j}}^{p}{\frac{M_q\tlambda_q^3}{(\tlambda_j-\tlambda_q)^3}}\right )+\left (1+\frac{\gamma}{p}\sum_{\substack{q=1\\q\neq j}}^{p}{\frac{M_q\tlambda_q}{\tlambda_j-\tlambda_q}}\right )\left (1-\frac{\gamma}{p}\sum_{\substack{q=1\\q\neq j}}^{p}{\frac{M_q\tlambda_q^2}{(\tlambda_j-\tlambda_q)^2}}\right )\right ]\\
    &=\sum_{\substack{q=k^*+1\\q\neq j}}^{p}{\frac{\gamma\tlambda_j M_q \tlambda_q^3}{p(\tlambda_j-\tlambda_q)^2}}+\sum_{\substack{q=k^*+1\\q\neq j}}^{p}{\frac{\gamma^2\tlambda_j^2 M_q^2 \tlambda_q^3}{p^2(\tlambda_j-\tlambda_q)^3}}+\frac{\gamma^2\tlambda_j}{p^2}\sum_{\substack{q=k^*+1\\q\neq j}}^{p}{\sum_{\substack{l=1\\l\neq q}}^{p}{\frac{M_q\tlambda_q^3 M_l \tlambda_l}{(\tlambda_j-\tlambda_q)^2(\tlambda_q-\tlambda_l)}}}\\
    &+\frac{\gamma}{p}M_j\tlambda_j^2+\frac{\gamma^2M_j\tlambda_j^2}{p^2}\sum_{\substack{q=1\\q\neq j}}^{p}{\frac{M_q\tlambda_q^3}{(\tlambda_j-\tlambda_q)^3}}+\tlambda_j^2+\frac{\gamma\tlambda_j^2}{p}\sum_{\substack{q=1\\q\neq j}}^{p}{\frac{M_q\tlambda_q}{\tlambda_j-\tlambda_q}}-\frac{\gamma\tlambda_j^2}{p}\sum_{\substack{q=1\\q\neq j}}^{p}{\frac{M_q\tlambda_q^2}{(\tlambda_j-\tlambda_q)^2}}\\
    &-\tlambda_j^2\left (\frac{\gamma}{p}\sum_{\substack{q=1\\q\neq j}}^{p}{\frac{M_q\tlambda_q}{\tlambda_j-\tlambda_q}}\right )\left (\frac{\gamma}{p}\sum_{\substack{q=1\\q\neq j}}^{p}{\frac{M_q\tlambda_q^2}{(\tlambda_j-\tlambda_q)^2}}\right )
\end{align*}
Terms $1,7,8$ yield,
\begin{align*}
    \sum_{\substack{q=k^*+1\\q\neq j}}^{p}{\frac{\gamma\tlambda_j M_q \tlambda_q^3}{p(\tlambda_j-\tlambda_q)^2}}+\frac{\gamma\tlambda_j^2}{p}\sum_{\substack{q=1\\q\neq j}}^{p}{\frac{M_q\tlambda_q}{\tlambda_j-\tlambda_q}}-\frac{\gamma\tlambda_j^2}{p}\sum_{\substack{q=1\\q\neq j}}^{p}{\frac{M_q\tlambda_q^2}{(\tlambda_j-\tlambda_q)^2}}=\frac{\gamma \tlambda_j}{p}\sum_{\substack{q=1\\q\neq j}}^{p}{M_q\tlambda_q}+{O}(1/p).
\end{align*}
Terms $2$ and $3$ yield,
\begin{align*}
    &\sum_{\substack{q=k^*+1\\q\neq j}}^{p}{\frac{\gamma^2\tlambda_j^2 M_q^2 \tlambda_q^3}{p^2(\tlambda_j-\tlambda_q)^3}}+\frac{\gamma^2\tlambda_j}{p^2}\sum_{\substack{q=k^*+1\\q\neq j}}^{p}{\sum_{\substack{l=1\\l\neq q}}^{p}{\frac{M_q\tlambda_q^3 M_l \tlambda_l}{(\tlambda_j-\tlambda_q)^2(\tlambda_q-\tlambda_l)}}}\\
    &=\sum_{\substack{q=1\\q\neq j}}^{p}{\frac{\gamma^2\tlambda_j^2 M_q^2 \tlambda_q^3}{p^2(\tlambda_j-\tlambda_q)^3}}+\frac{\gamma^2\tlambda_j}{p^2}\sum_{\substack{q=k^*+1\\q\neq j}}^{p}{\sum_{\substack{l=1\\l\neq q\\l\neq j}}^{p}{\frac{M_q\tlambda_q^3 M_l \tlambda_l}{(\tlambda_j-\tlambda_q)^2(\tlambda_q-\tlambda_l)}}}\\&-\frac{\gamma^2M_j\tlambda_j^2}{p^2}\sum_{\substack{q=k^*+1\\q\neq j}}^{p}{\frac{M_q \tlambda_q^3}{(\tlambda_j-\tlambda_q)^3}}+{O}(1/p)\\
    &=\tlambda_j^2\left (\frac{\gamma}{p}\sum_{\substack{q=1\\q\neq j}}^{p}{\frac{M_q\tlambda_q}{\tlambda_j-\tlambda_q}}\right )\left (\frac{\gamma}{p}\sum_{\substack{q=1\\q\neq j}}^{p}{\frac{M_q\tlambda_q^2}{(\tlambda_j-\tlambda_q)^2}}\right )-\frac{\gamma^2M_j\tlambda_j^2}{p^2}\sum_{\substack{q=k^*+1\\q\neq j}}^{p}{\frac{M_q \tlambda_q^3}{(\tlambda_j-\tlambda_q)^3}}+{O}(1/p),
\end{align*}
using similar algebraic manipulations as in Case 1. Therefore,
\begin{align*}
    g(\tlambda_j,\partial S_y^{-},2)&=\tlambda_j^2\left (1+\frac{\gamma}{p} M_j\right )+\frac{\gamma \tlambda_j}{p}\sum_{\substack{q=1\\q\neq j}}^{p}{M_q\tlambda_q}+{O}(1/p)\\
    &=\tlambda_j^2+\frac{\gamma \tlambda_j}{p}\sum_{q=1}^{p}{M_q\tlambda_q}+{O}(1/p). 
\end{align*}
The limit of the integral when $p\rightarrow \infty $,
\begin{align*}
    \lim_{p\rightarrow\infty}{g(\tlambda_j,\partial S_y^{-},2)}&=\tlambda_j^2+\gamma\tlambda_j\int_{\Gamma_H}{\lambda dH(\lambda)}.   
\end{align*}
\end{description}
And hence we have derived all the asymptotic quantities mentioned in Lemma~\ref{lem:spikes_non_spikes}. The final step of replacing the quantities $g(\tlambda_j,\partial S_y^{-},r)$ with their corresponding limits follows from the dominated convergence theorem by noting that $s_1, s_2$ have uniformly bounded norms, and thus $\sum_{j=1}^{\bar{p}}{s_1^\top \tilde{V}_j\tilde{V}_j^\top s_2}$ is bounded.
\end{description}
\end{proof}

\begin{proof}[Proof of Lemma~\ref{lem:conv_mhat_moment_integral}]
First, we show that $\hat{a}_1(j),\hat{a}_2(j),a_1(j),a_2(j)$ can be chosen to satisfy $\hat{a}_1(j)\rightarrow a_1(j)$ and $\hat{a}_2(j)\rightarrow a_2(j)$ for $j=1,\ldots,k^*,\bar{p}$.

For $j=1,\ldots,k^*$, the eigenvalues $\tlambda_j$ are distant spikes. Thus, according to \cite[Theorem 4.1]{bai2012gsp}, $\hat{\lambda}_j\xrightarrow{a.s.}\psi(\tlambda_j)$. Further, $\psi(\tlambda_j)$ is bounded away from $\Gamma_F$, the support of the sample LSD, as well as from any other $\psi(\tlambda_{j'})$ for $j\in\{1,\ldots,k^*\},j\neq j'$. Therefore, there exists an interval $I$ such that for large enough $p$ and $n$, $\{\hat{\lambda}_j,\psi(\tlambda_j)\}\subset I$ and $I\cap \mathcal{L}_{j'}=\emptyset,I\cap \hat{\mathcal{L}}_{j'}=\emptyset$ for $j'\in \{1,\ldots,k^*,\bar{p}\},j'\neq j$, with probability 1. We can then choose the intervals $[\hat{a}_1(j),\hat{a}_2(j)],[a_1(j),a_2(j)]\subset I$ to satisfy the claim above. Further, we can choose the interval boundaries to also be bounded away from $\tilde{\Gamma}_F=\Gamma_F\cup\{\psi(\lambda_1),\ldots,\psi(\lambda_{k^*})\}$.

For $j=\bar{p}$, the ESD of the sample eigenvalues corresponding to non-spikes converge to the sample LSD (or the generalized Mar\v{c}enko-Pastur distribution) $F$. Thus, there exists an interval $I$ such that for large enough $p$ and $n$, $\Gamma_H\subset I, \{\hat{\lambda}_{k^*+1},\ldots,\hat{\lambda}_p\}\subset I$ and $I\cap \mathcal{L}_{j}=\emptyset,I\cap \hat{\mathcal{L}}_{j}=\emptyset$ for $j\in \{1,\ldots,k^*\}$, with probability 1. We can then choose the intervals $[\hat{a}_1(j),\hat{a}_2(j)],[a_1(j),a_2(j)]\subset I$ to satisfy the claim above. Further, we can choose the interval boundaries to also be bounded away from $\tilde{\Gamma}_F$.

Then,
\begin{align*}
    &\left |\frac{1}{2\pi i}\oint_{\partial\hat{S}_y^{-}(\hat{\mathcal{L}}_j)}{z^r\hat{m}_p(z)dz}-\frac{1}{2\pi i}\oint_{\partial S_y^{-}(\mathcal{L}_j)}{z^rm_p(z)dz} \right |\\
    &\leq \left |\frac{1}{2\pi i}\oint_{\partial\hat{S}_y^{-}(\hat{\mathcal{L}}_j)}{z^r\hat{m}_p(z)dz}-\frac{1}{2\pi i}\oint_{\partial \hat{S}_y^{-}(\hat{\mathcal{L}}_j)}{z^r\hat{m}_p(z)dz} \right |+\left |\frac{1}{2\pi i}\oint_{\partial S_y^{-}(\mathcal{L}_j)}{\left (z^r\hat{m}_p(z)-z^rm_p(z)\right )dz} \right |\\
    &\leq \frac{1}{2\pi}\left \{\sup_{z\in\partial\hat{S}_y^{-}(\hat{\mathcal{L}}_j)\cup \partial S_y^{+}(\mathcal{L}_j)}{|z^r\hat{m}_p(z)|} \right \}(|\hat{a}_1(j)-a_1(j)|+|\hat{a}_2(j)-a_2(j)|)\\
    &+\frac{1}{2\pi} \oint_{\partial S_y^{-}(\mathcal{L}_j)}{|z^r||\hat{m}_p(z)-m_p(z)||dz|}.
\end{align*}
For the first term, using Cauchy-Schwartz inequality, we have,
\begin{align*}
    |z^r\hat{m}_p(z)|\leq \frac{\|z^r \| \|s_1\| \|s_2\|}{d\left (z,\tilde{\Gamma}_F \right )},
\end{align*}
where $d(z,S)=\inf_{y\in S}{|z-y|}$ for $S\subset \mathbb{C}$. Since $a_1(z),a_2(z)$ are chosen such that $d\left (a_1(z),\tilde{\Gamma}_F\right )>0,d\left (a_1(z),\tilde{\Gamma}_F\right )>0$, $\exists M,P$ large enough such that for $p>P$, with probability one,
\begin{align*}
    \sup_{z\in\partial\hat{S}_y^{-}(\hat{\mathcal{L}}_j)\cup \partial S_y^{+}(\mathcal{L}_j)}{|z^r\hat{m}_p(z)|}<M.
\end{align*}
Therefore, the convergence of the first term on the right-hand side above to zero is complete as $\hat{a}_1(z)\rightarrow a_1(z)$ and $\hat{a}_2(z)\rightarrow a_2(z)$.

For the second term, as $\hat{m}_p(z)$ and $m_p(z)$ are holomorphic on the compact set $\partial S_y^{-}(\mathcal{L}_j)$, we have $\sup_{z\in \partial S_y^{-}(\mathcal{L}_j)}{|z^r|}$ bounded above and
$\sup_{z\in \partial S_y^{-}(\mathcal{L}_j)}{|\hat{m}_p(z)-m_p(z)|}$ bounded above with probability one. Moreover, from Result~\ref{res:mestre_conv_bilinear}, $|\hat{m}_p(z)-m_p(z)|\xrightarrow{a.s.}0$ point-wise for $z\in\mathbb{C}\setminus \mathbb{R}$. Therefore, using Lebesgue's dominated convergence theorem, the second term on the right-hand side above converges to zero almost surely, and that completes the proof.
\end{proof}

\begin{lemma}\label{lem:lmnt}
Suppose $\widetilde{\mathbb{X}}\sim \mathrm{GSp}(\left\{(\lambda_j,v_j)\right\}_{j=1}^{k^*};\Gamma_H;n,p)$ and $\hat{\Sigma}=\frac1n \widetilde{\mathbb{X}}^\top \widetilde{\mathbb{X}}$ has spectral decomposition $\hat{\Sigma}=\sum_{j=1}^{n \wedge p} \hat{\lambda}_j \hat{v}_j \hat{v}^\top_j$. Under Assumption $2.6$(a), there exists $m_1, m_2>0$ depending on $\Gamma_H$ such that,
\begin{align}\label{eq:lambda_hat_moment}
     \frac{1}{p}\sum_{j=k+1}^{n \wedge p} \hat{\lambda}_j \rightarrow m_1, \\
     \frac{1}{p}\sum_{j=k+1}^{n \wedge p} \hat{\lambda}^2_j \rightarrow m_2.
\end{align}
\end{lemma}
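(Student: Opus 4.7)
I would reduce the claim to a moment computation for the full ESD of $\hat{\Sigma}=\tfrac1n\widetilde{\mathbb{X}}^\top\widetilde{\mathbb{X}}$ and then evaluate those moments directly. The first step is to write
\begin{align*}
    \frac{1}{p}\sum_{j=k+1}^{n\wedge p}\hat{\lambda}_j^{r}
    = \frac{1}{p}\mathrm{tr}(\hat{\Sigma}^{r}) - \frac{1}{p}\sum_{j=1}^{k}\hat{\lambda}_j^{r},\qquad r=1,2.
\end{align*}
Since $k$ is fixed, the subtracted piece contains only finitely many terms, each of which is almost surely $O(1)$: the generalized spikes $\hat{\lambda}_1,\ldots,\hat{\lambda}_{k^{*}}$ converge to $\psi(\lambda_j)<\infty$ by Theorem~4.1 of \cite{bai2012gsp}, and any additional indices $k^{*}+1,\ldots,k$ lie in a neighborhood of the bounded set $\Gamma_F$ by Theorem~4.2 of \cite{bai2012gsp} together with Assumption 2.6(a)-(b). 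Dividing by $p$ therefore produces a $o(1)$ contribution.

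The second step is to evaluate $p^{-1}\mathrm{tr}(\hat{\Sigma}^r)$. Writing $\widetilde{\mathbb{X}}=Z\Lambda_p^{1/2}V_p$ with $V_p$ orthogonal, the trace is invariant under $V_p$, so
\begin{align*}
    \mathrm{tr}(\hat{\Sigma}) &= \tfrac{1}{n}\mathrm{tr}(\Lambda_p Z^\top Z),\qquad \mathrm{tr}(\hat{\Sigma}^2) = \tfrac{1}{n^2}\mathrm{tr}\big((\Lambda_p Z^\top Z)^2\big).
\end{align*}
Using the i.i.d. mean-zero, unit-variance, sub-Gaussian entries of $Z$ (with common fourth moment $\mu_4<\infty$) and direct expansion,
\begin{align*}
    \mathbb{E}[\mathrm{tr}(\Lambda_p Z^\top Z)] &= n\,\mathrm{tr}(\Lambda_p),\\
    \mathbb{E}[\mathrm{tr}((\Lambda_p Z^\top Z)^2)] &= n(\mathrm{tr}\Lambda_p)^2 + n(n-2+\mu_4)\mathrm{tr}(\Lambda_p^2).
\end{align*}
Dividing by $p$ and $n^2p$ respectively and letting $n,p\to\infty$ with $p/n\to\gamma$, together with the fact (from Definition~\ref{defn:generalized_spike_model} and $\|\Sigma_p\|=O(1)$) that $p^{-1}\mathrm{tr}(\Lambda_p^r)\to\int\lambda^r\,dH(\lambda)$ for $r=1,2$, one obtains
\begin{align*}
    \frac{1}{p}\mathbb{E}[\mathrm{tr}(\hat{\Sigma})] \to m_1 := \int\lambda\,dH(\lambda),\qquad \frac{1}{p}\mathbb{E}[\mathrm{tr}(\hat{\Sigma}^2)] \to m_2 := \gamma m_1^2 + \int\lambda^2\,dH(\lambda),
\end{align*}
both of which are strictly positive because $H$ is a nondegenerate probability measure on $(0,\infty)$ under the generalized spiked model.

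The third step is concentration. For $r=1$, a one-line calculation yields $\mathrm{Var}(p^{-1}\mathrm{tr}\hat{\Sigma}) = \tfrac{\mu_4-1}{n p^2}\mathrm{tr}(\Lambda_p^2)=O(1/p)\to 0$. For $r=2$, expanding $\mathrm{tr}((\Lambda_p Z^\top Z)^2)$ as a polynomial of degree at most eight in the entries of $Z$ and using sub-Gaussian moment bounds together with $\|\Sigma_p\|=O(1)$ yields $\mathrm{Var}(p^{-1}\mathrm{tr}\hat{\Sigma}^2)=O(1/n)$. Chebyshev gives convergence in probability, which suffices for every subsequent invocation of this lemma in the paper; if almost sure convergence is needed, one can either appeal to the a.s. convergence of the ESD of $\hat{\Sigma}$ to the generalized Mar\v{c}enko-Pastur distribution (together with a.s. boundedness of $\hat{\lambda}_1$ to upgrade weak convergence to moment convergence) or run a Borel-Cantelli argument along a polynomial subsequence using the higher-moment bounds afforded by sub-Gaussianity.

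\textbf{Main obstacle.} The conceptually clean parts are the truncation of the top-$k$ spikes and the mean computation, both of which are essentially bookkeeping. The one step that requires genuine care is controlling $\mathrm{Var}(\mathrm{tr}(\hat{\Sigma}^2))$ outside the Gaussian regime, since Wick/Isserlis is unavailable and one has to track which index patterns in the degree-eight expansion survive in expectation. The cleanest route is to avoid this altogether by invoking the generalized Mar\v{c}enko-Pastur convergence of the ESD (for example as stated in Lemma~\ref{lem:conv_mhat_moment_integral}) combined with a.s. bulk-edge bounds on $\hat{\lambda}_1$, which upgrades weak convergence to convergence of the first two moments via dominated convergence.
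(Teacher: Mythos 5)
Your proposal is correct, but it takes a different route from the paper: the paper offers no argument at all for this lemma, simply citing \cite{couillet2011random} and \cite{bai2010spectral}, where the result follows from almost sure weak convergence of the ESD of $\hat{\Sigma}$ to the generalized Mar\v{c}enko--Pastur law together with a.s.\ boundedness of $\hat{\lambda}_1$, which upgrades weak convergence to convergence of the first two moments. You instead give a self-contained moment computation: peel off the finitely many spiked eigenvalues (an $o(1)$ contribution after dividing by $p$), reduce $p^{-1}\mathrm{tr}(\hat\Sigma^r)$ to traces of $\Lambda_p Z^\top Z$, and verify the mean and variance directly. Your formulas check out — $\mathbb{E}[\mathrm{tr}((\Lambda_p Z^\top Z)^2)]=n(\mathrm{tr}\Lambda_p)^2+n(n-2+\mu_4)\mathrm{tr}(\Lambda_p^2)$ is right, and the resulting limits $m_1=\int\lambda\,dH$ and $m_2=\gamma m_1^2+\int\lambda^2\,dH$ agree with the generalized Mar\v{c}enko--Pastur moments and with the non-spike constants $\phi_{2j}=\lambda_j^2+\gamma\lambda_j\int\lambda\,dH$ used elsewhere in the paper (e.g.\ $1+\gamma$ in the classical case). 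What your route buys is transparency and explicit constants at the cost of a somewhat tedious degree-eight variance bound for $r=2$; what the citation route buys is that a.s.\ convergence comes for free, whereas Chebyshev only gives convergence in probability — and the paper does invoke this lemma with a.s.\ convergence (e.g.\ in \eqref{eq:trx}), so you would need to carry out the Borel--Cantelli upgrade you sketch, or fall back on the ESD argument you mention at the end. Two minor points: $H$ need not be nondegenerate (the classical spiked model has $H=\delta_1$), but $m_1,m_2>0$ still holds since $H$ is supported away from zero by positive definiteness and boundedness of $\Sigma_p^{-1}$ in the regimes considered; and your $O(1/p)$ bound on $\mathrm{Var}(p^{-1}\mathrm{tr}\hat\Sigma)$ is loose (it is actually $O(1/(np))$), though of course still sufficient.
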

\begin{proof}
See e.g. \cite{couillet2011random}, \cite{bai2010spectral}.
\end{proof}

\end{document}